\documentclass[11pt]{amsart}
\usepackage{amsmath}
\usepackage{amscd}
\usepackage{amssymb}
\usepackage{amsbsy}
\usepackage{amsfonts}
\usepackage{color}
\usepackage{latexsym}
\usepackage{graphics}
\usepackage{graphicx}
\usepackage{amsmath,amscd,latexsym}
\usepackage{array}
\usepackage{enumerate}
\usepackage{mathrsfs}
\usepackage[
  textheight=20cm,
  textwidth=6.0in,
  centering
]{geometry}

\theoremstyle{plain}
\newtheorem{theorem}{Theorem}[section]
\newtheorem{proposition}[theorem]{Proposition}
\newtheorem{lemma}[theorem]{Lemma}
\newtheorem{corollary}[theorem]{Corollary}
\newtheorem{main theorem}[theorem]{Main Theorem}

\newtheorem{question}[theorem]{Question}

\theoremstyle{definition}   
\newtheorem{remark}[theorem]{Remark}
\newtheorem{definition}[theorem]{Definition}

\newlength\savewidth

\newcommand{\svert}{\,|\,}

\newcommand{\rank}{{\rm rank}}

\newcommand{\Lab}{{\bf Lab}}


\begin{document}
\title[Maximal ranks of free abelian subgroups of generalized cactus groups associated with \\ finite Coxeter systems]
{Maximal ranks of free abelian subgroups of generalized cactus groups associated with \\ finite Coxeter systems}
\author{Jan Kim}
\address{Department of Mathematical Sciences \\
    KAIST, 291 Daehak-ro, Yuseong-gu, Daejeon, 34141, South Korea}
\email{jankim@kaist.ac.kr}	

\author{Junseok Kim}
\address{Department of Mathematics \\
        Technion - Israel Institute of Technology \\
        Technion City, Haifa, Israel, 3200003}
\email{jsk8818@campus.technion.ac.il}		

\subjclass[2020]{Primary 20F65; Secondary 20F55, 20F67, 20F69.}
\keywords{generalized cactus groups, Coxeter groups, free abelian subgroups, hyperbolic groups, CAT(0) cube complexes, quasi-isometry, right-angled Artin groups}

\begin{abstract}
We determine the maximum ranks of free abelian subgroups of generalized cactus groups associated with finite Coxeter systems.
To obtain the upper bounds, we use the cubical flat torus theorem of Wise and Woodhouse. We then verify the sharpness of these upper bounds by constructing free abelian subgroups of the corresponding ranks.
Our result shows that the maximal ranks of free abelian subgroups of ordinary cactus groups can be larger than conjectured by Genevois. 
As an application, we show that these groups are not quasi-isometric to any right-angled Artin group unless they are virtually free abelian.
\end{abstract}

\maketitle
\setcounter{tocdepth}{1}
\tableofcontents

\section{Introduction}
\label{sec:introduction}		

Let $n \ge 2$ be an integer. 
The {\em cactus group} $J_n$ is the group generated by
\[
\{s_{p,q} \svert 1 \le p < q \le n\},
\]
with the following relations:
{\setlength{\leftmargini}{2em}
\begin{itemize}
\item $s_{p,q}^2=1$ for all $1 \le p < q \le n$;
\item $s_{p,q}s_{m,r} = s_{m,r}s_{p,q}$ whenever $[p,q] \cap [m,r] = \emptyset$;
\item $s_{p,q}s_{m,r} = s_{p+q-r, p+q-m}s_{p,q}$ whenever $[m,r] \subset [p,q]$.
\end{itemize}
}
Here, $[m,r]$ and $[p,q]$ denote the sets $\{m,m+1,\dots,r\}$ and $\{p,p+1,\dots,q\}$, respectively.
Cactus groups were first introduced in~\cite{Henriques_Kamnitzer} in connection with crystals and coboundary categories. They are closely related to the topology of real moduli spaces. For each $n\ge 2$, the cactus group $J_n$ admits a natural epimorphism onto the symmetric group $S_n$, whose kernel, denoted by $PJ_n$ and
called the {\em pure cactus group}, is isomorphic to the fundamental
group of the real moduli space $\overline{M}_{0,n+1}(\mathbb{R})$~\cite[Theorem~9]{Henriques_Kamnitzer}. 

From the point of view of geometric group theory, Genevois~\cite{Genevois} studied cactus groups using their median geometry.
In particular, Genevois~\cite{Genevois} proved that the canonical Cayley graph of $J_n$ is a median graph, and hence $J_n$ acts properly and cocompactly on a $\mathrm{CAT}(0)$ cube complex. In fact, the median property of the canonical Cayley graphs of cactus groups can be deduced from earlier work in~\cite{Davis_Januszkiewicz_Scott,Scott}, while Genevois provided a direct proof.
Genevois further investigated several notions of hyperbolicity for cactus groups and proved that $J_n$ is virtually cocompact special.
In this setting, by the flat torus theorem~\cite[Theorem~II.7.1]{Bridson_Haefliger} and the flat closing theorem for compact special cube complexes~\cite[Corollary~2]{Caprace_Haglund}, the maximal possible rank of its free abelian subgroups is equals the maximum dimension of flats in this cube complex.
This gives a geometric motivation for determining that rank. In this direction,
Genevois~\cite[Conjecture~8.6]{Genevois} conjectured that $J_n$ contains
a free abelian subgroup of rank $k$ if and only if $k\leq \lfloor n/ 3 \rfloor$.
We prove that $J_n$ contains a free abelian subgroup of rank $k$ if and
only if
\[
k\leq \left\lfloor \frac{n-1}{2}\right\rfloor;
\]
see Theorem~\ref{thm:maximal_rank} below.
Thus, Genevois's conjecture does not hold in general.

\bigskip 

Our main result extends this computation to generalized
cactus groups associated with finite Coxeter systems, giving the exact
maximal rank of their free abelian subgroups. 
We begin with the definition of generalized cactus groups.

\begin{definition}[Generalized cactus groups and pure subgroups]
\label{def:pcw}
Let $(W,S)$ be a Coxeter system. For $I\subseteq S$, let
$W_I=\langle I\rangle$, and let $\mathcal I$ be the collection of
nonempty subsets $I\subseteq S$ such that $W_I$ is finite and the
Coxeter subdiagram spanned by $I$ is connected. For $I\in\mathcal I$,
let $\omega_I$ denote the longest element of $W_I$.

Then, the {\em generalized cactus group} $C_W$ is the group with generating set $\{\sigma_I \svert I\in\mathcal I\}$ and relations
\begin{enumerate}[\rm (i)]
\item $\sigma_I^2=1$ for all $I\in\mathcal I$;
\item $[\sigma_I,\sigma_J]=1$ for all $I,J \in\mathcal I$ such that $W_{I\cup J}=W_I\times W_J$;
\item $\sigma_I\sigma_J=\sigma_J\sigma_{\omega_J I\omega_J^{-1}}$ for all $I,J\in\mathcal I$ such that $I\subset J$,
\end{enumerate}

Let $g_W:C_W\to W$ be the homomorphism defined by
$g_W(\sigma_I)=\omega_I$. The \emph{generalized pure cactus group} is
the kernel
\[
PC_W:=\ker(g_W).
\]
If $W$ is finite, then $PC_W$ has finite index in $C_W$.
\end{definition}

Generalized cactus groups and related groups have been studied in a variety of contexts, including representation theory and geometric group theory.
Davis, Januszkiewicz, and Scott~\cite{Davis_Januszkiewicz_Scott_2} studied groups related to generalized cactus groups in connection with blow-ups of reflection tilings.
For Weyl groups, Losev~\cite{Losev} investigated generalized cactus groups in connection with Kazhdan--Lusztig cells and their actions on the corresponding Weyl groups. Bonnaf\'e~\cite{Bonnafe} extended this construction to arbitrary Coxeter systems.
Bellingeri, Godelle, and Paris~\cite{Bellingeri_Godelle_Paris} realized generalized cactus groups as examples of trickle groups, while Genevois~\cite{Genevois_3} realized them as examples of quandle products.
Genevois's results imply that the canonical Cayley graphs of generalized cactus groups are median.
When the defining Coxeter systems are finite, the cube-completions of these Cayley graphs are $\mathrm{CAT}(0)$ cube complexes on which the corresponding generalized cactus groups act geometrically.

We can now state our main theorem.

\begin{theorem}\label{thm:maximal_rank}
Let $(W,S)$ be a finite Coxeter system whose Coxeter diagram has
connected components $S_1,\dots,S_k$ and let $n_i=|S_i|$ for each $i$. Then the generalized cactus group $C_W$ contains a free abelian subgroup of rank $r$ if and only if
\[
r\leq \sum_{i=1}^k \left\lfloor \frac{n_i}{2} \right\rfloor.
\]
\end{theorem}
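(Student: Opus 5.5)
First I reduce to the irreducible case. If $S=S_1\sqcup\dots\sqcup S_k$, then every $I\in\mathcal I$ is connected, so it lies in a single $S_i$. Relation (ii) makes generators from different components commute, and relations (i) and (iii) only involve subsets of one component. Hence $C_W\cong C_{W_{S_1}}\times\dots\times C_{W_{S_k}}$.

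Next I compare the maximal rank with the maximal flat dimension. Each factor acts geometrically on the CAT(0) cube complex obtained by cube-completing its median Cayley graph. For a group acting geometrically on a CAT(0) space, the maximal rank of a free abelian subgroup is controlled by flats. Concretely, I use the cubical flat torus theorem of Wise--Woodhouse for the upper bound, and exhibit explicit subgroups for the lower bound. It then suffices to prove that for irreducible finite $(W,S)$ with $|S|=n$, the maximal rank of a free abelian subgroup of $C_W$ is exactly $\lfloor n/2\rfloor$. Additivity over products follows: a free abelian subgroup of a product of groups acting on CAT(0) cube complexes gives a flat in the product cube complex. The maximal flat dimension there is additive, because hyperplanes of a product split as the disjoint union of the factors' hyperplanes.

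\textbf{Lower bound.} For each irreducible finite type I will construct $\lfloor n/2\rfloor$ commuting infinite-order elements. The natural building block is a pair $J\subset K$, with $K$ connected of rank two (or a small connected set) and $J$ a proper connected subset. An element such as $\sigma_K\sigma_J$ has infinite order; compare the infinite dihedral group generated by two non-commuting involutions. For the type $A_{n-1}$ diagram (vertices $1,\dots,n-1$), I take disjoint blocks $\{2i-1,2i\}$ and the elements $x_i=\sigma_{\{2i-1,2i\}}\sigma_{\{2i-1\}}$. These pairwise commute by relation (ii) whenever the blocks are non-adjacent in the diagram. Adjacent blocks do not commute directly, so I must instead use nested intervals. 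I would take $x_i=\sigma_{[1,2i]}\sigma_{[1,2i-1]}$, or products of longest elements of nested intervals, and then use relation (iii) to check commutation. This mirrors how $\lfloor (n-1)/2\rfloor$ beats Genevois's $\lfloor n/3\rfloor$. Infinite order and independence would be checked by projecting to suitable quotients, for example killing generators to reach an infinite dihedral group or $\mathbb Z/2*\mathbb Z/2$ factors, or by using hyperplane (wall) counting in the median graph. Types $B,D,E,F,H,I$ are handled the same way using chains of connected subsets along the diagram; $I_2(m)$ needs only one element of infinite order.

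\textbf{Upper bound (main obstacle).} By the cubical flat torus theorem, a rank-$r$ free abelian subgroup stabilizes a convex subcomplex isometric to a product of $r$ cubical quasilines. This yields $r$ pairwise transverse families of hyperplanes, each family containing infinitely many pairwise disjoint hyperplanes, with every hyperplane of one family crossing every hyperplane of the others. Hyperplanes of the Cayley median graph are labelled by generators $\sigma_I$ up to conjugation. Two hyperplanes cross only when their labels satisfy relation (ii) or a conjugate of it, so crossing labels correspond to disjoint, non-adjacent subsets or the conjugated form from (iii). I would show that each quasiline direction must involve at least two labels forming a nested pair $J\subsetneq I$, because a single label gives only finite, dihedral-type growth. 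I would then show that the sets $I$ attached to different directions are pairwise disjoint, or at least use up two vertices of $S$ each, which gives $2r\le n$. Making this precise is the hard step. It requires controlling how relation (iii) twists labels ($I\mapsto\omega_JI\omega_J$) along the quasiline, and showing that transversality forces disjoint supports of size at least $2$. I would first attempt it by induction on $n$: remove the support of one direction, pass to the stabilizer of the corresponding hyperplane carrier (a parabolic-type subgroup $C_{W_{S'}}$ with $|S'|\le n-2$), and apply induction.
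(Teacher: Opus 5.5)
Your splitting $C_W\cong\prod_i C_{W_{S_i}}$ is correct, and the overall plan (Wise--Woodhouse for the upper bound, explicit commuting elements for the lower bound) matches the paper's. However, neither direction is actually established.

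For the lower bound, your candidates $x_i=\sigma_{[1,2i]}\sigma_{[1,2i-1]}$ do not commute. Indeed, $g_W(x_i)=\omega_{[1,2i]}\omega_{[1,2i-1]}$ is the cycle $(1\,2\,\cdots\,2i+1)\in S_n$, and for $i<j$ this does not commute with $g_W(x_j)$. In $C_W$ itself, conjugation by $x_j$ shifts labels: $x_jx_ix_j^{-1}=\sigma_{[2,2i+1]}\sigma_{[2,2i]}$. The missing idea is to build each block element so that its conjugation action returns every generator supported on the earlier blocks to itself. The paper's $a_j=\Sigma_{2j,2j}\Sigma_{1,2j-1}\Sigma_{2,2j}\Sigma_{1,1}\Sigma_{2,2j}\Sigma_{1,2j-1}$ centralizes every $\sigma_I$ with $I\subseteq I_{1,2j-2}$. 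The reversals by $\Sigma_{1,2j-1}$ and $\Sigma_{2,2j}$ are undone, and $\Sigma_{1,1}$, $\Sigma_{2j,2j}$ are orthogonal to the intermediate labels. The other types need genuinely different words: $b^2$ with $b=\Sigma_{1,n-1}\Sigma_{2,n}\sigma_J$ in $D_{2m}$, an eight-letter word for the third pair in type $E$, $(\Sigma_{1,3}\Sigma_{4,4})^{6}$ in $F_4$, and so on. So ``the same way'' does not cover them. Your certificate for infinite order and independence is also unavailable as stated. Killing generators in $C_W$ is not a clean retraction: by (iii), killing $\sigma_I$ kills every $\sigma_{\omega_JI\omega_J}$, and killing $\sigma_J$ identifies $\sigma_I$ with $\sigma_{\omega_JI\omega_J}$. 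The paper instead places the $a_i$ in $PC_W$ and uses Yu's embedding $\Phi$ into the right-angled Coxeter group $\widehat W$. There the maps $\rho_{\mathcal A}$ are genuine homomorphisms, and $\rho_{\mathcal A}(\Phi(a_i))$ is an infinite-order element of a free product of copies of $\mathbb Z_2$.

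For the upper bound, the combinatorial heart is missing, and the criteria you propose point the wrong way. Two edges at a vertex with nested labels always span a square (Lemma~\ref{lem:squares}). So a quasiline direction is witnessed by two edges at a vertex whose labels are neither nested nor orthogonal (e.g.\ $\{s_1\},\{s_2\}$ in type $A_2$), not by a nested pair. Labels of different directions also need not be disjoint: in the sharp configuration $P_j=\{s_{2j}\}$, $Q_j=I_{1,2j-1}$ one has $Q_1\subset Q_2\subset\cdots$.

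The paper instead localizes at a vertex. Since no quasiline $C_i$ is a cube, each has two edges at a vertex not spanning a square. These give a join of $m$ copies of $\mathbb S^0$ in the link of a vertex of $\prod C_i$. By convexity (Sakai--Sakuma) this join is full in $\operatorname{Lk}_{\mathscr A_W}(v)$, which is the flag complex on connected subdiagrams with adjacency ``nested or orthogonal'' and splits as a join over components.

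Lemma~\ref{lem:tree_upper_bound} then shows $q\le\lfloor n_i/2\rfloor$ by induction. Choose $P_1$ of maximal size among the $2q$ vertices. Every other vertex is then contained in or orthogonal to each of $P_1$ and $Q_1$. Hence each remaining pair lies in a single connected component of one of the following: $P_1\cap Q_1$; the part of $P_1$ (resp.\ $Q_1$) neither meeting nor adjacent to the other; or the part of $\Gamma_i$ neither meeting nor adjacent to $P_1\cup Q_1$. These components avoid a vertex of $P_1\setminus Q_1$ adjacent to $Q_1$ and a vertex of $Q_1\setminus P_1$ adjacent to $P_1$, which gives $q\le 1+(n_i-2)/2$.

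Your hyperplane-carrier induction does not substitute for this. It does not explain why one direction consumes exactly two vertices. The remaining directions may spread over several such components rather than one subdiagram $S'$. And it is unclear that carrier stabilizers are sub-cactus groups $C_{W_{S'}}$.
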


When $(W,S)$ is of type $A_{n-1}$, we have $|S|=n-1$, and the
generalized cactus group $C_W$ is isomorphic to the ordinary cactus
group $J_n$. Hence, Theorem~\ref{thm:maximal_rank} covers the above
result for $J_n$. Regarding the hyperbolicity, it is known that $J_n$ is hyperbolic for $n \le 4$, while $J_n$ is not hyperbolic for $n \ge 6$~\cite[Proposition~6.1]{Genevois}. 
For $n=5$, Theorem~\ref{thm:maximal_rank} gives an explicit algebraic proof of non-hyperbolicity by exhibiting a subgroup isomorphic to $\mathbb{Z}^2$ in terms of the standard generators of $J_5$.
The same conclusion can also be deduced from earlier topological results. Indeed, the pure cactus group $PJ_5$, which is a subgroup of $J_5$, is isomorphic to $\pi_1(\overline{M}_{0,6}(\mathbb{R}))$~\cite[Theorem~9]{Henriques_Kamnitzer}, and this fundamental group contains a subgroup isomorphic to $\mathbb{Z}^2$~\cite[Example~3.22]{Bonifant_Milnor}.

Moreover, Yu~\cite{Yu} proved that generalized cactus groups associated with finite Coxeter systems are linear. For general Coxeter systems, Yu also showed that the generalized pure cactus groups embed into right-angled Coxeter groups.
Right-angled Coxeter groups are closely related to right-angled Artin groups (RAAGs); in particular, every RAAG occurs as a finite-index subgroup of a right-angled Coxeter group~\cite{Davis_Januszkiewicz}.
In addition, RAAGs form a standard class of groups acting geometrically on \text{CAT(0)} cube complexes and also play an important role in the theory of special cube complexes~\cite{Haglund_Wise}.
The quasi-isometry classification of RAAGs has been studied for several classes, including tree RAAGs, atomic RAAGs, and RAAGs with finite outer automorphism groups~\cite{Behrstock_Neumann, Bestvina_Kleiner_Sageev, Huang_2}. Huang and Kleiner~\cite{Huang_Kleiner} further characterized finitely generated groups quasi-isometric to RAAGs with finite outer automorphism groups.
This motivates us to investigate whether generalized cactus groups can be quasi-isometric to RAAGs.

As an application of Theorem~\ref{thm:maximal_rank}, we prove the following result on when a generalized cactus group can be quasi-isometric to a RAAG.

\begin{theorem}\label{thm:mainRAAGQI}
Suppose that the Coxeter diagram of a finite Coxeter system $(W,S)$ has a connected component whose number of vertices is at least three. (Equivalently, the generalized cactus group $C_W$ is not virtually $\mathbb{Z}^d$ for any $d\geq 0$.)
Then $C_W$ is not quasi-isometric to any RAAG. 
\end{theorem}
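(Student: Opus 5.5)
We will compare the two groups through a quasi-isometry invariant that Theorem~\ref{thm:maximal_rank} lets us compute. The natural choice is the maximal rank of free abelian subgroups. For groups acting geometrically on CAT(0) cube complexes this equals the maximal dimension of quasi-flats, which is a quasi-isometry invariant by results on top-rank quasiflats (Huang, or Bestvina--Kleiner--Sageev for RAAGs). The same invariant governs the asymptotic cone and the coarse dimension.

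\textbf{Step 1: reduce to an irreducible factor and set up a contradiction.} Write $(W,S)$ as a product of irreducible components; then $C_W=\prod_i C_{W_i}$. A component with $n_i\le 2$ gives a finite or virtually cyclic factor. Suppose $q\colon C_W\to A_\Gamma$ is a quasi-isometry. By Theorem~\ref{thm:maximal_rank}, the rank of maximal flats in the cube complex for $C_W$ is $d=\sum\lfloor n_i/2\rfloor$. Huang's top-dimensional quasiflat rigidity then forces $\Gamma$ to have clique number $d$. I would use quasiflats of top rank $d$ in $A_\Gamma$: each is at finite Hausdorff distance from a finite union of standard flats.

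\textbf{Step 2: find a distinguishing geometric feature.} For RAAGs, every top-rank flat sits in a product region $A_\Delta$, where $\Delta$ is a maximal clique. Moreover, every element lies in a top-rank flat: $A_\Gamma$ is covered by translates of the standard $\mathbb Z^d$ flats, so every point is within bounded distance of a top-rank flat. I would then show that $C_W$ fails this for an irreducible component $S_i$ with $n_i\ge 3$.

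\begin{itemize}
\item When $n_i$ is odd, or more generally, I expect some region of the Cayley graph of $C_{W_i}$ to be far from every flat of dimension $\lfloor n_i/2\rfloor$. Examples are long words in generators $\sigma_I$ with $|I|$ large, which produce hyperbolic-like behaviour, e.g.\ free subgroups whose orbits avoid all top flats.
\item Alternatively, compare the growth of the number of top-rank flats through a ball, or use the coarse-median rank at infinity.
\end{itemize}

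The cleanest formal version is as follows. In $A_\Gamma$ the union of maximal-rank flats is coarsely dense. In $C_W$ I would exhibit a sequence of points at unbounded distance from every top-rank flat. This uses the hyperplane structure of the cube complex: the hyperplanes of a top flat come in $d$ pairwise transverse classes, and a point separated from a flat by many hyperplanes is far from it.

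\textbf{Step 3: handle the case of rank $1$.} If $d=1$, i.e.\ every $n_i\le 3$, then $C_W$ is virtually a product of hyperbolic groups with at most one non-elementary factor. The claim then reduces to showing that $C_{W_i}$ is not virtually free. Hyperbolic RAAGs are free groups. I would show that $C_{W_i}$ has one end, or contains a surface subgroup, when $n_i=3$; for instance, $J_4$ contains surface groups via $\overline M_{0,5}(\mathbb R)$, which is a closed surface. The case $d=1$ should thus follow from known topology in the type $A_3$ case, and from an analogous computation of ends for $B_3$ and $H_3$.

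The main obstacle is Step 2: proving that some point of $C_{W_i}$ lies far from all top-rank flats. I would first try explicit words built from $\sigma_S$ and $\sigma_I$ with $|I|=n_i-1$, using the relation (iii) to show that their normal forms cannot be absorbed into a product of $\lfloor n_i/2\rfloor$ commuting pieces.
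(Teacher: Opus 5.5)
\textbf{There is a genuine gap: the distinguishing feature you propose in Step~2 does not exist.} Step~2 is not just unfinished; it cannot work. By Theorem~\ref{thm:maximal_rank}, $C_W$ contains $H\cong\mathbb Z^d$ with $d=\sum_i\lfloor n_i/2\rfloor$. The flat torus theorem gives an $H$-invariant $d$-flat $F\subset\mathscr{A}_W$. Since $C_W$ acts cocompactly, every point of $\mathscr{A}_W$ lies within uniformly bounded distance of some translate $gF$. So top-rank flats are coarsely dense in $C_W$, exactly as in $A_\Gamma$. This holds in any cocompact group containing an abelian subgroup of top rank. Hence you cannot exhibit points escaping all top-rank flats, whether by words in large $\sigma_I$ or otherwise. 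Step~3 only treats $d=1$, so nothing in your proposal handles the general case.

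There is also a smaller problem in Step~1. Huang's top-dimensional theorem does not by itself give $\omega(\Gamma)=d$. It applies only to quasiflats whose dimension equals that of the cube complex, and rank-$d$ quasiflats in $\mathscr{A}_W$ are not of that kind. You would need instead that quasiflat rank equals flat rank for cocompact CAT(0) spaces, together with flat closing for virtually special cube complexes.

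\textbf{The missing idea is the one you wave away at the start} (``the same invariant governs \dots\ the coarse dimension''). For RAAGs, asymptotic dimension equals the clique number, which is also the maximal abelian rank. For $C_W$ these two numbers differ.

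The paper's argument runs as follows.
\begin{enumerate}
\item It shows $\mathscr{A}_W\cong\mathscr{M}_W\times[0,1]^k$, where $\mathscr{M}_W$ is a CAT(0) cubical manifold of dimension $n-k$. Its vertex links are joins of the nested complexes $N(S_i)\cong\mathbb S^{n_i-2}$, realized via graph associahedra. Hence $\operatorname{asdim}(C_W)=n-k$.
\item A quasi-isometry to $A_\Gamma$ then forces $\omega(\Gamma)=n-k$, which produces an $(n-k)$-quasiflat in the $(n-k)$-dimensional complex $\mathscr{M}_W$.
\item Now Huang's theorem genuinely applies. It gives an orthant subcomplex, hence a combinatorial flat.
\item $PC_W$ acts conspicially with trivial cube stabilizers, so a finite cover of $\mathscr{A}_W/PC_W$ is compact special. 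Genevois's theorem then yields $\mathbb Z^{n-k}\le C_W$.
\item Since $\lfloor n_i/2\rfloor<n_i-1$ whenever $n_i\ge 3$, this contradicts $n-k\le\sum_i\lfloor n_i/2\rfloor$.
\end{enumerate}

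With the asymptotic-dimension computation in hand, your Step~1 (suitably justified) would also close the argument directly. It would give $d=\omega(\Gamma)=\operatorname{asdim}(A_\Gamma)=\operatorname{asdim}(C_W)=n-k$, which is impossible once some $n_i\ge 3$.
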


The finiteness of $W$ plays an essential role in our proof of Theorem~\ref{thm:mainRAAGQI}.
It ensures that $PC_W$ has finite index in $C_W$, allowing us to obtain a compact special cube complex whose fundamental group has finite index in $C_W$.
We then apply Genevois's result~\cite[Theorem~4.6]{Genevois_2} to obtain free abelian subgroups from combinatorial flats.
When $W$ is infinite, $PC_W$ has infinite index in $C_W$, so this construction does not yield the compact special cube complex required to apply Genevois's theorem.

The paper is organized as follows. In Section~\ref{sec:preliminaries}, we collect the definitions and preliminary results needed throughout the paper. 
In Section~\ref{sec:maximal_rank}, we prove the ``only if'' direction of Theorem~\ref{thm:maximal_rank}, establishing an upper bound on the ranks of free abelian subgroups of generalized cactus groups. 
In Section~\ref{sec:finding_subgroups}, we verify the sharpness of these upper bounds by constructing free abelian subgroups of the corresponding ranks.
Finally, in Section~\ref{sec:application}, we study the large-scale geometry of generalized cactus groups and prove Theorem~\ref{thm:mainRAAGQI}.

\vspace{0.6cm} 

\noindent \textbf{Acknowledgment.} 
We thank Anthony Genevois for bringing Theorem~\ref{thm:mainRAAGQI} in the case of ordinary cactus groups to our attention.
The first author was supported by the Basic Science Research Program through the National Research Foundation of Korea(NRF) funded by the Ministry of Education (RS-2021-NR060138). The second author was supported by Israel Science Foundation Grant 3423/24 (PI Ilya Gekhtman).

\section{Preliminaries}
\label{sec:preliminaries}

In this section, we recall the definitions and preliminary results that will be used throughout the paper.

\subsection{Coxeter groups}

Let $W$ be a group together with a generating set $S$ such that
\[
W=\langle S \svert (st)^{m(s,t)}=1 \ \text{ for } s,t\in S \text{ with } m(s,t)<\infty \rangle,
\]
where $m(s,s)=1$ for all $s\in S$ and
$m(s,t)=m(t,s)\in\{2,3,\ldots,\infty\}$ for $s\neq t$.
Then $W$ is called a {\em Coxeter group}, and the pair $(W,S)$ is called a {\em Coxeter system}.

For $I\subset S$, let $W_I=\langle I\rangle$.
A subgroup of the form $W_I$ for some $I \subset S$ is called a {\em standard parabolic subgroup} of $W$. It is well known that $W_I$ is itself a Coxeter group with generating set $I$.
A subgroup $P \le W$ is called {\em parabolic} if
$P = w W_I w^{-1}$ for some $w \in W$ and some $I \subset S$.
A subset $I\subseteq S$ is called \emph{spherical} if $W_I$ is finite,
and \emph{irreducible} if the full subdiagram of the Coxeter diagram
spanned by $I$ is connected.
Let $\mathcal I$ denote the collection of nonempty spherical irreducible
subsets of $S$.
For a spherical subset $I\subseteq S$, we denote by
$\omega_I\in W_I$ its longest element.
The cardinality of $S$ is called the {\em rank} of $(W,S)$.
If $P\le W$ is a parabolic subgroup, write $P=wW_Tw^{-1}$ for some $T\subseteq S$ and $w\in W$.
We define the {\em rank} of $P$ to be $|T|$.
This definition is well-defined. To see this, suppose that
$P=wW_Tw^{-1}=w'W_{T'}w'^{-1}$.
Then $W_T$ and $W_{T'}$ are conjugate in $W$.
Hence, $|T|=|T'|$ by \cite[Lemma~3.2]{Qi}.

We will also use the following notion of orthogonality for spherical irreducible subsets and parabolic subgroups.

\begin{definition}[Orthogonality and nesting]
For $I,J\in\mathcal I$, we say that $I$ and $J$ are \emph{orthogonal}, and write $I\perp J$, if $W_{I\cup J}=W_I\times W_J$.
We say that $I$ and $J$ are \emph{nested} if either $I\subset J$ or $J\subset I$.

For parabolic subgroups $P,Q\leq W$, we say that $P$ and $Q$ are
\emph{orthogonal} if $PQ=QP$ and the multiplication map
$P\times Q\longrightarrow PQ$ defined by $(p,q)\longmapsto pq$
is an isomorphism.
We say that $P$ and $Q$ are \emph{nested} if either $P\subset Q$ or
$Q\subset P$.
\end{definition}

For $I,J\in\mathcal I$, the condition $I\perp J$ is equivalent to
$m(s,t)=2$ for every $s\in I$ and $t\in J$.
Equivalently, no vertex of $I$ is joined to a vertex of $J$ in the Coxeter diagram.
Let $\ell:W\to\mathbb{N}$ denote the word length with respect to $S$, where $\ell(w)$ is the minimum length of a word over $S$ representing $w$.

The following lemma will be useful for determining whether an element lies in a given standard parabolic subgroup.

\begin{lemma}\label{lem:parabolic-right-descent}
Let $(W,S)$ be a Coxeter system, and let $T\subseteq S$.
If $w\in W_T=\langle T\rangle$ and $s\in S$ satisfies $\ell(ws)<\ell(w)$, then $s\in T$.
\end{lemma}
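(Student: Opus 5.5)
We prove the statement directly from standard facts on reduced expressions; the key tool is the exchange/deletion property together with the fact that every reduced expression of an element of $W_T$ uses only letters of $T$.

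First, we recall that for $w\in W_T$ any reduced expression of $w$ over $S$ consists only of letters from $T$. One way to see this: write $w=t_1\cdots t_m$ with $t_i\in T$ (possible since $w\in\langle T\rangle$). Using the deletion condition repeatedly, we may delete pairs of letters until we obtain a reduced expression, which is a subword and so still consists only of letters from $T$. Since all reduced expressions of $w$ have the same set of letters (a consequence of Tits' word theorem, or of the standard fact $W_T\cap W_{T'}=W_{T\cap T'}$), every reduced expression of $w$ involves only letters of $T$. To avoid relying on the set-of-letters fact, we instead argue directly in the next step with the particular reduced expression $t_1\cdots t_m$ just obtained.

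Now suppose $\ell(ws)<\ell(w)$, and let $w=t_1\cdots t_m$ be the reduced expression with all $t_i\in T$, so that $m=\ell(w)$. By the exchange condition, there is an index $i$ with
\[
ws=t_1\cdots \widehat{t_i}\cdots t_m .
\]
Rearranging gives $s=w^{-1}t_1\cdots\widehat{t_i}\cdots t_m$. The right-hand side is a product of elements of $W_T$, so $s\in W_T$.

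It remains to deduce $s\in T$ from $s\in W_T$. A reduced expression of $s$ in $W_T$, taken with respect to the generating set $T$, is a product of letters from $T$. Its length over $T$ is at least $\ell(s)=1$, and the problem is to show it equals $1$. The clean route is the standard fact that the length function of $(W_T,T)$ is the restriction of $\ell$. Granting this, $s$ equals some $t\in T$ as elements. Since distinct elements of $S$ are distinct in $W$ (indeed, $S$ embeds via the geometric representation as distinct reflections), we get $s=t\in T$.

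The main obstacle is this final step, passing from $s\in W_T$ to $s\in T$. I would handle it by citing the standard result from Bourbaki or Humphreys that $W_T\cap S=T$, equivalently the statement $W_T\cap W_{\{s\}}=W_{T\cap\{s\}}$. Alternatively, I would apply the geometric representation: for $w\in W_T$, $w(\alpha_s)-\alpha_s$ lies in the span of $\{\alpha_t : t\in T\}$, whereas $s(\alpha_s)=-\alpha_s$, so $s\notin W_T$ whenever $s\notin T$.
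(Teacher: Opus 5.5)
Your argument is correct, but it takes a different route from the paper. The paper combines three citations from Bj\"orner--Brenti: since $\ell(ws)<\ell(w)$, $w$ has a reduced expression ending in $s$; since $w\in W_T$, $w$ has a reduced expression using only letters of $T$; and all reduced expressions of $w$ use the same set of letters, so $s\in T$.

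You instead apply the exchange condition to a $T$-reduced word for $w$. This gives $ws\in W_T$, hence $s\in W_T$, and the lemma reduces to the single fact $S\cap W_T=T$. The paper's proof is shorter, but it relies on the invariance of the letter set of reduced expressions, a consequence of Tits' word property, and that is exactly the input you avoid. Your route isolates what is actually needed.

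Your ``main obstacle'' is in fact already handled by your own first paragraph. Write $s=t_1\cdots t_m$ with $t_i\in T$ and apply the deletion condition in $(W,S)$. This produces a reduced subword of length $\ell(s)=1$, so $s=t_i$ for a single letter $t_i\in T$. Since $S\subseteq W$, this gives $s\in T$ directly, without any remark about distinct reflections.

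So you do not need to cite $\ell_T=\ell|_{W_T}$, and you do not need the geometric representation. That argument is also valid, provided you note that it uses the linear independence of the simple roots. With this simplification, your proof of the lemma rests only on the exchange and deletion conditions.
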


\begin{proof}
Since $\ell(ws)<\ell(w)$, by \cite[Corollary~1.4.6(ii)]{Bjorner},
there exists a reduced expression of $w$ whose last letter is $s$.

On the other hand, since $w\in W_T$, by
\cite[Proposition~2.4.1(ii)]{Bjorner}, $w$ has a reduced expression
whose letters all lie in $T$.
By \cite[Corollary~1.4.8(ii)]{Bjorner}, the set of letters appearing
in a reduced expression of $w$ is independent of the chosen reduced
expression.
Therefore, $s\in T$.
\end{proof}


We say that a Coxeter system is \emph{finite} if $W$ is finite and $(W,S)$ is \emph{irreducible} if its Coxeter diagram is connected.
We use the classification of finite irreducible Coxeter systems in the form given by Davis~\cite[Table~6.1 and Theorem~C.1.2]{Davis}.
The possible types of finite irreducible Coxeter systems are
\[
A_n,\ B_n,\ D_n,\ E_6,\ E_7,\ E_8,\ F_4,\ H_3,\ H_4,\ I_2(p) \ (p\geq 5).
\]
Therefore, every finite Coxeter group is isomorphic to the product of finitely many finite irreducible Coxeter groups.
 Our indexing convention for the Coxeter diagram of type $D_n$ differs from that used in~\cite[Section~8.2]{Bjorner}.
\begin{figure}[h]
\begin{center}
	\includegraphics[width=0.72\columnwidth]{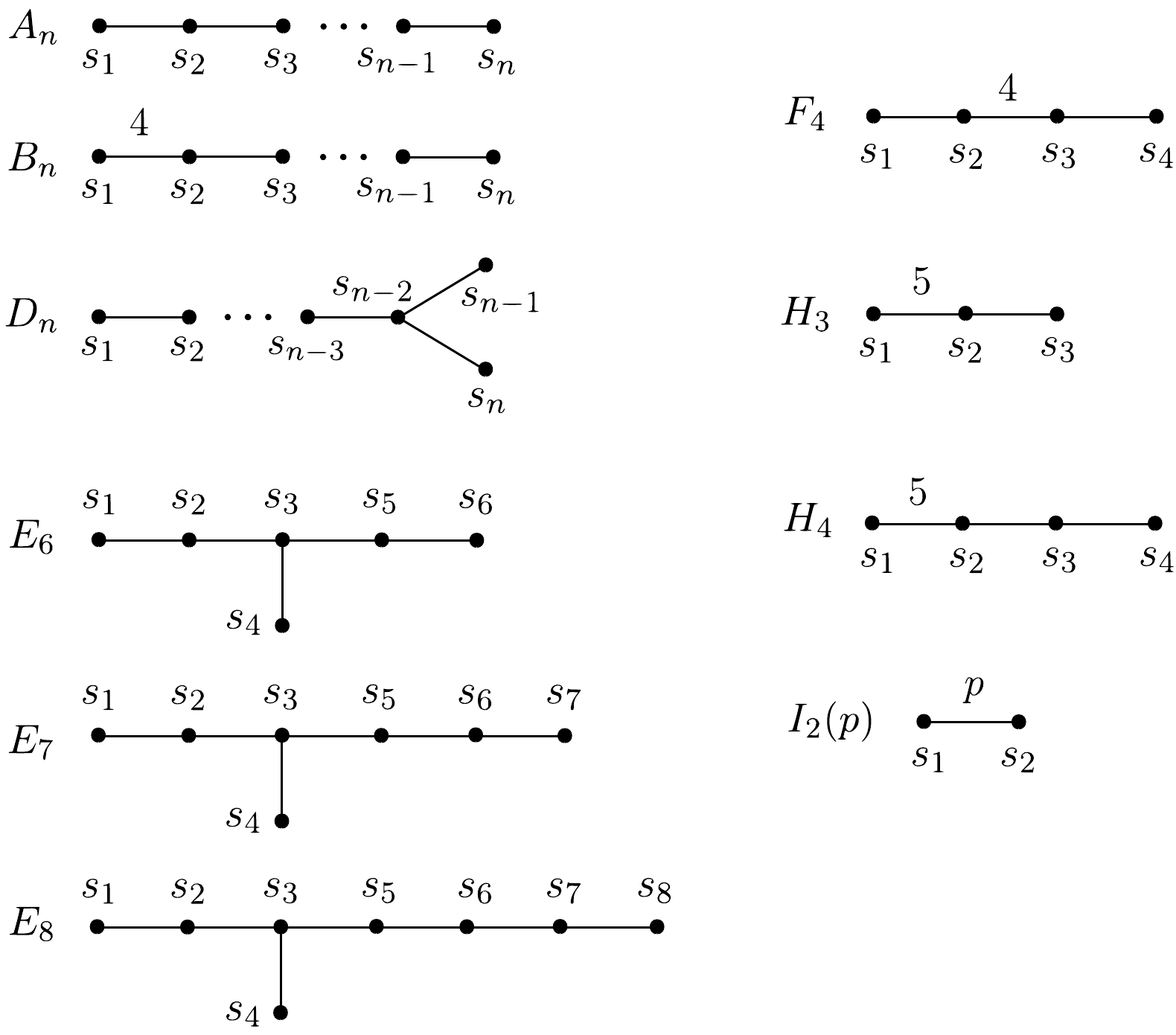}
\end{center}
\caption{\label{fig:diagrams}
Coxeter diagrams of finite irreducible Coxeter systems $(W, S)$}						
\end{figure}

\subsection{Longest elements of finite Coxeter groups}
\label{subsec:finitecoxetergroups}
In this subsection, we describe the longest elements of finite irreducible Coxeter groups type by type.
For types $A_n$, $B_n$, $D_n$, and $I_2(p)$, we use their standard permutation or reflection models.
Throughout this subsection, we write $S=\{s_1,\ldots,s_n\}$.

We describe the longest elements of finite Coxeter groups type by type.
For the infinite families $A_n,B_n,D_n$, we use the following concrete models.
In type $A_n$, we identify $W$ with the symmetric group $S_{n+1}$, where $s_i$ is the transposition $(i\ i+1)$.
See \cite[Example~1.2.3 and Proposition~1.5.4]{Bjorner}.
Under this identification, the longest element of the standard parabolic subgroup generated by $s_p, s_{p+1}, \ldots,s_q\in S$, where $1 \le p \le q \le n$, is the permutation which reverses the interval $\{p,\ldots,q+1\}$ and fixes its complement. (See \cite{Bjorner} the discussion following Proposition 2.3.4 for details.)

\medskip 

In type $B_n$, 
let $S^B_n$ be the group of all bijections $w$ of $[\pm n]=\{-n,\ldots,-2,-1,1,2,\ldots n\}$ satisfying
\[
w(-a)=-w(a)
\]
for every $a\in[\pm n]$.
We write
\[
w=[a_1,\ldots,a_n]
\]
to mean that $w(i)=a_i$ for $1\leq i\leq n$; this is called the \emph{window} notation.
We call $S^B_n$ the group of all {\em signed permutations} of $[n]=\{1,2,\ldots,n\}$.

Following Björner--Brenti~\cite[Section~8.1]{Bjorner}, let 
\[
S^B=\{s^B_0,s^B_1,\ldots,s^B_{n-1}\},
\]
where
\[
s^B_0=[-1,2,\ldots,n]
\]
and, for $1\leq j\leq n-1$,
\[
s^B_j=[1,\ldots,j-1,j+1,j,j+2,\ldots,n].
\] 
In other words, $s^B_0$ changes the sign of the first entry, while $s^B_j$ interchanges the $j$th and $(j+1)$st entries.
Then, by~\cite[Proposition~8.1.3]{Bjorner}, $(S^B_n,S^B)$ is a Coxeter system of type $B_n$.
Our generators are related to the above generators by the following:
\[
s_1=s^B_0,\qquad s_j=s^B_{j-1}\quad (j\geq 2).
\]
With the window notation, the longest element $w_0$ of the Coxeter group of type $B_n$ is the signed permutation which changes the sign of every entry, that is, 
\[
w_0=[-1,-2,\ldots,-n].
\]
Indeed, for $v\in S^B_n$ and $0\le i\le n-1$, one has
\[
\ell(vs_i^B)<\ell(v)\iff v(i)>v(i+1)\qquad (v(0):=0),
\]
see \cite[Proposition~8.1.2]{Bjorner}. Since $0>w_0(1)=-1$ and
$w_0(i)=-i>-(i+1)=w_0(i+1)$ for all $1\le i\le n-1$, we have
\[
\ell(w_0s_i^B)<\ell(w_0)\quad\text{for every }i=0,1,\ldots,n-1.
\]
Therefore, $w_0^{-1}$ satisfies $\ell(s_i^B w_0^{-1})<\ell(w_0^{-1})$ for every $i=0, 1, \dots, n-1$ because $\ell(u)=\ell(u^{-1})$.
By \cite[Proposition~2.3.1(ii)]{Bjorner}, it follows that
$w_0^{-1}$ is the longest element. 
Finally, $w_0^{-1}=w_0$ since $w_0^2=1$.

\medskip 

Similarly, in type $D_n$, let $S^D_n$ be the subgroup of $S^B_n$ consisting of signed permutations with an even number of negative entries:
\[
S^D_n=\{w\in S^B_n\mid \#\{i\mid w(i)<0\}\equiv 0 \pmod 2\}.
\]
Following Björner--Brenti~\cite[Section~8.2]{Bjorner}, let
\[
S^D=\{s^D_0,s^D_1,\ldots,s^D_{n-1}\},
\]
where
\[
s^D_0=[-2,-1,3,\ldots,n] \quad \text{and} \quad s^D_j=s^B_j \quad \text{for $1 \le j \le n-1$.}
\]
Thus, $s^D_j$ interchanges the $j$th and $(j+1)$st entries for $1\leq j\leq n-1$, while $s^D_0$ interchanges the first two entries and changes both signs.
Then, by~\cite[Proposition~8.2.3]{Bjorner}, $(S^D_n,S^D)$ is a Coxeter system of type $D_n$.
Moreover, the longest element of the Coxeter group of type $D_n$ is
\[
w_0=
\begin{cases}
[-1,-2,\ldots,-n] & \text{if } n \text{ is even}, \\
[1,-2,\ldots,-n] & \text{if } n \text{ is odd}.
\end{cases}
\]
As in type $B_n$, this follows from the standard signed-permutation model and the length criterion for type $D_n$ in the window notation; see \cite[Proposition~8.2.2]{Bjorner}.

Note that our indexing convention for type $D_n$ differs from that in~\cite[Section~8.2]{Bjorner} (see Figure~\ref{fig:diagrams}). 
With this convention, our generators correspond to those in~\cite[Section~8.2]{Bjorner} as follows:
\[
s_j=s^D_{n-j} \quad \text{for $1\leq j\leq n-2$},
\qquad
s_{n-1}=s^D_1,
\qquad
s_n=s^D_0.
\]
In~\cite[Section~8.2]{Bjorner}, $s_j^D$ interchanges the $j$th and $(j+1)$st entries in the window notation for $1 \le j \le n-1$, and
$s_0^D=[-2,-1,3,\ldots,n]$. Under the correspondence above, for $1\leq j\leq n-1$, the generator
$s_j$ interchanges the $(n-j)$th and $(n-j+1)$st entries, while
\[
s_n=[-2,-1,3,\ldots,n].
\]

\medskip 

For type $I_2(p)$, we use the standard dihedral realization.
Thus, $W(I_2(p))$ is generated by two reflections $r_1$ and $r_2$ in lines that meet at angle $\pi/p$; their product $r_1r_2$ is a rotation through angle $2\pi/p$.
Moreover, every element is represented by an alternating word in the two simple generators of length at most $p$, and the two alternating words of length $p$ represent the same element; see \cite[Example~1.2.7]{Bjorner}.
Hence, the element represented by these two alternating words of length $p$ is the longest element.

\medskip 

For the remaining finite irreducible types $E_6,E_7,E_8,F_4,H_3$, and $H_4$, we used SageMath as follows.
We first imported SageMath's standard library and used the following helper function to construct the Coxeter group associated with a Coxeter matrix $M$:
\[
\texttt{CoxeterGroup(M, implementation="reflection")}.
\]
In our code, this was written as follows:
\begin{verbatim}
from sage.all import *

def Coxeter_group(M):
    return CoxeterGroup(M, implementation="reflection")
\end{verbatim}

For an element $g$ of a Coxeter group, we computed a reduced expression using SageMath's method \texttt{g.reduced\_word()}:
\begin{verbatim}
def reduced_word_expression(g):
    return g.reduced_word()
\end{verbatim}
We also use this method to check whether a given element is the identity: the output is the empty word if and only if the element is the identity.

To check whether an element $g$ is the longest element, we compared it with SageMath's built-in longest element \texttt{W.long\_element()}:
\begin{verbatim}
def longest_check(W, g):
    return g == W.long_element()
\end{verbatim}

In each concrete computation, we then specified the relevant Coxeter matrix and the word in the Coxeter generators.

We shall use the following standard fact of longest elements of finite irreducible Coxeter systems.
It determines exactly when the longest element acts trivially by conjugation, or equivalently, when the opposition involution is trivial. See \cite[Fact~1]{Zibrowius}, which records the classification from \cite[\S 27--2]{Kane}.

\begin{lemma}
{\cite[Proposition~2.3.2(i)]{Bjorner}\textnormal{;}
\cite[Remark~3.1.2]{Davis_Januszkiewicz_Scott_2}}
\label{lem:central-longest-element}
Let $(W,S)$ be a finite irreducible Coxeter system, and let $w_0$ denote its longest element. Then $w_0^2=1$.
Moreover, $w_0$ is central in $W$ if and only if the Coxeter type of $(W,S)$ is one of
\[
A_1,\quad B_n\ (n\geq 2),\quad D_{2m}\ (m\geq 2),\quad
E_7,\quad E_8,\quad F_4,\quad H_3,\quad H_4,
\]
or a dihedral type $I_2(p)$ with $p$ even.
Equivalently, $w_0$ is not central precisely in types
\[
A_n\ (n\geq 2),\quad D_{2m+1}\ (m\geq 2),\quad E_6,
\]
and in dihedral type $I_2(p)$ with $p$ odd.
\end{lemma}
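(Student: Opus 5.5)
The statement to prove is Lemma~\ref{lem:central-longest-element}: $w_0^2=1$, and $w_0$ is central exactly in the listed types. I would split it into these two parts and argue type by type using the models of Subsection~\ref{subsec:finitecoxetergroups}, together with one general reduction that makes centrality checkable on generators.

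The identity $w_0^2=1$ is general. Since $\ell(u^{-1})=\ell(u)$ and the longest element is unique (\cite[Proposition~2.3.1]{Bjorner}), the element $w_0^{-1}$ is also longest, so $w_0^{-1}=w_0$.

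For centrality, I would first recall the standard fact that $w_0 S w_0=S$. Indeed, for $s\in S$ we have $\ell(w_0 s w_0)=\ell(w_0)-\ell(sw_0)=1$, using $\ell(w_0u)=\ell(w_0)-\ell(u)$. Hence conjugation by $w_0$ induces an involutive automorphism of the Coxeter diagram. Since $S$ generates $W$, $w_0$ is central if and only if this diagram automorphism is the identity. So it suffices to compute $w_0sw_0$ for each $s\in S$ and each irreducible type.

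\begin{itemize}
\item \emph{Type $A_n$.} In $S_{n+1}$, $w_0$ is the reversal $i\mapsto n+2-i$, so $w_0 s_i w_0=s_{n+1-i}$. This fixes every $s_i$ if and only if $n=1$.
\item \emph{Type $B_n$.} Here $w_0=[-1,\ldots,-n]=-\mathrm{id}$ commutes with every signed permutation, so $w_0$ is central.
\item \emph{Type $D_n$.} For $n$ even, $w_0=-\mathrm{id}$ is central. For $n$ odd, $w_0=[1,-2,\ldots,-n]$, and conjugating $s^D_0=[-2,-1,3,\ldots,n]$ by it gives $[2,1,3,\ldots,n]=s^D_1$. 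So $w_0$ swaps the two fork generators and is not central.
\item \emph{Type $I_2(p)$.} $w_0$ is the alternating word of length $p$ in $r_1,r_2$, which is the rotation $(r_1r_2)^{p/2}$ by $\pi$ when $p$ is even, hence central. When $p$ is odd, $w_0$ is a reflection, and $w_0r_1w_0=r_2$.
\item \emph{Types $E_6,E_7,E_8,F_4,H_3,H_4$.} I would use the SageMath helpers described above: take \texttt{W.long\_element()} and check for each generator $s$ whether $w_0 s w_0 s$ has empty reduced word. Alternatively, for the types whose diagram has no nontrivial automorphism ($E_7,E_8,H_3,H_4$), centrality is automatic. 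For $F_4$ one checks that $w_0$ acts as $-1$ in the reflection representation. For $E_6$, $-1\notin W$ because $-1$ would have to map the positive roots to the negative roots while preserving the diagram, and the known opposition involution is the nontrivial diagram flip.
\end{itemize}

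The main obstacle is these exceptional types. A uniform argument would be that $w_0$ is central if and only if $w_0=-1$ in the reflection representation, if and only if all degrees of $W$ are even. Since degrees are not developed in the paper, I would instead rely on the computational check, or cite \cite[Fact~1]{Zibrowius} and \cite{Kane} as the paper does.
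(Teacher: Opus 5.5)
Your proof is correct, but it takes a different route from the paper. The paper does not prove this lemma at all: it cites \cite[Proposition~2.3.2(i)]{Bjorner} for $w_0^2=1$, and \cite[Fact~1]{Zibrowius} (recording \cite{Kane}) for the list of types with central $w_0$. It also cites \cite[Remark~3.1.2]{Davis_Januszkiewicz_Scott_2} for the fact that, in the non-central cases, conjugation by $w_0$ is the nontrivial diagram automorphism.

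You instead use $\ell(w_0u)=\ell(w_0)-\ell(u)$ to reduce centrality to triviality of the involution $s\mapsto w_0sw_0$ of $S$. This involution preserves the $m(s,t)$ and is therefore a diagram automorphism. You then check triviality type by type:
\begin{enumerate}[\rm (a)]
\item explicitly in the models for $A_n$, $B_n$, $D_n$ and $I_2(p)$; your computation $w_0s^D_0w_0=s^D_1$ for odd $n$ is right;
\item by the absence of nontrivial label-preserving diagram automorphisms for $E_7,E_8,H_3,H_4$;
\item by computation for $F_4$ and $E_6$.
\end{enumerate}
This gives a self-contained argument for the infinite families and for most exceptional types. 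It also actually establishes the diagram-automorphism fact that the paper only quotes. The paper's citation is shorter but gives no insight.

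One caveat: your ``alternative'' argument for $E_6$ is circular. Saying that the opposition involution of $E_6$ is the nontrivial flip is precisely the claim that $w_0$ is not central. So for $E_6$ you do need the SageMath check, the citation, or the degree criterion (the degrees $2,5,6,8,9,12$ include odd ones). For $F_4$, by contrast, you can avoid computation. The roots $\pm e_i$ and $\pm e_i\pm e_j$ form a $B_4$ subsystem of the $F_4$ root system, so $-1\in W(B_4)\le W(F_4)$. Since $-1$ sends every positive root to a negative one, it is then the longest element, and it is central.
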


See also \cite[Proposition~2.3.2(i)]{Bjorner} for proof of the equality $w_0^2=1$. 
Furthermore, in each of the non-central cases, conjugation by the longest element induces the unique nontrivial automorphism of the Coxeter diagram. See \cite[Remark 3.1.2]{Davis_Januszkiewicz_Scott_2}.

\subsection{Generalized cactus groups}

We now discuss several basic properties of generalized cactus groups and their Cayley graphs. We refer to Definition~\ref{def:pcw} in the Introduction for the definitions of $C_W$ and $PC_W$.
If $W$ is finite, then $PC_W$ has finite index in $C_W$.

\begin{remark}
\label{rmk:J_n_isomorphic}
In~\cite[Proposition~2.2]{Yu}, it is noted that the ordinary cactus group $J_n$ is isomorphic to the generalized cactus group of type $A_{n-1}$.
\end{remark}

Let $\mathscr{C}_W$ be the Cayley graph of $C_W$ with respect to the generating set $\{\sigma_I\}$.
Then $\mathscr{C}_W$ is a median graph.
This follows from the following observations. 
In~\cite[Example~8.12]{Genevois_3}, Genevois realized generalized cactus groups as cactus products whose factors are all isomorphic to $\mathbb{Z}_2$. Since cactus products are quandle products, it follows from~\cite[Theorem~4.12]{Genevois_3} that $\mathscr{C}_W$ is quasi-median.
Moreover, by~\cite[Lemma~4.13]{Genevois_3}, every complete subgraph of $\mathscr{C}_W$ is contained in a coset of a factor, which has order two.
Hence $\mathscr{C}_W$ contains no triangle.
Therefore, $\mathscr{C}_W$ is a median graph by~\cite[Theorem~4.2]{Genevois_3}.

We next determine when two adjacent edges of $\mathscr{C}_W$ span a 4-cycle. As generalized cactus groups are special cases of quandle products, the corresponding result for quandle products applies directly.

\begin{lemma}{\cite[Lemma 4.15]{Genevois_3}}
\label{lem:squares}
Let $h\in C_W$ and $I,J\in\mathcal I$. The two adjacent edges
\[
(h,h\sigma_I)\quad\text{and}\quad(h,h\sigma_J)
\]
span a 4-cycle in $\mathscr{C}_W$ if and only if $I$ and $J$ are orthogonal or nested.
Moreover, any 4-cycle in $\mathscr{C}_W$ is obtained from one of the defining 4-cycles associated to a commuting relation or a nesting relation by the left translation action of $C_W$ on $\mathscr{C}_W$.
\end{lemma}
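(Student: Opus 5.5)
The statement just above is Lemma~\ref{lem:squares}, quoted from \cite[Lemma~4.15]{Genevois_3}. The plan is to derive it directly from the presentation of $C_W$ and the median structure of $\mathscr{C}_W$, rather than treat it as a black box.

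By left translation it suffices to take $h=1$. Suppose the edges $(1,\sigma_I)$ and $(1,\sigma_J)$ with $I\neq J$ span a $4$-cycle. Then there are generators $\sigma_K,\sigma_L$ with
\[
\sigma_I\sigma_K=\sigma_J\sigma_L,
\]
so the cycle is labelled by a relator of length four, namely $\sigma_I\sigma_K\sigma_L\sigma_J$.

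For the ``if'' direction, I would use the defining relations directly.
\begin{itemize}
\item If $I\perp J$, relation (ii) gives $\sigma_I\sigma_J=\sigma_J\sigma_I$, which yields the cycle $1,\sigma_I,\sigma_I\sigma_J,\sigma_J$.
\item If $I\subset J$, relation (iii) gives $\sigma_I\sigma_J=\sigma_J\sigma_{I'}$ with $I'=\omega_J I\omega_J^{-1}\in\mathcal I$. This yields the cycle $1,\sigma_I,\sigma_I\sigma_J=\sigma_J\sigma_{I'},\sigma_J$.
\end{itemize}

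The ``only if'' direction carries the real content, and I expect it to be the main obstacle. The plan is to use the quandle-product normal form for $C_W$: Genevois's theory gives reduced words together with a solution to the word problem in which the only moves are the commutation and nesting-conjugation relations. I would first observe that $\sigma_I\sigma_K$ is a reduced word of length two, since $\mathscr{C}_W$ has no triangles and $\sigma_I\neq\sigma_K$ (otherwise the cycle degenerates). The equality $\sigma_I\sigma_K=\sigma_J\sigma_L$ then says two reduced words represent the same element. By the normal form theorem, such words are related by a sequence of elementary swaps of adjacent letters, each of type (ii) or (iii). For length-two words a single swap must carry $\sigma_I\sigma_K$ to $\sigma_J\sigma_L$. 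This forces $J=K$ together with $I\perp J$, or $J=K$ together with nesting. It also shows the $4$-cycle is a translate of a defining one, which is exactly the ``moreover'' clause.

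The delicate point is justifying that every equality between reduced words arises from such swaps. This is precisely \cite[Theorem~4.12, Lemma~4.13]{Genevois_3} for quandle products. I would invoke it through the identification of $C_W$ as a cactus product with $\mathbb{Z}_2$ factors \cite[Example~8.12]{Genevois_3}, and check that the quandle operation there is $I\mapsto\omega_J I\omega_J^{-1}$ on nested pairs, matching relation (iii).
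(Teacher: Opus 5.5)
The paper gives no independent proof of this statement. It quotes \cite[Lemma~4.15]{Genevois_3} and applies it because $C_W$ is a cactus (hence quandle) product of copies of $\mathbb{Z}_2$. You instead derive the lemma from a normal-form theorem, and that derivation is sound as far as it goes.

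\begin{itemize}
\item The ``if'' direction is immediate from the relations.
\item On a reduced two-letter word $\sigma_A\sigma_B$, at most one move applies, because orthogonal and nested are mutually exclusive for nonempty sets. That move is an involution, so any chain of moves just alternates between two words. This is what justifies your ``single swap'', and you should say it explicitly.
\item There is one slip: in the nested case you cannot conclude $J=K$. If $K\subsetneq I$, then $\sigma_I\sigma_K=\sigma_{\omega_IK\omega_I}\sigma_I$, so $J=\omega_IK\omega_I\subset I$ and $L=I$. The conclusion still holds, since this $4$-cycle is the defining one of the relation $\sigma_J\sigma_I=\sigma_I\sigma_{\omega_IJ\omega_I}$.
\end{itemize}

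The genuine gap is the step you yourself flagged. You need the statement that two reduced words are equal in $C_W$ only if they differ by moves of type (ii)/(iii). That is not what \cite[Theorem~4.12, Lemma~4.13]{Genevois_3} provide. As this paper uses them, they give that $\mathscr C_W$ is quasi-median and that its cliques lie in cosets of factors, i.e.\ that $\mathscr C_W$ is triangle-free and hence median. Medianness says nothing about which length-four words are relators, and ``every $4$-cycle is a translate of a defining one'' is exactly the content of the lemma. So, as written, the ``only if'' direction is unsupported.

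You can fix this in one of two ways. Either cite an actual normal-form theorem for quandle products (or Lemma~4.15 itself, as the paper does), or argue directly. A self-contained direct argument goes as follows.

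\begin{itemize}
\item The assignment $\sigma_I\mapsto\tau_{W_I}\,\omega_I$ defines a homomorphism $C_W\to\widehat W\rtimes W$, where $w$ acts by $\tau_U\mapsto\tau_{wUw^{-1}}$. The three relation types check directly, using $\omega_{\omega_JI\omega_J}=\omega_J\omega_I\omega_J$ for $I\subset J$.
\item Then $\sigma_I\sigma_K=\sigma_J\sigma_L$ forces $\tau_{W_I}\tau_{\omega_IW_K\omega_I}=\tau_{W_J}\tau_{\omega_JW_L\omega_J}$ in the right-angled Coxeter group $\widehat W$.
\item Since $K\neq I$, $L\neq J$ and $I\neq J$, abelianizing gives $W_J=\omega_IW_K\omega_I$ and $W_I=\omega_JW_L\omega_J$. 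Hence $\tau_{W_I}$ and $\tau_{W_J}$ commute.
\item The retraction of Remark~\ref{rmk:rho_A} onto $\langle\tau_{W_I},\tau_{W_J}\rangle$ then forces $m_{W_IW_J}=2$. For standard parabolics this means $I\perp J$ or $I,J$ nested.
\item The two displayed equations pin down $K$ and $L$: $K=J$ unless $J\subsetneq I$, in which case $K=\omega_IJ\omega_I$, and symmetrically for $L$. This gives the ``moreover'' clause.
\end{itemize}
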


\subsection{Links, cube complexes and quasilines}

Let $X$ be a cube complex.
For a vertex $v \in X$, the {\em link} of $v$ in $X$ is the simplicial complex whose vertices correspond to the edges incident to $v$, where a set of $k+1$ vertices spans a $k$-simplex if and only if the corresponding $k+1$ edges are contained in a common $(k+1)$-cube of $X$.
A simplicial complex $L$ is called {\em flag} if, whenever $v_0, \dots, v_k$ are vertices of $L$ such that $\{v_i, v_j\}$ is an edge for each $i \neq j$, the vertices $v_0, \dots, v_k$ span a $k$-simplex of $L$.
A cube complex is {\em non-positively curved} if the link of every vertex is a flag simplicial complex.
A simply connected non-positively curved cube complex is called a {\em \text{CAT(0)} cube complex}.
A {\em (cubical) quasiline} is a $\text{CAT(0)}$ cube complex that is quasi-isometric to $\mathbb{R}$.

Let $Y$ be a subcomplex of $X$. We say that $Y$ is {\em convex} if $Y$ is connected and every
combinatorial geodesic in $X^{(1)}$ joining two vertices of $Y$
is contained in $Y^{(1)}$.
A subcomplex $L$ of a simplicial complex $K$ is {\em full} if every collection of vertices of $L$ spanning a simplex in $K$ also spans a simplex in $L$.
A virtually abelian subgroup $A\leq G$ is called \emph{highest} if no finite-index subgroup of $A$ is contained in a virtually abelian subgroup of $G$ of strictly larger rank.

\begin{theorem}\cite[Theorem 3.6]{Wise_Woodhouse}
\label{thm:wise_woodhouse}
    Let $G$ act properly and cocompactly on a \text{CAT(0)} cube complex $\widetilde{X}$. Let $A$ be a highest virtually abelian subgroup of $G$ and let $p=\rank(A)$, where the rank of a virtually abelian
group is the rank of a finite-index free abelian subgroup. Then $A$ acts properly and cocompactly on a convex subcomplex $\widetilde{Y}\subset \widetilde{X}$ such that $\widetilde{Y}\cong \prod_{i=1}^p C_i$ where each $C_i$ is a quasiline.
\end{theorem}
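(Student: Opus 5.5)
The plan is to reduce to the CAT(0) flat torus theorem, then use the highest hypothesis to upgrade from a flat to a convex, cocompact cubical subcomplex, and finally read off the product structure from the essential core.

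\emph{Step 1 (reduction and a flat).} Pass to a finite-index subgroup $A'\cong\mathbb{Z}^p$ of $A$ that is normal in $A$. The action of $G$ on $\widetilde{X}$ with its CAT(0) metric is proper and cocompact. So the flat torus theorem~\cite[Theorem~II.7.1]{Bridson_Haefliger} applies: $\mathrm{Min}(A')$ is nonempty and splits as $F\times Z$, where $F\cong\mathbb{E}^p$ and $A'$ acts on it by translations cocompactly. By Ruane's theorem the centralizer $C_G(A')$ acts cocompactly on $\mathrm{Min}(A')$, and passing to $A$ changes things only by finite index.

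\emph{Step 2 (using that $A$ is highest).} This is where I expect the main difficulty: one must show that $Z$ can be taken bounded, or at least that the relevant piece of $\mathrm{Min}(A')$ lies in a bounded neighbourhood of one flat. I would argue by contradiction. If $Z$ were unbounded, then $C_G(A')/A'$ would act cocompactly on the unbounded CAT(0) space $Z$. The goal is then an element of $C_G(A')$ acting on $Z$ with infinite order, which would yield a virtually abelian subgroup of rank $p+1$ containing $A'$ and contradict highestness. Since $\widetilde{X}$ is a cube complex and $G$ acts by cubical automorphisms, I would first try the following: replace $Z$ by a cubical convex subcomplex, then use the Caprace--Sageev essential core and hyperplane skewering to find an element of $C_G(A')$ that skewers a hyperplane of $Z$, which therefore acts hyperbolically. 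If this fails, I would fall back on the semisimplicity of cubical automorphisms (Haglund) to at least obtain a hyperbolic element.

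\emph{Step 3 (cubical convex hull and product).} Once $F$ is known to be coarsely all of $\mathrm{Min}(A')$, I take $\widetilde{Y}_0$ to be the combinatorial convex hull of a finite neighbourhood of $F$. Using that only finitely many $A'$-orbits of hyperplanes cross $F$ and that hyperplanes are convex, I would show $\widetilde{Y}_0$ lies in a bounded neighbourhood of $F$. It follows that $A$ acts on $\widetilde{Y}_0$ properly and cocompactly, after averaging over $A/A'$ so that the subcomplex is $A$-invariant. Next, pass to the essential core $\widetilde{Y}$ of $\widetilde{Y}_0$, which is again convex, $A$-invariant and cocompact, and decompose it via Caprace--Sageev into irreducible factors $\prod_j Y_j$. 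Each $Y_j$ is quasi-isometric to a Euclidean factor of $F$ and carries a cocompact action of a finite-index subgroup of $A$. An irreducible essential factor cannot be quasi-isometric to $\mathbb{E}^k$ with $k\ge 2$: a rank-rigidity argument gives either a product splitting or a rank-one contracting element, and $\mathbb{E}^k$ has no contracting elements. Hence each factor is a quasiline. Counting growth, or comparing with $F$, shows there are exactly $p$ factors, giving $\widetilde{Y}\cong\prod_{i=1}^p C_i$.
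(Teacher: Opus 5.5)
The paper gives no proof of this statement; it quotes it from Wise--Woodhouse, so your sketch has to stand on its own. It has a genuine gap in Step~3. You claim that the combinatorial convex hull $\widetilde{Y}_0$ of a neighbourhood of $F$ lies in a bounded neighbourhood of $F$. This does not follow from what you have at that point, namely boundedness of $Z$ for your chosen $A'$, finitely many $A'$-orbits of hyperplanes crossing $F$, and convexity of hyperplanes.

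Here is a counterexample to that implication. Let $G=\mathbb{Z}^2\rtimes\langle r\rangle$ act on the square tiling of $\mathbb{R}^2$, where $r(x,y)=(y,x)$, and take $A=A'=\langle g\rangle$ with $g(x,y)=(y+1,x+1)$.
\begin{itemize}
\item The displacement of $g$ at $(x,y)$ is $\sqrt{2+2(x-y)^2}$. So $\mathrm{Min}(g)$ is exactly the diagonal $F$, and $Z$ is a point.
\item Only two $\langle g\rangle$-orbits of hyperplanes cross $F$.
\item Yet every vertical and horizontal hyperplane crosses $F$, so no halfspace contains a neighbourhood of $F$, and the hull is all of $\mathbb{R}^2$.
\item In fact no convex subcomplex is $\langle g\rangle$-cocompact, since convex subcomplexes here are products of intervals.
\end{itemize}
Of course $A$ is not highest: $g^2=t_{(2,2)}\in\mathbb{Z}^2$ and $\mathrm{Min}(g^2)=\mathbb{R}^2$. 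But your Step~2, run on $A'=\langle g\rangle$, does not detect this, and neither do the tools you list for Step~3. The underlying issue is that CAT(0) convexity of $\mathrm{Min}(A')$ says nothing about quasiconvexity of $F$ in the combinatorial $\ell^1$ metric. Your bounded-hull claim is equivalent to that $\ell^1$ quasiconvexity, by Haglund's bounded-hull theorem in this locally finite, finite-dimensional setting.

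So highestness has to be used a second time, at the cubical level. You need to show that, for highest $A$, the hyperplanes crossing $F$ cannot organize into more than $p$ mutually transverse directions (vertical versus horizontal lines above). A natural route is to pass to a suitable finite-index $A''\le A'$ and show that otherwise $\mathrm{Min}(A'')$ acquires an unbounded $Z$-factor. In particular, Step~2 must be applied to every finite-index subgroup, not to one fixed $A'$. This cubical step is the real content of the Wise--Woodhouse theorem, and it is missing from your proposal.

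Two smaller points.
\begin{itemize}
\item In Step~2, Haglund's semisimplicity only classifies elements; it does not produce a hyperbolic one. A clean route: by the splitting part of the flat torus theorem, a finite-index subgroup of $C_G(A')$ is $A'\times H'$, and $H'$ then acts geometrically on $Z$. Combine this with Swenson's theorem that infinite CAT(0) groups contain elements of infinite order.
\item The rank-rigidity theorem you invoke assumes no fixed point at infinity. An abelian group translating a line violates this, so the quasiline step in Step~3 also needs more care.
\end{itemize}
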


\subsection{Yu's embedding}\label{subsec:Yus-embedding}

Following Yu's construction~\cite{Yu}, we describe the right-angled Coxeter group as follows.
Let
\[
\mathcal S \ :=\ \{ wW_Iw^{-1}\mid w\in W,\ I\in\mathcal I\},
\]
where $\mathcal I$ is the collection of nonempty spherical irreducible subsets of $S$.
For each $U,V\in \mathcal{S}$ define $m_{UV}$ by
\[
m_{UV}\ =\
\begin{cases}
1 & \text{if } U=V,\\
2 & \text{if } U\subseteq V \text{ or } V\subseteq U,\\
2 & \text{if } U\cap V=\{1\}\ \text{and}\ U\subseteq Z_W(V),\\
\infty & \text{otherwise},
\end{cases}
\]
where $Z_W(V)$ denotes the centralizer of $V$ in $W$.
Let $\widehat{W}$ be the right-angled Coxeter group with generating set
$\{\tau_U\mid U\in\mathcal S\}$ and defining relations
\[
\tau_U^2=1 \ \ (U\in\mathcal S),
\qquad
[\tau_U,\tau_V]=1 \ \ \text{whenever } m_{UV}=2.
\]
Note that $m_{UV}=2$ if and only if $U$ and $V$ are orthogonal or nested.

For a Coxeter system $(W,S)$, Yu constructed a monomorphism $PC_W\hookrightarrow \widehat{W}$, which we denote by
\[
\Phi:PC_W\hookrightarrow \widehat{W}.
\]
Concretely, if $x\in PC_W$ is represented by a word
\[
x=\sigma_{I_1}\sigma_{I_2}\cdots \sigma_{I_m},
\]
and we set $u_0:=1$ and
\[
u_{k-1}:=\omega_{I_1}\omega_{I_2}\cdots \omega_{I_{k-1}}\in W
\qquad (1\le k\le m),
\]
then
\begin{align*}
   \Phi(x)
    &=
    \prod_{k=1}^m \tau_{\,u_{k-1}W_{I_k}u_{k-1}^{-1}}\\
    &=
\tau_{W_{I_1}}
\ \tau_{\omega_{I_1}W_{I_2}\omega_{I_1}}
\ \tau_{(\omega_{I_1}\omega_{I_2})W_{I_3}(\omega_{I_2}\omega_{I_1})}
\ \cdots\
\tau_{(\omega_{I_1}\cdots \omega_{I_{m-1}})\,W_{I_m}\,(\omega_{I_{m-1}}\cdots \omega_{I_1})}. 
\end{align*}
See \cite[Lemma~2.7]{Yu} for the details.

\subsection{Van Kampen diagrams}
A {\em map} $M$ is a finite $2$-dimensional cell complex embedded in $\mathbb{R}^2$ consisting of a finite collection of $0$-cells ({\em vertices}), $1$-cells ({\em edges}) and $2$-cells ({\em faces}).
If $M \subseteq \mathbb{R}^2$, then the boundary of $M$ is denoted by $\partial M$.
If $\pi$ is a face of $M$, we denote the boundary of $\pi$ by $\partial \pi$.
If a vertex (or an edge) of $M$ is not a vertex (or an edge, respectively) of $\partial M$, then it is called the {\em interior} vertex (or the {\em interior} edge, respectively).
A word is {\em reduced} if it has no subword of the form $zz^{-1}$ or $z^{-1}z$, and it is {\em cyclically reduced} if each of its cyclic permutations is reduced.
Now we discuss van Kampen diagrams.	

\begin{definition}[$\mathcal{R}$-diagrams, van Kampen Diagrams]
Let $G=\langle X \svert \mathcal{R} \rangle$ be a group presentation, where $\mathcal{R}$ is {\em symmetrized}, meaning that all its elements are cyclically reduced and it is closed under taking inverses and cyclic permutations.
Then a map $\Delta$ is called an {\em $\mathcal{R}$-diagram} if it satisfies the following properties.
\begin{enumerate}[\rm (i)]
	\item Let $\Delta_{(1)}$ denote the set of all oriented edges of $\Delta$.
	Each oriented edge of $\Delta$ is labeled by an element $x \in X$, and the oppositely oriented edge is labeled by $x^{-1}$, and hence there exists a labeling function $\Lab: \Delta_{(1)} \rightarrow X \cup X^{-1}$ such that $\Lab(e^{-1})=\Lab(e)^{-1}$ for all $e \in \Delta_{(1)}$.
	\item For each face $\pi$ of $\Delta$, every choice of a vertex on
    $\partial\pi$, and either orientation of $\partial\pi$, the label read
    along $\partial\pi$ from that vertex belongs to $\mathcal R$.
\end{enumerate}
If an $\mathcal{R}$-diagram $\Delta$ is connected and simply connected, then it is called a {\em van Kampen diagram} over $G=\langle X \svert \mathcal{R} \rangle$.
For any $\mathcal{R}$-diagram $\Delta$, the number of faces of $\Delta$ is called the {\em area} of $\Delta$, denoted by $Area(\Delta)$.	
\end{definition}

Let $\Delta$ be a van Kampen diagram.
Two distinct faces $\pi_1$ and $\pi_2$ of $\Delta$ form a
\emph{reducible pair} if they share an edge $e$ and, after reading
$\partial\pi_1$ from $e$ in one direction and $\partial\pi_2$ from
$e^{-1}$ in the opposite direction, the resulting boundary words are
inverse to each other letter for letter.
If there is no reducible pair in $\Delta$, then $\Delta$ is called a {\em reduced van Kampen diagram}.

The following fundamental result is van Kampen's Lemma.

\begin{theorem}[van Kampen's Lemma]
\label{thm:van_Kampen_Lemma}
Suppose $G=\langle X \svert \mathcal{R} \rangle$, where $\mathcal R$ is a
symmetrized set of cyclically reduced words.
Let $u$ be a cyclically reduced word in $X$. Then $u$ represents the identity element in $G$
if and only if
there exists a reduced van Kampen diagram $\Delta$ over $G=\langle X \svert \mathcal{R} \rangle$
such that
$(\Lab(\partial \Delta)) \equiv (u)$.
\end{theorem}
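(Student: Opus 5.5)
This is the classical lemma of van Kampen (see Lyndon--Schupp, Chapter~V), and I would prove it by the standard two-direction argument, treating the relators of $\mathcal R$ as closed words and working in the free group $F(X)$.

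\emph{Sufficiency.} Suppose $\Delta$ is a van Kampen diagram with $\Lab(\partial\Delta)\equiv u$. I would argue by induction on $Area(\Delta)$ that the boundary label of any van Kampen diagram lies in the normal closure $N$ of $\mathcal R$ in $F(X)$.
\begin{enumerate}[\rm (1)]
\item If the area is $0$, then $\Delta$ is a finite planar tree, since it is connected and simply connected. Its boundary word then freely reduces to the empty word, so it lies in $N$.
\item Otherwise, choose a face $\pi$ with an edge on $\partial\Delta$. Write the boundary label, read from a suitable base point, as $\Lab(\partial\Delta)=a\,b\,c$, where $b$ is a subpath lying on $\partial\pi$. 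Let $r=b\,d$ be the label of $\partial\pi$; by the definition of an $\mathcal R$-diagram, $r\in\mathcal R$.
\item Deleting the open face $\pi$ together with the open boundary edges it shares with $\partial\Delta$ yields a diagram, possibly after splitting at cut vertices into several simply connected pieces. Its boundary label is $a\,d^{-1}c$ and its area is $Area(\Delta)-1$, so by induction $a\,d^{-1}c\in N$.
\item Since $a\,b\,c=(a\,r\,a^{-1})\,(a\,d^{-1}c)$ in $F(X)$, we get $u\in N$, that is, $u=1$ in $G$.
\end{enumerate}
Alternatively, the diagram gives a cellular map of a disc into the presentation complex whose boundary loop reads $u$, so $u$ is null-homotopic and hence trivial in $\pi_1=G$.

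\emph{Necessity.} Suppose $u=1$ in $G$. Then $u$ is freely equal to a product $\prod_{i=1}^m g_i r_i g_i^{-1}$ with $r_i\in\mathcal R$ and $g_i\in F(X)$.
\begin{enumerate}[\rm (1)]
\item Build the ``bouquet of lollipops'': a wedge of $m$ paths labelled $g_i$ from a common base point, each ending in a disc whose boundary reads $r_i$. This planar $\mathcal R$-diagram is connected and simply connected, and its boundary label is the unreduced word $\prod g_i r_i g_i^{-1}$.
\item Perform the free reductions that turn this word into $u$ one at a time. Each cancellation of adjacent letters $zz^{-1}$ on the boundary is realised by folding the two corresponding boundary edges together. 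If their far endpoints already coincide, the fold instead encloses a bounded region, which is discarded together with everything inside it.
\item Each fold preserves planarity, connectivity and simple connectivity, and never increases area. Since $u$ is cyclically reduced, the process ends with a van Kampen diagram whose boundary label is exactly $u$.
\end{enumerate}

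\emph{Eliminating reducible pairs.} This is the step I expect to need the most care. If faces $\pi_1,\pi_2$ form a reducible pair along an edge $e$, cut out the union of the two open faces and the shared edge. The two remaining boundary arcs carry the same label, so identify them by folding, as in the necessity step. This yields a diagram with the same boundary label and area decreased by $2$.

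The delicate point is that $\partial\pi_1$ and $\partial\pi_2$ may share further vertices or edges, so the fold may pinch off spheres or create cut points. The fix is to discard the enclosed bounded regions, keep the component containing $\partial\Delta$, and check that its boundary label is still $u$. Because the area strictly decreases, finitely many such moves produce a reduced van Kampen diagram with boundary label $u$, completing the proof.
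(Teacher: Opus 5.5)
The paper gives no proof of this statement: it is quoted as the classical van Kampen lemma (as in Lyndon--Schupp, Chapter~V) and used as a black box. So you are not taking a different route from an argument in the paper. You are supplying the standard textbook proof, and your outline is sound in that form.

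The paper only ever uses the easy direction. In the proof of Lemma~\ref{lem:An-commutativity} it draws diagrams with boundary label $[a_i,a_j]$ and concludes that $[a_i,a_j]=1$. For that, your sufficiency induction is all that is needed, as is the one-line argument that a diagram gives a null-homotopy of $u$ in the presentation complex. Reducedness of the diagram plays no role there. The delicate elimination of reducible pairs, which you correctly flag and handle with the usual device of discarding pinched-off bubbles and keeping the part carrying $\partial\Delta$, matters only for the converse.

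One small repair is needed in step (3) of your sufficiency argument. Remove only the open face $\pi$ and a single open edge $e\subseteq\partial\pi\cap\partial\Delta$, i.e.\ take $b=e$. Since $e$ occurs exactly once on $\partial\pi$, the complementary path $d$ still joins its endpoints. The result is then connected and simply connected, with boundary label exactly $a\,d^{-1}c$.

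If you instead delete every edge that $\pi$ shares with $\partial\Delta$, the complex can split into several pieces, or leave isolated vertices. Its boundary is then no longer literally $a\,d^{-1}c$, and the identity $abc=(a r a^{-1})(a\,d^{-1}c)$ would need an extra argument expressing $a\,d^{-1}c$ as a product of conjugates of the pieces' boundary labels.
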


\section{Bounding the rank of free abelian subgroups of generalized cactus groups}
\label{sec:maximal_rank}

In this section, we prove the only if part of Theorem~\ref{thm:maximal_rank}, which determines
the maximal rank of free abelian subgroups of $C_W$.
We first show that such a maximal rank exists.

\begin{lemma}
\label{lem:rank_bounded}
Let $(W, S)$ be a finite Coxeter system. Then the ranks of free abelian subgroups of $C_W$ are bounded above.
\end{lemma}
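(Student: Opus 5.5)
The plan is to realize $C_W$ as a group acting properly and cocompactly on a finite-dimensional CAT(0) cube complex. A free abelian subgroup of rank $r$ then acts properly on a flat of dimension $r$, and $r$ is bounded by the dimension of the complex.

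First, recall from the discussion of $\mathscr{C}_W$ that the Cayley graph with respect to $\{\sigma_I \svert I\in\mathcal I\}$ is a median graph. Its cube completion $X$ is therefore a CAT(0) cube complex. Since $W$ is finite, $S$ is finite, so $\mathcal I$ is finite. Hence $\mathscr{C}_W$ is locally finite, with every vertex of degree $|\mathcal I|$. The left action of $C_W$ on $\mathscr{C}_W$ is free and transitive on vertices, and it extends to an action on $X$ that is proper and cocompact: there is one vertex orbit, and only finitely many cubes contain a given vertex.

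Next, I bound the dimension. A $d$-cube containing the vertex $h$ has $d$ distinct edges at $h$. These edges are of the form $(h,h\sigma_I)$, and there are only $|\mathcal I|$ of them, so $\dim X\le |\mathcal I|$. By Lemma~\ref{lem:squares} one can sharpen this: the labels must pairwise be orthogonal or nested. Finiteness is all that is needed here, though.

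Finally, let $A\le C_W$ be free abelian of rank $r$. Enlarge $A$ to a highest virtually abelian subgroup $A'$ with $\rank(A')\ge r$. Such an $A'$ exists once ranks of virtually abelian subgroups are bounded, so to avoid circularity I will instead argue directly as follows. Since $C_W$ acts properly and cocompactly on the CAT(0) space $X$, the flat torus theorem \cite[Theorem~II.7.1]{Bridson_Haefliger} says that $\mathbb{Z}^r\cong A$ preserves an isometrically embedded copy of $\mathbb{E}^r$ and acts on it cocompactly. An isometric copy of $\mathbb{E}^r$ cannot sit inside a CAT(0) cube complex of dimension less than $r$, because the covering dimension of a finite-dimensional cube complex is its cubical dimension. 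This gives $r\le\dim X\le|\mathcal I|$.

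The only point needing care is justifying that the action is proper and cocompact and that $X$ is finite-dimensional; both follow from the finiteness of $\mathcal I$. I expect no serious obstacle. The step that would be most delicate, if challenged, is the dimension comparison for the flat, and that is standard.
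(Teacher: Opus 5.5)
Your proof is correct, but it follows a different route from the paper's. The paper never looks at flats here. It uses the same finite-dimensional cube complex only to invoke Wright's theorem that a group acting properly on a finite-dimensional CAT(0) cube complex has finite asymptotic dimension. It then concludes $p\le\operatorname{asdim}(C_W)<\infty$ from $\operatorname{asdim}(\mathbb{Z}^p)=p$ together with monotonicity of asymptotic dimension under passing to subgroups (via Dranishnikov--Smith).

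You instead use the flat torus theorem to produce an $A$-invariant isometric copy of $\mathbb{E}^r$ in $X$, and then count topological dimension. Your route is more elementary and geometric. It yields the explicit bound $r\le\dim X\le|\mathcal I|$ directly, and it is the same flat-torus mechanism the introduction offers as motivation.

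It does rely on two hypotheses you should state explicitly, both available because $\mathcal I$ is finite:
\begin{itemize}
\item Cocompactness, so that every element acts semisimply \cite[Proposition~II.6.10]{Bridson_Haefliger} and the flat torus theorem applies.
\item Local finiteness, so that the CAT(0) metric topology on $X$ agrees with the CW topology. Then the covering dimension of $X$ is its cubical dimension, and the closed subspace $\mathbb{E}^r$ cannot exceed it.
\end{itemize}

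The paper's route needs only a proper action, with no semisimplicity, cocompactness or point-set topology of flats. The price is importing Wright's considerably deeper asymptotic-dimension theorem.

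You were right to abandon the ``highest virtually abelian subgroup'' detour, since its existence presupposes the boundedness being proved.
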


\begin{proof}
Since $W$ is finite, $C_W$ acts properly by isometries on a finite-dimensional $\text{CAT(0)}$ cube complex.
Then, by~\cite[Corollary~4.11]{Wright}, we have $\operatorname{asdim}(C_W) < \infty$.

Let $A$ be a free abelian subgroup of rank $p$.
Then $A \cong \mathbb{Z}^p$, and hence $\operatorname{asdim}(A)=p$.
By~\cite[Theorem~2.11]{Dranishnikov_Smith}, the asymptotic dimension of a
countable group is the supremum of the asymptotic dimensions of its finitely
generated subgroups.
Hence, $p=\operatorname{asdim}(A) \le \operatorname{asdim}(C_W) < \infty$.
Therefore, the ranks of free abelian subgroups of $C_W$ are bounded above.
\end{proof}

The only if part of Theorem~\ref{thm:maximal_rank} follows from the following theorem:

\begin{theorem}
\label{thm:maximal_rank_only_if}
Let $(W,S)$ be a finite Coxeter system with connected components $S_1,\dots,S_k$, and let $n_i=|S_i|$ for each $i$. 
If the generalized cactus group $C_W$ contains a free abelian subgroup of rank $r$, then $r\leq \sum_{i=1}^k \lfloor n_i/2 \rfloor$.
\end{theorem}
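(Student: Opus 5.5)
Since $W$ is finite, $PC_W$ has finite index in $C_W$, and $C_W$ acts properly and cocompactly on the cube completion $\widetilde X$ of the median graph $\mathscr C_W$. By Lemma~\ref{lem:rank_bounded} the rank of free abelian subgroups is bounded. So if $C_W$ contains $\mathbb Z^r$, it also contains a highest virtually abelian subgroup $A$ of rank $p\ge r$. Theorem~\ref{thm:wise_woodhouse} then gives a convex subcomplex $\widetilde Y\cong\prod_{i=1}^p C_i$ on which $A$ acts cocompactly, and each quasiline $C_i$ is unbounded. Choose a vertex $h\in\widetilde Y$ and an edge $e_i$ of $C_i$ at $h$ for each $i$. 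By convexity of the product, for $i\neq j$ the edges $e_i$ and $e_j$ span a square. Write the labels as $e_i=(h,h\sigma_{I_i})$. By Lemma~\ref{lem:squares}, the sets $I_1,\dots,I_p\in\mathcal I$ are then pairwise orthogonal or nested. This alone is not enough, since a chain of nested sets can be long. The real content is to show that, because each $C_i$ is a quasiline and the cubical hyperplanes of different factors cross in a periodic way, the labels can be chosen so that no $I_i$ is a singleton, or at least so that the family behaves like disjoint subsets of size at least two.

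\textbf{Step 1: rule out long nested chains.} The more robust route is to work in $PC_W$ via Yu's embedding $\Phi$ into $\widehat W$. Let $A'=A\cap PC_W$, which has finite index. Then $\Phi(A')$ is a free abelian subgroup of the right-angled Coxeter group $\widehat W$. In a right-angled Coxeter group, $\mathbb Z^p$ is virtually contained in a product $\widehat W_{U_1}\times\cdots\times\widehat W_{U_p}$ of special subgroups. Here each $U_i$ is a set of generators that is not a clique, the $U_i$ are pairwise joined (every element of $U_i$ commutes with every element of $U_j$), and each $U_i$ contains two non-commuting generators $\tau_{P_i},\tau_{Q_i}$.

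\textbf{Step 2: extract a disjoint family in $S$.} Let $P_i,Q_i\in\mathcal S$ be the non-commuting parabolic pair from Step 1. They are neither nested nor orthogonal, but every member of $\{P_i,Q_i\}$ is orthogonal or nested with every member of $\{P_j,Q_j\}$. The key combinatorial lemma to prove is then: in a finite Coxeter group, such a family of $p$ ``crossing pairs'' forces $p\le\sum_i\lfloor n_i/2\rfloor$. The intended mechanism runs as follows.
\begin{enumerate}[\rm (a)]
\item Put $R_i=\langle P_i,Q_i\rangle$. This is contained in a minimal parabolic subgroup, and since $P_i$ and $Q_i$ are not nested or orthogonal, that parabolic has rank at least $\rank P_i+1\ge 2$.
\item For $i\neq j$, the parabolics $R_i$ and $R_j$ should be either orthogonal, or nested in a way where one pair lies inside a single member of the other pair.
\item Assign to each $i$ a rank increment, namely the parabolic closure of the pair, which contributes two new ``dimensions'' relative to the parabolics below it.
\item Induct on the rank, or map everything to $W_{S_j}$ factor by factor.
\end{enumerate}
This should give $2p\le\sum_j n_j$ componentwise, i.e.\ at most $\lfloor n_j/2\rfloor$ pairs per component. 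Using the rank of parabolics defined earlier, which is well defined by the Qi citation, makes the counting conjugation-invariant.

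\textbf{Main obstacle.} The hardest step is the counting in Step 2(b)--(c), particularly the nested case. I would first show that if $R_j\subseteq R_i$ with $j\ne i$, then $R_j$ lies inside $P_i$ or inside $Q_i$: it commutes with or is nested in each of them, and it cannot be orthogonal to both unless it is orthogonal to $R_i$. I would then show that $P_i$ (say) has rank at most $\rank R_i-1$, and in fact at most $\rank R_i-2$ relative to the part that both $P_i$ and $Q_i$ must contain together. The worry is the case $\rank R_i=\rank P_i+1$, where the increment is only one. Handling it will likely require Lemma~\ref{lem:parabolic-right-descent} together with the irreducibility of $P_i$ and $Q_i$ to show that a single extra generator cannot support a nested chain that gains rank. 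If that fails, I would fall back to a type-by-type check using the explicit longest-element models of Section~\ref{subsec:finitecoxetergroups}.
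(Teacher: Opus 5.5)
\textbf{The gap is in Step 2, and it is fatal, not just unfinished.} The crossing-pair lemma you want for arbitrary members of $\mathcal S$ is false.

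Work in type $B_5$ with the signed-permutation model. Let $t_3$ be the sign change of the third coordinate, which is a conjugate of $s_1$. Set
$P_1=W_{I_{1,4}}$ (the signed permutations of coordinates $1,\dots,4$),
$Q_1$ the signed permutations of coordinates $\{1,2,5\}$ (a conjugate of $W_{I_{1,3}}$),
$P_2=\langle s_1\rangle$, $Q_2=\langle s_2\rangle$, $P_3=\langle t_3\rangle$ and $Q_3=\langle s_4\rangle$.

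Within each pair the two subgroups are neither nested nor orthogonal:
\begin{itemize}
\item[(1)] $P_1\cap Q_1$ is the group of signed permutations of $\{1,2\}$, which is nontrivial, while $t_3\notin Q_1$ and $t_5\notin P_1$.
\item[(2)] $s_1$ and $s_2$ do not commute.
\item[(3)] $s_4t_3s_4=t_4\neq t_3$.
\end{itemize}
Across pairs everything is compatible. We have $P_2,Q_2\le P_1\cap Q_1$ and $P_3,Q_3\le P_1$. Also $P_3,Q_3$ move only coordinates $3,4$, so they are orthogonal to $Q_1$, $P_2$ and $Q_2$.

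This gives $3>\lfloor 5/2\rfloor$ crossing pairs. Indeed $\tau_{P_1}\tau_{Q_1}$, $\tau_{P_2}\tau_{Q_2}$, $\tau_{P_3}\tau_{Q_3}$ generate $\mathbb Z^3\le\widehat W$, whereas the statement you are proving says $C_{B_5}$ contains no $\mathbb Z^3$. So once you replace $A$ by $\Phi(A')\le\widehat W$ and keep only the non-commutation pattern of the $\tau$'s, no argument using just that pattern can give the bound. You have discarded exactly the information that singles out $\Phi(PC_W)$ inside $\widehat W$. (Separately, Step 1 needs ``virtually conjugate into'' rather than ``virtually contained in'', but that is minor.)

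\textbf{The missing idea is in the route you abandoned at the start.} The fix is not to pass to $\widehat W$ but to take two edges per factor instead of one.
\begin{itemize}
\item[(1)] Each quasiline $C_i$ has a vertex $v_i$ with two incident edges $e_i^-,e_i^+$ that do not span a square. Otherwise flagness and finite dimensionality force a maximal cube of $C_i$ to contain every edge at its vertices. Then $C_i$ would be a single cube, contradicting unboundedness.
\item[(2)] At $v=(v_1,\dots,v_m)$, the join $\{e_1^-,e_1^+\}*\cdots*\{e_m^-,e_m^+\}$ is a full subcomplex of $\operatorname{Lk}_{\mathscr A_W}(v)$. This holds because $\prod C_i$ is convex in $\mathscr A_W$, and by Sakai--Sakuma links of convex subcomplexes are full.
\item[(3)] By Lemma~\ref{lem:squares}, this link is the complex of connected subdiagrams of $\Gamma$ with nested-or-orthogonal adjacency.
\end{itemize}
Working in the link of a single vertex means every label is a standard parabolic conjugated by one common element. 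That rules out configurations like the one above, since two non-nested type-$B$ parabolics are never simultaneously standard.

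What remains is the tree count. Pick a member $P_1$ of maximal size. Every other pair then lies in a single component of one of four pieces: $P_1\cap Q_1$; $P_1$ minus the closed neighbourhood of $Q_1$; $Q_1$ minus the closed neighbourhood of $P_1$; or the complement of the closed neighbourhood of $P_1\cup Q_1$. These pieces miss at least two vertices of $\Gamma_i$, so induction yields $q\le 1+(n_i-2)/2$.
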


\begin{proof}
Since $W$ is finite, $PC_W$ has finite index in $C_W$. Hence, the maximal ranks of the free abelian subgroups of $PC_W$ and $C_W$ are the same.

Let $\mathscr{C}_W$ be the Cayley graph with respect to the generating set
$\{\sigma_I\}_{I \in \mathcal{I}}$.
By \cite[Theorem~4.12]{Genevois_3}, $\mathscr{C}_W$ is quasi-median.
Since all factor groups from the corresponding quandle product structure are isomorphic to $\mathbb Z_2$, it follows from \cite[Lemma~4.13]{Genevois_3} that $\mathscr{C}_W$ is triangle-free.
Hence, by \cite[Theorem~4.2]{Genevois_3}, it is a median graph.
Let $\mathscr{A}_W$ be a cube-completion of $\mathscr{C}_W$. Since $\mathscr{C}_W$ is a median graph, $\mathscr{A}_W$ is a $\text{CAT(0)}$ cube complex.
Moreover, since $C_W$ acts properly and cocompactly on a median graph, this action of $C_W$ on $\mathscr{C}_W$ extends a cubical action on $\mathscr{A}_W$, and remains proper and cocompact.

By Lemma~\ref{lem:rank_bounded}, there exists a free abelian subgroup
$H$ of $C_W$ of maximal rank. Let $m$ denote its rank.
By Theorem~\ref{thm:wise_woodhouse}, $H$ acts properly and cocompactly on a
convex subcomplex of $\mathscr{A}_W$ which is isomorphic to $\prod_{i=1}^m C_i$, where $C_i$ is a quasiline for each $i=1, \dots, m$. 

Since $S$ is finite, the collection $\mathcal{I}$ of nontrivial subsets of $S$ is finite.
Hence, each vertex of $\mathscr{C}_W$ is incident to $|\mathcal{I}|$ edges.
Since $\mathscr{A}_W$ is a cube-completion of $\mathscr{C}_W$, no new vertices or edges are added.
Therefore, if $\mathscr{A}_W$ contains a $k$-cube, then there are $k$ distinct edges of $\mathscr{C}_W$ incident to each vertex of this cube.
Thus, $k \le |\mathcal{I}|$, and hence $\dim \mathscr{A}_W \le |\mathcal{I}| < \infty$.
It follows that $\prod_{i=1}^m C_i$ is finite-dimensional, and therefore, each $C_i$ is finite-dimensional.

\begin{lemma}\label{lem:quasiline}
For each $i$, if every two edges of $C_i$ sharing a vertex span a square, then $C_i$ is itself a cube.
\end{lemma}

\begin{proof}[Proof of Lemma~\ref{lem:quasiline}]
Let $Q$ be a {\em maximal cube} of $C_i$, that is, a cube whose dimension is the largest among all cubes contained in $C_i$.
Such $Q$ always exists since $C_i$ is finite-dimensional.
Set $k=\dim Q$ and choose a vertex $v$ of $Q$.
First, we claim that every edge of $C_i$ incident to $v$ is contained in $Q$.
Assume that there exists an edge $e$ of $C_i$ incident to $v$ such that $e$ is not contained in $Q$. Let $e_1, \dots, e_k$ be the edges of $Q$ incident to $v$.
Since $C_i$ is $\text{CAT(0)}$, $\operatorname{Lk}_{\mathscr C_i}(v)$ is a flag simplicial complex. 
Therefore, the vertices of $\operatorname{Lk}_{\mathscr C_i}(v)$ corresponding to $e, e_1, \dots, e_k$ span a $k$-simplex, and hence the corresponding $k+1$ edges are contained in a common $(k+1)$-cube, which contradicts to the maximality of $Q$.
Thus, every edge incident to a vertex of $Q$ is contained in $Q$.

Next, we show that $C_i=Q$. Since $C_i$ is quasi-isometric to $\mathbb{R}$, it is connected.
If $C_i \neq Q$, then there exists a path joining a vertex to $Q$ to a vertex outside $Q$. Hence there exists an edge of this path with one endpoint in $Q$ and the other outside $Q$. Then this edge is incident to a vertex of $Q$, and therefore, it must be contained in $Q$, which is impossible.
Thus, $C_i=Q$, so $C_i$ itself is a cube.
\end{proof}

By Lemma~\ref{lem:quasiline}, $C_i$ contains two distinct edges $e_i^-$ and $e_i^+$ which share a vertex $v_i$ but do not span a square. Otherwise, $C_i$ would be a single cube, and hence bounded, contradicting the fact that $C_i$ is a quasiline.
Now, let $L$ be the link of the vertex $v=(v_1, \dots, v_m)$ in $\prod_{i=1}^m C_i$. Since the link of a vertex in a product cube complex is the join of the links of the corresponding vertices, it follows that 
$L$ contains a full subcomplex $\{e_1^-, e_1^+\} * \cdots * \{e_m^-, e_m^+\}$ that is the join of $m$ copies of $\mathbb{S}^0$.
Since $\prod_{i=1}^m C_i$ is isomorphic to a subcomplex of $\mathscr{A}_W$,
we may regard as $v$ as a vertex of $\mathscr{A}_W$.

Now, let $\Gamma$ be the Coxeter diagram of $(W, S)$, 
and let $\Gamma_1, \dots, \Gamma_k$ be connected components of $\Gamma$ with vertex sets $S_1, \dots, S_k$, respectively, that is, for each $i$, $\Gamma_i$ is the full subdiagram of $\Gamma$ spanned by $S_i$.
Recall that $\mathcal{I}$ is the collection of subsets $I \subset S$ which are spherical and irreducible. 
Since $W$ is finite, every standard parabolic subgroup $W_I$ is finite, and hence every subset $I\subseteq S$ is spherical.
Moreover, $W_I$ is irreducible if and only if the full subdiagram of $\Gamma$ spanned by $I$ is connected. 
Let $K_\Gamma$ denote $\operatorname{Lk}_{\mathscr A_W}(v)$.
Then the vertices of $K_\Gamma$ correspond to the edges of $\mathscr A_W$ incident to that vertex. 
Since the $1$-skeleton of $\mathscr A_W$ is $\mathscr C_W$, 
these edges are naturally labeled by the generators $\sigma_I$ for $I \in \mathcal{I}$. Therefore, the vertices of $K_\Gamma$
are naturally identified with the connected full subdiagrams of $\Gamma$.
In particular, each such subdiagram is contained in a unique connected
component $\Gamma_i$.
By Lemma~\ref{lem:squares}, two vertices of $K_\Gamma$ are
adjacent if and only if the corresponding subdiagrams are nested or
orthogonal.

For each $i$, let $K_{\Gamma_i}$ be the full subcomplex of $K_\Gamma$
spanned by the connected subdiagrams contained in $\Gamma_i$.
Since connected subdiagrams contained in distinct components of
$\Gamma$ are orthogonal, we have
\[
K_\Gamma=K_{\Gamma_1}*\cdots*K_{\Gamma_k}.
\]

The following lemma gives the upper bound for each connected component of $\Gamma$.

\begin{lemma}
\label{lem:tree_upper_bound}
For each $i=1, \dots, k$, suppose that $K_{\Gamma_i}$ contains the join of $q$ copies of $\mathbb{S}^0$ as a full subcomplex, then $q \le \lfloor n_i/2 \rfloor$.
\end{lemma}

\begin{proof}[Proof of Lemma~\ref{lem:tree_upper_bound}]
Since $\Gamma_i$ is a connected component of the Coxeter diagram of a finite Coxeter system, it is a finite tree with $n_i$ vertices.
We proceed by induction on $n_i$.
If $n_i=1$, then $K_{\Gamma_i}$ has only one vertex. Hence it contains no copy of $\mathbb{S}^0$, and therefore, $q=0$. Thus, $0=q \le \lfloor 1/2 \rfloor$.
If $n_i=2$, then $K_{\Gamma_i}$ has three vertices since the nonempty connected subtrees of $\Gamma_i$ are $\{s\}$, $\{t\}$ and $\{s, t\}$.
Since the join of $q$ copies of $\mathbb{S}^0$ has $2q$ vertices, we have $2q \le 3$.
Hence $q\le 1$, and therefore $q \le 1 = \lfloor 2/2 \rfloor$.

Suppose that $q\leq \lfloor |\Lambda|/2 \rfloor$ holds for $|\Lambda|<n_i$, and assume that $n_i \ge 3$.
Let $M$ be a full subcomplex of $K_{\Gamma_i}$ isomorphic to the join of $q$ copies of $\mathbb{S}^0$. 
Moreover, let $\{P_1, Q_1, \dots, P_q, Q_q\}$ be the vertex set of $M$, where $P_j$ and $Q_j$ are not adjacent for each $j=1, \dots, q$, and every vertex in $\{P_j, Q_j\}$ is adjacent to every vertex in $\{P_\ell, Q_\ell\}$ if $j \neq \ell$.
Suppose that all vertices of $M$ are contained in a proper connected subtree $\Lambda$ of $\Gamma_i$. Then, $|\Lambda|<n_i$.
Since the connected subtrees contained in $\Lambda$ form the vertex
set of $K_\Lambda$, and since two vertices are adjacent
in either complex if and only if the corresponding subtrees are nested
or orthogonal, the full subcomplex of $K_{\Gamma_i}$ spanned by the connected subtrees contained in $\Lambda$ is naturally isomorphic to $K_\Lambda$.
Since all vertices of $M$ are contained in $\Lambda$ and $M$ is full
in $K_{\Gamma_i}$, $M$ is also a full subcomplex of this copy of
$K_\Lambda$.
Hence, by the induction hypothesis,
\[
q \le \lfloor |\Lambda|/2 \rfloor \le \lfloor n_i/2 \rfloor.
\]
Thus we may assume that the vertices of $M$ are not all contained in any
proper connected subtree of $\Gamma_i$.
Choose $P_1$ among the vertices of $M$ so that $|P_1|$ is maximal.
Since $P_1$ and $Q_1$ are not adjacent, they are neither nested nor orthogonal.
Let $j \ge 2$, and let $R \in \{P_j, Q_j\}$. Then since $R$ is adjacent to $P_1$, the subtrees $R$ and $P_1$ are nested or orthogonal.
By the maximality of $|P_1|$, $P_1 \not\subset R$. Hence either $R \subset P_1$ or $R$ and $P_1$ are orthogonal.
Similarly, since $R$ is adjacent to $Q_1$, the subtrees $R$ and $Q_1$ are nested or orthogonal. If $Q_1 \subset R$, then since either $R \subset P_1$ or $R$ and $P_1$ are orthogonal, it follows that $Q_1 \subset P_1$ or $Q_1$ and $P_1$ are orthogonal, which is impossible.
Hence either $R \subset Q_1$ or $R$ and $Q_1$ are orthogonal.
Therefore, for $j \ge 2$, every $R \in \{P_j, Q_j\}$ is contained in one of the connected components of the following subgraphs:
\begin{enumerate}[\rm (i)]
\item $P_1 \cap Q_1$,
\item the subgraph of $P_1$ spanned by the vertices which are not in $Q_1$ and
are not adjacent to any vertex of $Q_1$,
\item the subgraph of $Q_1$ spanned by the vertices which are not in $P_1$ and
are not adjacent to any vertex of $P_1$,
\item the subgraph of $\Gamma_i$ spanned by the vertices which are neither in
$P_1 \cup Q_1$ nor adjacent to any vertex of $P_1 \cup Q_1$.
\end{enumerate}

Let $\mathcal{E}$ be the collection of all connected components of (i)--(iv).
If $P_j$ and $Q_j$, with $j \ge 2$, are contained in two different members of $\mathcal{E}$, then $P_j$ and $Q_j$ are orthogonal, and hence they are adjacent in $K_{\Gamma_i}$, which is impossible. Thus, each pair $\{P_j, Q_j\}$, with $j \ge 2$, is contained in the single member of $\mathcal{E}$.
For $E \in \mathcal{E}$, let $m_E$ be the number of pairs contained in $E$.
Then
\[
q=1+\sum_{E \in \mathcal{E}} m_E,
\]
where the term $1$ corresponds to the pair $\{P_1, Q_1\}$.

For each $E \in \mathcal{E}$, let $N_E$ be the subcomplex of $K_{\Gamma_i}$ spanned by the connected subtrees contained in $E$.
Moreover, let $M_E$ be the subcomplex of $M$ spanned by the vertices belonging to the pairs contained in $E$. Then $M_E \cong (\mathbb{S}^0)^{*m_E}$.
We claim that $M_E$ is a full subcomplex of $N_E$.
To see this, let $\Delta$ be a simplex of $N_E$ whose vertices lie in $M_E$.
Since $N_E \subset K_{\Gamma_i}$, $\Delta$ is a simplex of
$K_{\Gamma_i}$ whose vertices lie in $M_E\subset M$. Since $M$ is full in
$K_{\Gamma_i}$, $\Delta$ is a simplex of $M$. Finally, since $M_E$ is the full
subcomplex of $M$ spanned by these vertices, it follows that $\Delta$ is a simplex
of $M_E$. Hence $M_E$ is full in $N_E$.

Since $E$ is a proper connected subtree
of $\Gamma_i$,
by the induction hypothesis, $m_E \le  \lfloor |E|/2 \rfloor$, and hence
\[
q \le 1+\sum_{E \in \mathcal{E}} \lfloor |E|/2 \rfloor \le  1+ \sum_{E\in\mathcal{E}} |E|/2.
\]

It remains to estimate $\sum_{E\in\mathcal{E}} |E|$.
Since $P_1$ and $Q_1$ are not nested, $P_1 \not\subset Q_1$ and $Q_1 \not\subset P_1$. Hence 
\[
P_1 \setminus Q_1 \neq \emptyset \quad \text{and} \quad
Q_1 \setminus P_1 \neq \emptyset.
\]
Moreover, since $P_1$ and $Q_1$ are not orthogonal, either $P_1 \cap  Q_1 \neq \emptyset$, or there is an edge of $\Gamma_i$ joining a vertex of $P_1$ to a vertex of $Q_1$. In either case, there exists a vertex in $P_1 \setminus Q_1$ which is adjacent to a vertex of $Q_1$, and there exists a vertex in $Q_1 \setminus P_1$ which is adjacent to a vertex of $P_1$. These two vertices are distinct.
By the definition of $\mathcal{E}$, no member of $\mathcal{E}$ contains
such a boundary vertex. 
Therefore, at least two vertices of $\Gamma_i$ are not contained in the union of the members of $\mathcal{E}$. 
Since $\Gamma_i$ has $n_i$ vertices, we obtain
\[
\sum_{E\in \mathcal E}|E|\le n_i-2.
\]
Thus,
\[
q \le 1+\frac{n_i-2}{2}=\frac{n_i}{2},
\]
and since $q$ is an integer, we have $q \le \lfloor n_i/2 \rfloor$.
\end{proof}

Since the subcomplex of $\mathscr{A}_W$ isomorphic to $\prod_{i=1}^m C_i$ is convex in $\mathscr A_W$, 
by~\cite[Theorem~1.1]{Sakai_Sakuma}, $\{e_1^-, e_1^+\} * \cdots * \{e_m^-, e_m^+\}$ is a full subcomplex of $K_\Gamma$.
For each $j$, the two vertices $e_j^-$ and $e_j^+$ are not adjacent.
Hence, the corresponding connected subdiagrams must be contained in
the same connected component of $\Gamma$, since subdiagrams contained
in distinct components are orthogonal and therefore adjacent in
$K_\Gamma$.
For each $i$, let $m_i$ be the number of pairs
$\{e_j^-,e_j^+\}$ contained in $\Gamma_i$. Then $m=\sum_{i=1}^k m_i$.
The vertices belonging to these $m_i$ pairs span a full subcomplex of
$K_{\Gamma_i}$ isomorphic to the join of $m_i$ copies of $\mathbb{S}^0$.
Therefore, by Lemma~\ref{lem:tree_upper_bound}, $m_i\le \lfloor n_i / 2 \rfloor$, and thus,
\[
m\le
\sum_{i=1}^k
\left\lfloor\frac{n_i}{2}\right\rfloor.
\]

We now prove that $m=\sum_{i=1}^k \lfloor n_i/2\rfloor$.
Fix a connected component $\Gamma_i$ of $\Gamma$, and let $n$ be the
number of vertices of $\Gamma_i$.
We distinguish two cases according to whether $\Gamma_i$ is a path or not.
First, we consider the case where the Coxeter diagram is a path.
This includes the following types:

\subsection{Types $A_n$, $B_n$, $F_4$, $H_3$, $H_4$ and $I_2(p)$}
\label{subsec:path_types}
In this case, let $(W,S)$ be an irreducible finite Coxeter system whose Coxeter diagram is a path. We label the vertices of $\Gamma_i$ as 
$s_1, \dots, s_n$ so that $s_i$ is adjacent to $s_{i+1}$ for $i=1, \dots, n-1$.
Note that $n=4$ for $F_4$ and $H_4$, $n=3$ for $H_3$, and $n=2$ for $I_2(p)$.

For $1 \le a \le b \le n$, set $I_{a,b}:=\{s_a, \dots, s_b\} \subseteq S$.
Since $\Gamma_i$ is a path, the irreducible subsets of $S$
are $I_{a,b}$ with $1 \le a \le b \le n$.
Thus, under the natural identification of the vertices of $K_{\Gamma_i}$ with the connected full subdiagrams of $\Gamma_i$,
the vertices of $K_{\Gamma_i}$ are naturally identified with the
intervals $I_{a,b}$. 
By Lemma~\ref{lem:squares}, two vertices $I_{a,b}$ and $I_{c,d}$ are
adjacent in $K_{\Gamma_i}$ if and only if the corresponding intervals
are nested or orthogonal.
Especially, if $I_{a,b}$ and $I_{c,d}$ are orthogonal, then $b+1<c$ or $d+1<a$.
If $I_{a,b}$ and $I_{c,d}$ are nested, then $a \le c \le d \le b$ or $c \le a \le b \le d$.

For $j=1,\dots,\lfloor n/2\rfloor$, let $P_j=I_{2j,2j}$ and $Q_j=I_{1,2j-1}$.
Then $P_j$ and $Q_j$ are not adjacent in $K_{\Gamma_i}$, since they
are neither nested nor orthogonal.
If $j<\ell$, then $P_j\subset Q_\ell$ and $Q_j\subset Q_\ell$,
while $P_\ell$ is orthogonal to both $P_j$ and $Q_j$.
Hence, every vertex in $\{P_j,Q_j\}$ is adjacent to every vertex in
$\{P_\ell,Q_\ell\}$.
Therefore, the full subcomplex of $K_{\Gamma_i}$ spanned by
\[
P_1,Q_1,\dots,
P_{\lfloor n/2\rfloor},Q_{\lfloor n/2\rfloor}
\]
is isomorphic to the join of $\lfloor n/2\rfloor$ copies of $\mathbb{S}^0$.
Thus, $K_{\Gamma_i}$ contains the join of
$\lfloor n/2\rfloor$ copies of $\mathbb{S}^0$ as a full subcomplex.

\subsection{Types $D_n$, $E_6$, $E_7$ and $E_8$}
\label{subsec:non_path_types}
We now consider the remaining possibilities for the fixed connected
component $\Gamma_i$. Suppose that $\Gamma_i$ is not a path.
Then $\Gamma_i$ is of type $D_n$, $E_6$, $E_7$, or $E_8$.
We show that $K_{\Gamma_i}$ contains the join of
$\lfloor n/2\rfloor$ copies of $\mathbb{S}^0$ as a full subcomplex.

For type $D_n$, assume that $\Gamma_i$ is of type $D_n$.
Recall that we label its vertices as $s_1,\dots,s_n$ so that $s_{n-2}$ is the
unique vertex of degree $3$, and both $s_{n-1}$ and $s_n$ are adjacent
to $s_{n-2}$.

First, suppose that $n=2m+1$.
For $1\le j\le m$, consider the pairs
\[
P_j=\{s_{2j}\}, \qquad Q_j=\{s_1,\dots,s_{2j-1}\}.
\]
In each pair, the two subdiagrams are neither nested nor orthogonal,
and hence they are not adjacent in $K_{\Gamma_i}$.
On the other hand, if two subdiagrams belong to different pairs, then
they are nested or orthogonal, and hence they are adjacent in
$K_{\Gamma_i}$.
Therefore, these pairs span a full subcomplex isomorphic to the join of
$m=\lfloor n/2\rfloor$ copies of $\mathbb{S}^0$.

Now suppose that $n=2m$. Consider the same pairs
\[
\begin{aligned}
P_j&=\{s_{2j}\}, \qquad Q_j=\{s_1,\dots,s_{2j-1}\}, \quad \text{for $1 \le j \le m-1$}, \\
P_m&=\{s_1,\dots,s_{n-1}\},
\qquad
Q_m=\{s_1,\dots,s_{n-2},s_n\}.
\end{aligned}
\]
Again, the two subdiagrams in each pair are neither nested nor
orthogonal. Moreover, for $j<m$, both $P_j$ and $Q_j$ are contained
in $P_m$ and $Q_m$. Thus, vertices belonging to distinct pairs are
adjacent. Therefore, these pairs span a full subcomplex isomorphic to
the join of $m=\lfloor n/2\rfloor$ copies of $\mathbb{S}^0$.

\medskip 
We next consider types $E_6$, $E_7$, and $E_8$. Recall that we use the numbering
in which $s_3$ is the unique vertex of degree $3$ and $s_4$ is the
leaf adjacent to $s_3$.

For types $E_6$ and $E_7$, consider the three pairs
\[
\begin{aligned}
P_1&=\{s_2\}, & Q_1&=\{s_1\}, \\
P_2&=\{s_4\}, & Q_2&=\{s_1,s_2,s_3\}, \\
P_3&=\{s_6\}, & Q_3&=\{s_1,s_2,s_3,s_4,s_5\}.
\end{aligned}
\]
For each $i$, $P_i$ and $Q_i$ are not adjacent, while vertices
belonging to distinct pairs are adjacent. Therefore, these pairs span
a full subcomplex isomorphic to the join of three copies of $\mathbb{S}^0$. Since $3=\lfloor 6/2 \rfloor=\lfloor 7/2 \rfloor$, this gives the required subcomplex for types $E_6$ and $E_7$.

For type $E_8$, consider the above three pairs together with
\[
P_4=\{s_1,\dots,s_7\}, \qquad Q_4=\{s_8\}.
\]
Again, $P_4$ and $Q_4$ are not adjacent, while each of $P_4$ and $Q_4$
is adjacent to every vertex in the first three pairs.
Therefore, these four pairs span a full subcomplex isomorphic to the
join of $4=\lfloor 8/2 \rfloor$ copies of $\mathbb{S}^0$.
\end{proof}

\section{Finding maximal free abelian subgroups of generalized cactus groups}
\label{sec:finding_subgroups}

Let $(W,S)$ be a finite Coxeter system with connected components $S_1,\dots,S_k$ of $S$, and let $n_i=|S_i|$ for each $i$.
The goal of this section is to construct free abelian subgroups of the generalized cactus group $C_W$ with the expected maximal rank $m=\sum_{i=1}^k \lfloor n_i/2 \rfloor$.
In fact, the subgroup that we construct lies in $PC_W$.

We first construct explicit elements and verify their pairwise commutativity in $C_W$ using the defining relations of $C_W$.
The defining presentation of $C_W$ makes van Kampen diagrams a natural tool for verifying that explicitly given words represent the identity. Accordingly, in Lemma~\ref{lem:An-commutativity}, we first use the defining relations of $C_W$ to establish the pairwise commutativity of the elements that we construct. On the other hand, properties such as having infinite order and being independent are less straightforward to establish directly from the presentation.
After showing that these elements lie in $PC_W$ in Lemma~\ref{lem:An-ai-in-PCW}, we then apply Yu's embedding $\Phi$ and explicitly determine the images $\Phi(a_i)$ in Lemma~\ref{lem:An-PQR}. We use these descriptions in Lemma~\ref{lem:An-infinite-order} to show that each $a_i$ has infinite order.
Finally, in Proposition~\ref{thm:An-free-abelian}, we use Yu's embedding to prove the independence of $a_1, \dots, a_m$. Combined with the preceding lemmas, this shows that $\langle a_1, \dots, a_m \rangle$ is a free abelian subgroup of $PC_W$, and hence of $C_W$.

\begin{remark}
\label{rmk:irreducible}
It is enough to construct the desired free abelian subgroups under the assumption that $(W, S)$ is irreducible.
The reason is as follows: suppose that $S=S_1 \sqcup \cdots \sqcup S_k$, where $S_i$ is an irreducible component for each $i=1, \dots, k$.
If, for each $i$, there is a free abelian subgroup $A_i \le PC_{W_{S_i}}$ of rank $r_i$, then we may regard $A_i$ as a free abelian subgroup of $PC_W$ since $S_i \subseteq S$ induces an embedding $PC_{W_{S_i}} \hookrightarrow PC_W$.

Since different irreducible components are orthogonal, the subgroups $A_1, \dots, A_k$ commute with each other. Moreover, each $A_i$ comes from a different irreducible component. Hence, the subgroup generated by $A_1, \dots, A_k$ is naturally isomorphic to $A_1 \times \cdots \times A_k$.
For each $i=1, \dots, k$, since $A_i \cong \mathbb{Z}^{r_i}$, we have
\[
A_1 \times \cdots \times A_k \cong \mathbb{Z}^{r_1} \times \cdots \times \mathbb{Z}^{r_k}.
\]
Therefore, $A_1\times\cdots\times A_k$ is a free abelian subgroup of rank $\sum_{i=1}^k r_i$.
\end{remark}

By Remark~\ref{rmk:irreducible}, we may assume throughout this section that $(W,S)$ is irreducible. Under this assumption, $S$ has only one connected component. Hence, we write $n=|S|$ and $m=\lfloor n/2 \rfloor$.
Let $S=\{s_1,\ldots,s_n\}$. For $1\le p\le q\le n$, let
\[
I_{p,q}:=\{s_p,\ldots,s_q\}\subseteq S,\qquad \Sigma_{p,q}:=\sigma_{I_{p,q}}\in C_W.
\]

We consider the finite irreducible Coxeter types separately.
In each case below, we take $(W, S)$ to be a Coxeter system of the corresponding type.
We first specify elements $a_1,\ldots,a_m$ of $C_W$.

\medskip
\begin{enumerate}[\noindent \rm (i)]
\item \textbf{Type $A_n$ for $n \ge 1$}: For each $i=1, \dots, m$,
\[
a_i=\Sigma_{2i,2i}\,\Sigma_{1,2i-1}\,\Sigma_{2,2i}\,\Sigma_{1,1}\,\Sigma_{2,2i}\,\Sigma_{1,2i-1}.
\]

\medskip 

\item \textbf{Type $B_n$ for $n \ge 2$}: For each $i=1, \dots, m$,
\[
a_i = (\Sigma_{1,2i-1}\Sigma_{2i,2i})^{4}.
\]

\medskip 

\item \textbf{Type $D_n$ for $n \ge 4$}: 
If $n=2m+1$, then
\[
a_i=\Sigma_{2i,2i}\,\Sigma_{1,2i-1}\,\Sigma_{2,2i}\,\Sigma_{1,1}\,\Sigma_{2,2i}\,\Sigma_{1,2i-1} \quad \text{for each $i=1, \dots, m$}.
\] 
If $n=2m$, then \[
a_i=\Sigma_{2i,2i}\,\Sigma_{1,2i-1}\,\Sigma_{2,2i}\,\Sigma_{1,1}\,\Sigma_{2,2i}\,\Sigma_{1,2i-1}, \quad \text{for each $i=1, \dots, m-1$},
\]
and
\[
J=\{s_1,\dots,s_{n-2},s_n\},\qquad
b=\Sigma_{1,n-1}\,\Sigma_{2,n}\,\sigma_J,\qquad
a_m=b^2.
\]

\medskip 

\item \textbf{Types $E_6$, $E_7$ and $E_8$}: 
\[
\begin{aligned}[t]
    a_1&=(\Sigma_{2,2}\Sigma_{1,1})^3,\\
    a_2&=\Sigma_{4,4}\ \Sigma_{1,3}\ \Sigma_{2,4}\ \Sigma_{1,1}\ \Sigma_{2,4}\ \Sigma_{1,3},\\
    a_3&=\Sigma_{6,6}\ \Sigma_{1,5}\ \sigma_{J}\ \Sigma_{2,6}\ \Sigma_{1,1}\ \Sigma_{2,6}\ \sigma_{J}\ \Sigma_{1,5},
    \qquad J:=\{s_1,s_2,s_3,s_5,s_6\},\\
    a_4&=(\Sigma_{1,7}\Sigma_{8,8})^6 \qquad \text{(only in type $E_8$).}
\end{aligned}
\]

\medskip 

\item \textbf{Type $F_4$}: $a_1=(\Sigma_{1,1}\Sigma_{2,2})^3$ and
$a_2=(\Sigma_{1,3}\Sigma_{4,4})^6$.
\medskip

\item \textbf{Types $H_3$ and $H_4$}: $a_1=(\Sigma_{1,1}\Sigma_{2,2})^5$ and $a_2=(\Sigma_{1,3}\Sigma_{4,4})^{10}$.

\medskip

\item \textbf{Type $I_2(p)$ for $p \ge 5$}: $a_1=(\Sigma_{1,1}\Sigma_{2,2})^p$.
\end{enumerate}

\begin{lemma}\label{lem:An-commutativity}
For each $i \neq j$, $[a_i,a_j]=1$ in $C_W$.
\end{lemma}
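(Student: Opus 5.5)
The plan is to show that for $i<j$ every generator occurring in $a_i$ commutes with $a_j$. Equivalently, $a_j$ centralizes the subgroup generated by the $\sigma_I$ with $I\subseteq I_{1,2i}$ (or the relevant small set for $a_i$). The key observation is that in every type, each $a_i$ is a word in generators $\sigma_I$ with $I$ contained in a connected set $T_i$: in the path case $T_i=I_{1,2i}$; for $D_{2m}$, $T_m=S$; for $E$, $T_1=\{s_1,s_2\}$, $T_2=\{s_1,\dots,s_4\}$, $T_3=\{s_1,\dots,s_6\}$, $T_4=S$. These sets are nested, $T_1\subset T_2\subset\cdots$. Moreover, $a_j$ is built from $\sigma_{J}$ with $J\subseteq T_j$, and $T_i\subseteq T_{j}$ lies in the part of $T_j$ "before" the last added vertices.

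First I will prove a general conjugation lemma. If $K\in\mathcal I$ and $I\subseteq K$, then relation (iii) gives $\sigma_K\sigma_I\sigma_K=\sigma_{\omega_K I\omega_K}$. Hence for any word $x=\sigma_{K_1}\cdots\sigma_{K_r}$ with every $K_t\supseteq I'$ for the relevant current set, we have $x\sigma_I x^{-1}=\sigma_{g(x)Ig(x)^{-1}}$, tracked by the image $g_W(x)\in W$, provided each intermediate conjugate stays inside the next $K_t$. Combined with (ii) for orthogonal sets, this yields the following. If every letter of $a_j$ either contains $T_i$-type sets (so it acts by conjugation by a longest element) or is orthogonal to $T_i$, then
\[
a_j\,\sigma_I\,a_j^{-1}=\sigma_{uIu^{-1}},\qquad u:=g_W\bigl(\text{letters of }a_j\text{ containing }T_i\bigr).
\]
So it suffices to check two things. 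First, each letter of $a_j$ either contains $T_i$ or is orthogonal to $T_i$; this is essentially the join/adjacency verification already made in Section~3, where the pairs $P_j,Q_j$ were shown nested or orthogonal with the earlier pairs. Second, the resulting element $u\in W$ normalizes $T_i$ and acts trivially on it, i.e.\ conjugation by $u$ fixes each $s\in T_i$.

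There is a subtlety. The letters of $a_j$ such as $\Sigma_{2,2j}$ or $\Sigma_{2j,2j}$ do not contain $I_{1,2i}$; for instance $\Sigma_{2,2j}\not\supseteq I_{1,2i}$. So I will instead push each generator $\sigma_I$ of $a_i$ through $a_j$ letter by letter, tracking the current set $I^{(t)}$: at each step $I^{(t)}$ must be nested in, or orthogonal to, the next letter $K_t$. When $I^{(t)}\subseteq K_t$ it gets reflected to $\omega_{K_t}I^{(t)}\omega_{K_t}$. When $K_t\subseteq I^{(t)}$, then $\sigma_{K_t}$ gets replaced by a conjugate letter, which changes the word of $a_j$. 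This second case is the main obstacle, since it can happen, e.g.\ $\Sigma_{1,1}\subset I$. To avoid it, I will rather push the letters of $a_i$ through $a_j$ in the other direction, commuting whole $a_i$ past each letter $\sigma_K$ of $a_j$: if $K\supseteq T_i$, then $\sigma_K a_i\sigma_K=a_i'$, the word with every index reflected by $\omega_K$; if $K\perp T_i$, it commutes. For a letter like $\Sigma_{1,1}$ inside $T_i$, I instead group $\Sigma_{2,2j}\Sigma_{1,1}\Sigma_{2,2j}=\sigma_{\omega_{2,2j}\{s_1\}\omega_{2,2j}}$-type rewriting is unavailable, so I will handle those types by explicit computation.

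Concretely, for the path types I will reflect indices: in type $A$, $\omega_{I_{1,2j-1}}$ sends $I_{p,q}\mapsto I_{2j-q,2j-p}$, and a short bookkeeping shows that $a_i$ maps to a word $a_i'$ which equals $a_i$ after applying it twice (the two occurrences of $\Sigma_{1,2j-1}$ and $\Sigma_{2,2j}$). For $B$, $F_4$, $H$, $I_2$, and the $E_8$ power case, the longest elements are central (Lemma~\ref{lem:central-longest-element}), so all conjugations act trivially on subsets and commutativity follows directly. For $D_{2m}$ and $E_6,E_7$ I will verify the needed identities on subsets using the signed-permutation model and the stated SageMath checks. Where a letter of $a_j$ is contained in $T_i$, I will reduce via relation (iii) applied in reverse, so that the problematic letter is absorbed by a larger neighbouring letter before commuting.
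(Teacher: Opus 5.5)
Your mechanism is sound, and completed it would be a genuinely different, diagram-free proof from the paper's. The paper draws van Kampen diagrams for $A$, $D$, $E$ and uses four evident commutations for $B_n$, $F_4$, $H_4$. Your two tools are right: by (iii), if every index of a word $w$ lies in $K$, then $\sigma_K w\sigma_K$ is $w$ with each $I$ replaced by $\omega_K I\omega_K$; by (ii), $\sigma_K$ commutes with $w$ if every index is orthogonal to $K$. Your centrality argument for $B_n$, $F_4$, $H_4$ and for $a_4$ in $E_8$ is correct: the relevant $W_K$ have types $B_{2j-1}$, $B_3$, $H_3$, $E_7$, and the remaining letter is orthogonal.

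The gap is that the nontrivial cases are not actually proved. The criterion you adopt, that each letter of $a_j$ contains $T_i$ or is orthogonal to $T_i$, fails for $\Sigma_{2,2j}$ and for the middle letter $\Sigma_{1,1}$, as you noticed. For $\Sigma_{1,1}$ you offer only an unspecified ``explicit computation'' or absorption into a neighbouring letter via (iii). That absorption is impossible, since the neighbours are $\Sigma_{2,2j}$ and $s_1\notin I_{2,2j}$. The cases $D_{2m}$ and $E_6$, $E_7$, in particular $[a_2,a_3]$, are deferred entirely. The reduction of $D_{2m+1}$, and of $a_1,\dots,a_{m-1}$ in $D_{2m}$, to type $A_{n-1}$ is not mentioned either.

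The missing idea is to compare each letter of $a_j$ with the \emph{current} conjugate of $a_i$, not with $T_i$. Your own proviso, ``each intermediate conjugate stays inside the next $K_t$'', already points this way, and with it the feared case $K_t\subsetneq I^{(t)}$ never occurs.

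In type $A$, write $a_j=\Sigma_{2j,2j}\,y\,\Sigma_{1,1}\,y^{-1}$ with $y=\Sigma_{1,2j-1}\Sigma_{2,2j}$. Here $\omega_{I_{1,2j-1}}$ and $\omega_{I_{2,2j}}$ act by $I_{p,q}\mapsto I_{2j-q,2j-p}$ and $I_{p,q}\mapsto I_{2j+2-q,2j+2-p}$, and $i<j$ guarantees the needed containments. Hence $y^{-1}a_iy$ is $a_i$ with every $I_{p,q}$ replaced by $I_{p+2,q+2}$. It is supported in $I_{3,2i+2}$, which is orthogonal to $\{s_1\}$, and also $\{s_{2j}\}\perp I_{1,2i}$; so $[a_i,a_j]=1$. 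The two conjugations compose to a shift, which the reversed second occurrences undo; it is not that something is ``applied twice''.

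In type $E$, put $z=\Sigma_{1,5}\sigma_J\Sigma_{2,6}$, so that $a_3=\Sigma_{6,6}\,z\,\Sigma_{1,1}\,z^{-1}$. Use that $\omega_{I_{1,5}}$ swaps $s_4,s_5$, $\omega_J$ reverses the path $s_1s_2s_3s_5s_6$, and $\omega_{I_{2,6}}$ swaps $s_2,s_4$. Then $z^{-1}a_1z=(\Sigma_{5,5}\Sigma_{6,6})^3$. The letters $\{s_4\},I_{1,3},I_{2,4},\{s_1\}$ of $a_2$ become $\{s_4\},\{s_3,s_5,s_6\},\{s_3,s_4,s_5\},\{s_6\}$, all orthogonal to $s_1$. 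The sets $P,Q,R$ of the paper's $\Delta_6$ are exactly the intermediate indices.

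In $D_{2m}$, conjugate $a_i$ ($i<m$) successively by $\sigma_J$, $\Sigma_{2,n}$, $\Sigma_{1,n-1}$. The support stays inside the next letter because $n-2i\ge 2$. The total conjugator is $\omega_A\omega_B\omega_C=w_0$, which is central, so $[a_i,b]=1$.

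With these checks supplied, your route gives a complete and more transparent proof. Without them, the proposal leaves exactly the hard cases open.
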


\begin{proof}
We prove the result using Theorem~\ref{thm:van_Kampen_Lemma}. In the cases where commutativity requires verification, we construct reduced van Kampen diagrams over the presentation of $C_W$ given in Definition~\ref{def:pcw}. Each $2$-cell in the diagrams corresponds to one of the defining relators of $C_W$, whose validity can be checked directly from the Coxeter diagram and the relevant longest element.
For simplicity, in the figures we omit edge orientations and do not draw the $2$-cells whose boundary labels are of the form $\sigma_I^2$. Since every generator $\sigma_I$ has order $2$ in $C_W$, reversing the orientation of an edge does not change its label.
If $i=j$, clearly, $[a_i, a_j]=1$ in $C_W$. For each type, suppose that $i < j$.

\medskip
\indent (i) {\bf Type $A_n$ for $n \ge 1$}:  
Using the defining relations, we construct a reduced van Kampen diagram $\Delta_1$ over the presentation of $C_W$ given in Definition~\ref{def:pcw} such that $(\Lab(\partial \Delta_1)) \equiv ([a_i,a_j])$ as shown in Figure~\ref{fig:type_A_n}.
\begin{figure}[h]
\begin{center}
	\includegraphics[width=0.85\columnwidth]{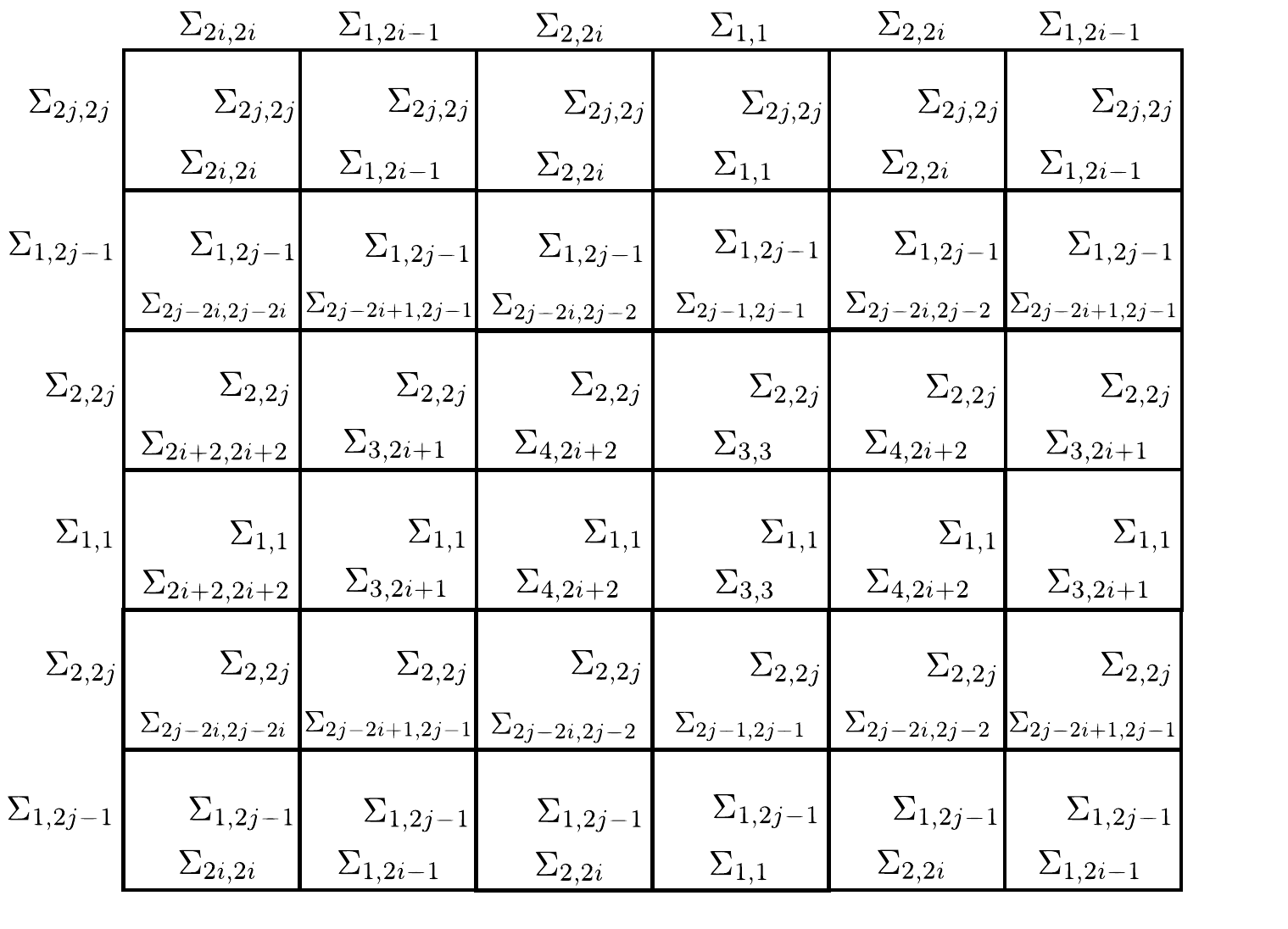}
\end{center}
\vspace{-4mm}
\caption{\label{fig:type_A_n} Type $A_n$ for $n \ge 1$: a van Kampen diagram $\Delta_1$
}						
\end{figure}
Therefore, by Theorem~\ref{thm:van_Kampen_Lemma}, $[a_i, a_j]=1$ in $C_W$.

\medskip
\indent (ii) {\bf Type $B_n$ for $n \ge 2$}:
The following relations hold in $C_W$: 
\[ 
\begin{aligned} 
\Sigma_{1,2i-1}\Sigma_{1,2j-1} &=\Sigma_{1,2j-1}\Sigma_{1,2i-1},& \Sigma_{2i,2i}\Sigma_{1,2j-1} &=\Sigma_{1,2j-1}\Sigma_{2i,2i},\\ \Sigma_{1,2i-1}\Sigma_{2j,2j} &=\Sigma_{2j,2j}\Sigma_{1,2i-1},& \Sigma_{2i,2i}\Sigma_{2j,2j} &=\Sigma_{2j,2j}\Sigma_{2i,2i}. 
\end{aligned} 
\] 
Hence, 
$[\Sigma_{1,2i-1}\Sigma_{2i,2i}, \Sigma_{1,2j-1}\Sigma_{2j,2j}]=1$ in $C_W$, and thus, $[a_i,a_j]=1$ in $C_W$.

\medskip
\indent (iii) {\bf Type $D_n$ for $n \ge 4$}: As long as neither $a_i$ nor $a_j$ is $a_m$ in Type $D_{2m}$, all generators involved lie in the standard parabolic subgroup generated by $\{s_1, \dots, s_{n-1} \}$, which is of type $A_{n-1}$. Hence, $[a_i, a_j]=1$ in $C_W$ for $i, j \in \{1, \dots, m-1\}$.

Suppose that $i \in \{1, \dots, m-1\}$. Since $a_m=b^2$, to prove that $[a_i, a_m]=1$ in $C_W$, we show that $[a_i, b]=1$ in $C_W$.
Then, the defining relations yield a reduced van Kampen diagram $\Delta_3$ over the presentation of $C_W$ given in Definition~\ref{def:pcw} satisfying $(\Lab(\partial \Delta_3)) \equiv ([a_i, b])$, as illustrated in Figure~\ref{fig:type_D_n}.
\begin{figure}[h]
\begin{center}
	\includegraphics[width=0.85\columnwidth]{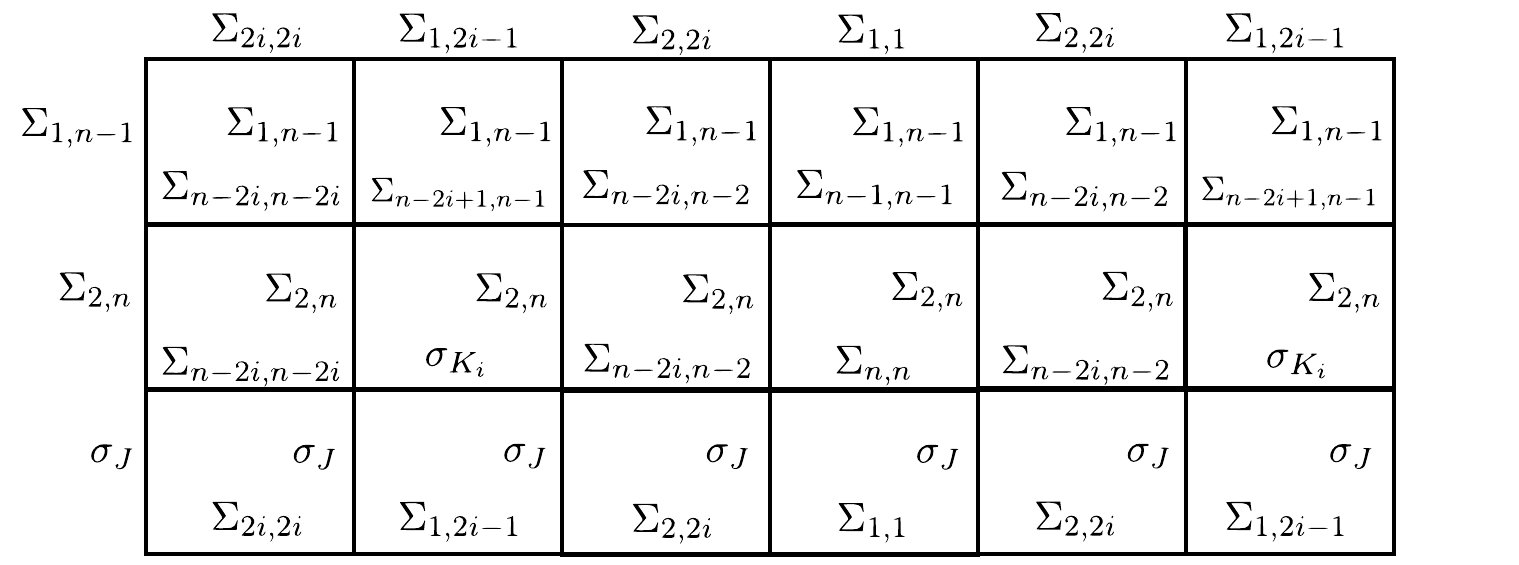}
\end{center}
\vspace{-4mm}
\caption{\label{fig:type_D_n} Type $D_n$ for $n \ge 4$: a van Kampen diagram $\Delta_3$, where $K_i=\{s_{n-2i+1}, \dots, s_{n-2},s_n\}$
}						
\end{figure}

\medskip
(iv) {\bf Types $E_6$, $E_7$, and $E_8$}: In these types, the defining relations used to verify the commutativity of $a_1, a_2$ and $a_3$ are the same, since the corresponding generators lie in the same subsystem of type $E_6$. Thus, it suffices to verify that $[a_1, a_2]=[a_1,a_3]=[a_2,a_3]=1$ in $C_W$, and in type $E_8$, we additionally verify that $a_4$ commutes with each of $a_1, a_2$ and $a_3$.

Using the defining relations of $C_W$, we construct reduced van Kampen diagrams $\Delta_4$ and $\Delta_5$ over the presentation of $C_W$ given in Definition~\ref{def:pcw}, shown from left to right in Figure~\ref{fig:type_E_n}, such that $(\Lab(\partial \Delta_4)) \equiv ([\Sigma_{2,2}\Sigma_{1,1}, a_2])$ and $(\Lab(\partial \Delta_5)) \equiv ([\Sigma_{2,2}\Sigma_{1,1}, a_3])$, respectively.
\begin{figure}[h]
\begin{center}
	\includegraphics[width=0.7\columnwidth]{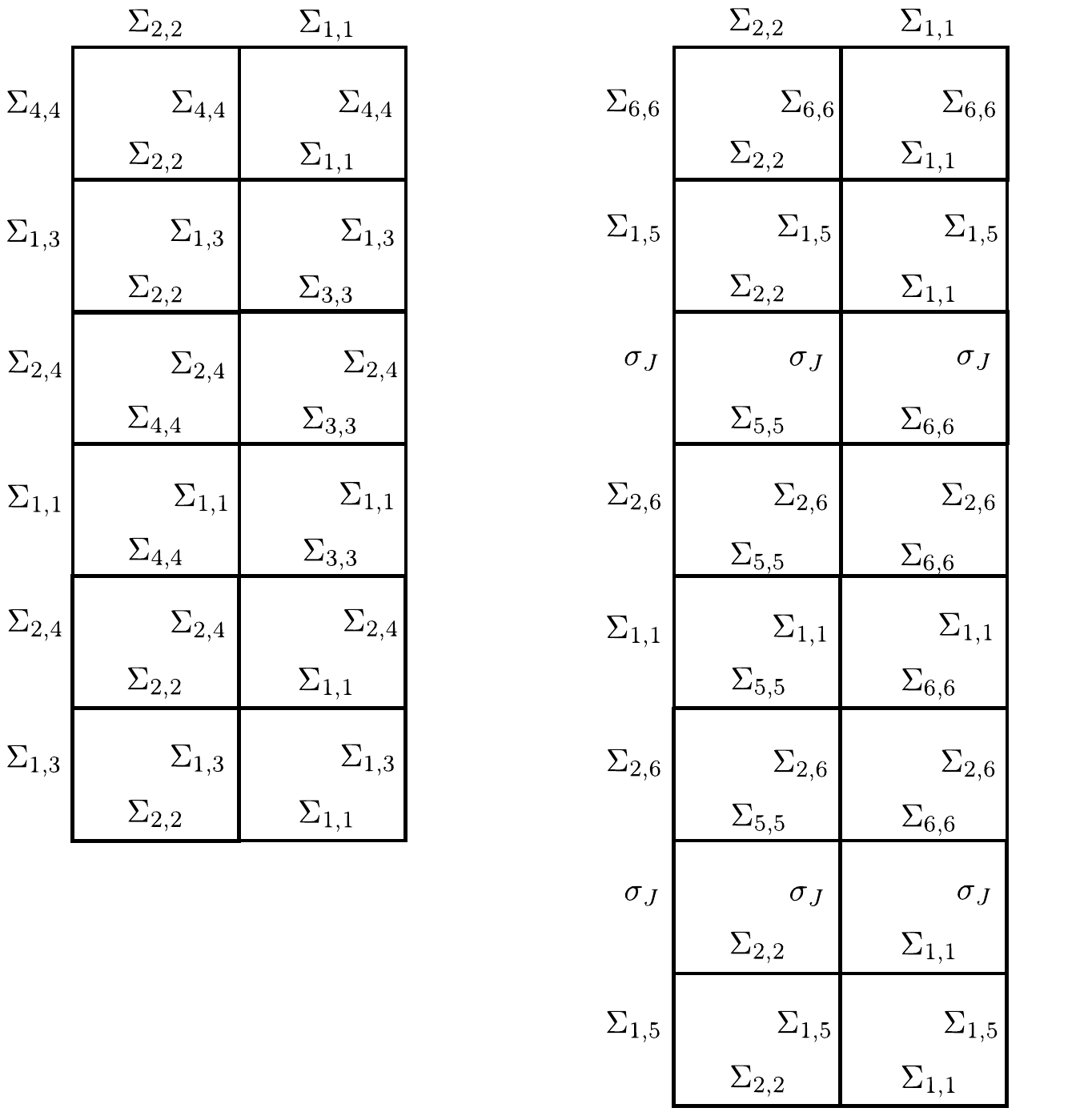}
\end{center}
\vspace{-4mm}
\caption{\label{fig:type_E_n} Types $E_6$ and $E_7$: van Kampen diagrams $\Delta_4$ and $\Delta_5$
}						
\end{figure}
by Theorem~\ref{thm:van_Kampen_Lemma}, $[\Sigma_{2,2}\Sigma_{1,1}, a_2]=1$ and $[\Sigma_{2,2}\Sigma_{1,1}, a_3]=1$ in $C_W$.
Therefore, $[a_1, a_2]=1$ and $[a_1, a_3]=1$ in $C_W$.

To show that $[a_2, a_3]=1$ in $C_W$, we draw a van Kampen diagram $\Delta_6$ over the presentation of $C_W$ given in Definition~\ref{def:pcw} such that $(\Lab(\partial \Delta_6)) \equiv ([a_2, a_3])$ as shown in Figure~\ref{fig:type_E_n_a2a3}.
\begin{figure}[h]
\begin{center}
	\includegraphics[width=0.8\columnwidth]{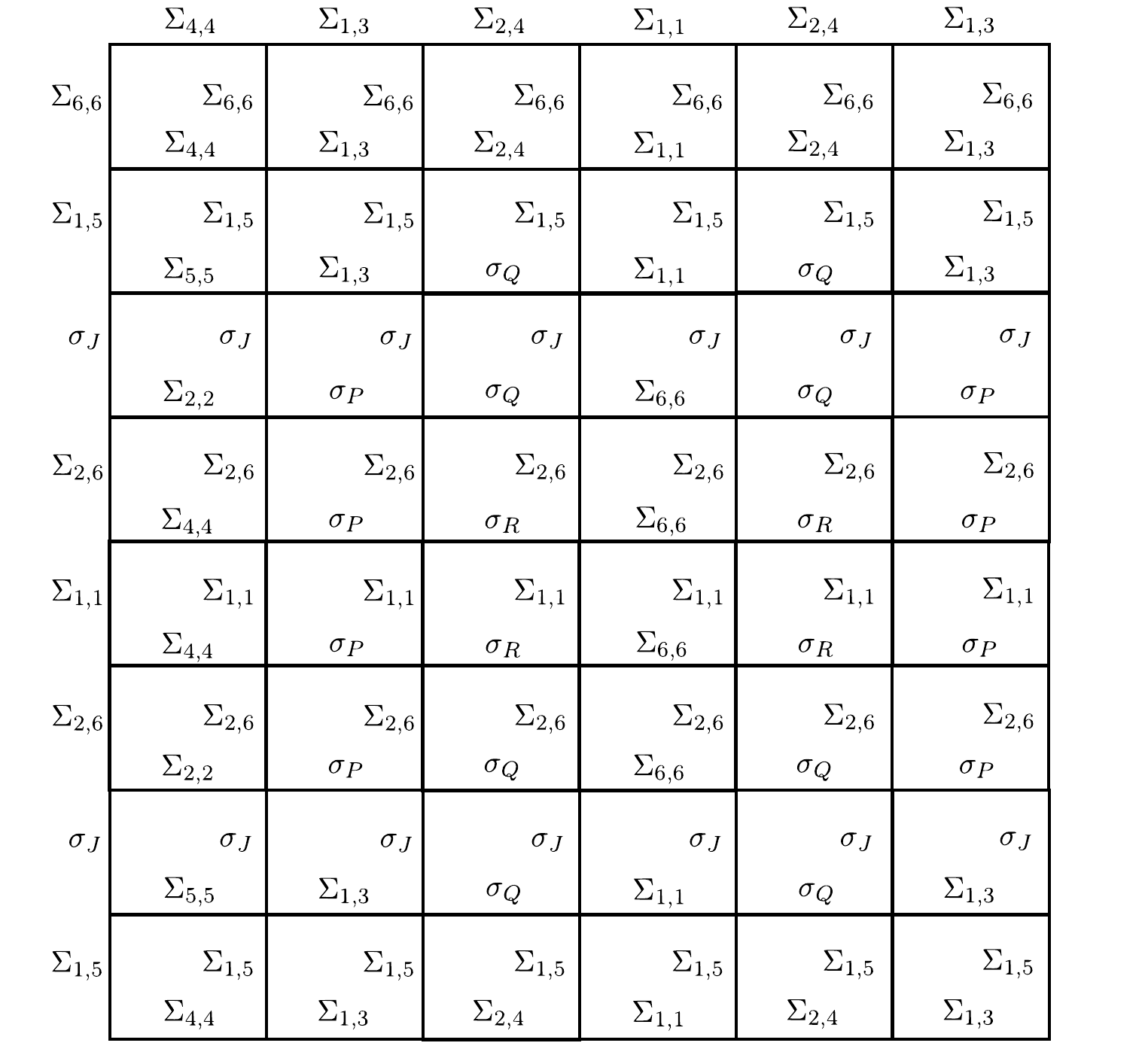}
\end{center}
\vspace{-4mm}
\caption{\label{fig:type_E_n_a2a3} Type $E_8$: a van Kampen diagram $\Delta_6$, where $P=\{s_3, s_5, s_6\}$, $Q=\{s_2, s_3, s_5\}$ and $R=\{s_3, s_4, s_5\}$
}						
\end{figure}
Then, by Theorem~\ref{thm:van_Kampen_Lemma}, we have $[a_2, a_3]=1$ in $C_W$.

The commutativity of $a_i$ and $a_4$ for $i=1, 2, 3$ follows easily from the defining relations of $C_W$.
Indeed, for every subset $I$ corresponding to a generator $\sigma_I$ appearing in $a_1$, $a_2$, or $a_3$, we have
\[
\sigma_I\Sigma_{1,7}
=
\Sigma_{1,7}\sigma_I
\qquad\text{and}\qquad
\sigma_I\Sigma_{8,8}
=
\Sigma_{8,8}\sigma_I.
\]
Therefore, $[a_i, a_4]=1$ in $C_W$ for $i=1, 2, 3$.

\medskip
(v) {\bf Type $F_4$}: The following relations hold in $C_W$: 
\[ 
\begin{aligned} 
\Sigma_{1,1}\Sigma_{1,3} &=\Sigma_{1,3}\Sigma_{1,1},& \Sigma_{2,2}\Sigma_{1,3} &=\Sigma_{1,3}\Sigma_{2,2},\\ \Sigma_{1,1}\Sigma_{4,4} &=\Sigma_{4,4}\Sigma_{1,1},& \Sigma_{2,2}\Sigma_{4,4} &=\Sigma_{4,4}\Sigma_{2,2}. 
\end{aligned} 
\]
Hence, $[\Sigma_{1,1}\Sigma_{2,2}, \Sigma_{1,3}\Sigma_{4,4}]=1$
in $C_W$, and therefore, $[a_i, a_j]=1$ in $C_W$.

\medskip
(vi) {\bf Types $H_3$ and $H_4$}: In type $H_3$, there is only one element $a_1$. Hence, there is no commutativity to verify.
In type $H_4$, the following relations hold in $C_W$: 
\[ 
\begin{aligned} 
\Sigma_{1,1}\Sigma_{1,3} &=\Sigma_{1,3}\Sigma_{1,1},& \Sigma_{2,2}\Sigma_{1,3} &=\Sigma_{1,3}\Sigma_{2,2},\\ \Sigma_{1,1}\Sigma_{4,4} &=\Sigma_{4,4}\Sigma_{1,1},& \Sigma_{2,2}\Sigma_{4,4} &=\Sigma_{4,4}\Sigma_{2,2}. 
\end{aligned} 
\]
Hence, $[\Sigma_{1,1}\Sigma_{2,2},\Sigma_{1,3}\Sigma_{4,4}]=1$ in $C_W$, and thus, $[a_1,a_2]=1$ in $C_W$.

\medskip
(vii) {\bf Type $I_2(p)$ for $p \ge 5$}: In this type, there is only one element $a_1$. Hence, we do not need to verify the commutativity.
\end{proof}

Next we show that the subgroup of $C_W$ generated by $a_1,\ldots,a_m$ is contained in $PC_W$.
Then we verify that these elements have infinite order, commute pairwise, and are independent to show that $\langle a_1,\ldots,a_m \rangle$ is isomorphic to $\mathbb{Z}^m$.

\begin{lemma}\label{lem:An-ai-in-PCW}
Each $a_i$ belongs to $P C_W$.
\end{lemma}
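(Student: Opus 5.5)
We need to show that $g_W(a_i)=1$ in $W$ for each $a_i$. Since $g_W(\sigma_I)=\omega_I$, the task is to compute the corresponding product of longest elements in $W$ and check that it is trivial. The check splits into the two shapes of $a_i$ appearing in the list. The tools are the concrete models of longest elements from Subsection~\ref{subsec:finitecoxetergroups}, Lemma~\ref{lem:central-longest-element}, and SageMath for the exceptional types.

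\emph{Power-type elements.} Here $a_i=(\sigma_I\sigma_J)^k$ with $I\perp J$, or with $I,J$ forming a small dihedral-type configuration. This covers type $B_n$, $F_4$, $H_3$, $H_4$, $I_2(p)$, $a_1$ and $a_4$ of type $E$, and $a_m=b^2$ in $D_{2m}$.

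In each case $g_W(a_i)=(\omega_I\omega_J)^k$, so it suffices to show that the order of $\omega_I\omega_J$ divides $k$.

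For $I_2(p)$ with $I=\{s_1\}$, $J=\{s_2\}$, the product $s_1s_2$ is a rotation of order $p$. Likewise $s_1s_2$ has order $3$ in $F_4$ and $E$, and order $5$ in $H_3$ and $H_4$.

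For $B_n$, use the signed-permutation model. Here $\omega_{I_{1,2i-1}}$ negates the first $2i-1$ entries, and $\omega_{I_{2i,2i}}=s^B_{2i-1}$ swaps entries $2i-1$ and $2i$. Their product acts on the coordinates $2i-1,2i$ by $(x,y)\mapsto(y,-x)$ (up to sign conventions) and as $-1$ on the earlier coordinates. This product therefore has order $4$.

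For $F_4$ and $H_4$, the element $\omega_{I_{1,3}}\omega_{s_4}$ and the $E_8$ element $\omega_{I_{1,7}}\omega_{s_8}$ will be handled by direct SageMath computation: raise to the stated power, here $6$, $10$ and $6$ respectively, and check that the reduced word is empty. Alternatively, where $\omega_{I_{1,3}}$ is central in $W_{I_{1,3}}$, one can argue by hand via the rank-two subgroup it generates with $s_4$.

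For $b$ in $D_{2m}$, compute $\omega_{I_{1,n-1}}\omega_{I_{2,n}}\omega_J$ in the signed-permutation model with our indexing. Each factor is explicit: a reversal of a block of window positions, possibly with sign changes. We then verify that the square of their product is trivial.

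\emph{Conjugation-type elements.} These are the elements $a_i=\Sigma_{2i,2i}\Sigma_{1,2i-1}\Sigma_{2,2i}\Sigma_{1,1}\Sigma_{2,2i}\Sigma_{1,2i-1}$ in types $A_n$ and $D_n$, together with $a_2,a_3$ in type $E$.

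The image is $s_{2i}\cdot x\, s_1\, x^{-1}$, where $x=\omega_{I_{1,2i-1}}\omega_{I_{2,2i}}$ (with $\omega_J$ inserted for $a_3$). Since all these generators lie in a type-$A$ parabolic, or in one copy of $E_6$, the computation reduces to the symmetric group. There, $\omega_{I_{1,2i-1}}$ reverses $\{1,\dots,2i\}$ and $\omega_{I_{2,2i}}$ reverses $\{2,\dots,2i+1\}$. A direct computation will show that $x s_1 x^{-1}=(2i\ \ 2i{+}1)=s_{2i}$, so the image is $s_{2i}^2=1$.

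For $E$, the word for $a_3$ involves $\omega_J$ with $J$ of type $A_5$. I would verify $g_W(a_2)=g_W(a_3)=1$ by the SageMath routine described earlier, using the $E_6$ Coxeter matrix, and likewise double-check all exceptional cases there.

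\emph{Main obstacle.} I expect the hardest part to be the bookkeeping of longest elements under our nonstandard indexing, particularly $b$ in type $D_{2m}$ and the conjugation identity $x s_1 x^{-1}=s_{2i}$. The latter must be checked carefully: reversing $\{1,\dots,2i\}$ sends the transposition $(1\,2)$ to $(2i{-}1\ 2i)$, and reversing $\{2,\dots,2i+1\}$ then sends that to $(2\ 3)$. So I would first recompute which order of factors yields $s_{2i}$, adjusting according to the convention that $g_W$ is a homomorphism read left to right. For the exceptional cases I rely on computer verification.
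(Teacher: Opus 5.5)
Your proposal is correct and follows the paper's proof. Both compute $g_W(a_i)$ type by type: the symmetric-group model for the type-$A$ parabolics (covering $A_n$, $D_n$, and $a_2$ in type $E$), the signed-permutation models for $B_n$ and $D_{2m}$, the Coxeter relation $(s_1s_2)^{m(s_1,s_2)}=1$ for the rank-two pieces, and SageMath for $a_3,a_4$ in type $E$ and for $a_2$ in $F_4$ and $H_4$. Your ordering worry resolves as in the paper: $g_W$ gives $\omega_{I_{1,2i-1}}\bigl(\omega_{I_{2,2i}}s_1\omega_{I_{2,2i}}\bigr)\omega_{I_{1,2i-1}}$, so $(1\ 2)\mapsto(1\ 2i{+}1)\mapsto(2i\ 2i{+}1)$ (your scratch check conjugated in the opposite order), and in $D_{2m}$ the product $\omega_{I_{1,n-1}}\omega_{I_{2,n}}\omega_J$ is the central involution $w_0=[-1,-2,\dots,-n]$, which is why $b^2$ maps to $1$.
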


\begin{proof}
Recall that $g_W : C_W \rightarrow W$ is the homomorphism defined by $g_W(\sigma_I)=\omega_I$, where $\omega_I$ is the longest element in the standard parabolic subgroup $W_I$ of $W$.

\medskip
(i) {\bf Type $A_n$ for $n \ge 1$}: In this type, we identify the Coxeter group with $S_{n+1}$ (see Section~\ref{subsec:finitecoxetergroups} for details).
Then we have $g_W(\Sigma_{2i,2i})=(2i\ 2i+1)$ and $g_W(\Sigma_{1,1})=(1\ 2)$, while $g_W(\Sigma_{1,2i-1})$ reverses $\{1,\ldots,2i\}$ and $g_W(\Sigma_{2,2i})$ reverses $\{2,\ldots,2i+1\}$.
Hence,
\[
g_W(\Sigma_{2,2i})g_W(\Sigma_{1,1})g_W(\Sigma_{2,2i})=(1\ 2i+1)=g_W(\Sigma_{1,2i-1})g_W(\Sigma_{2i,2i})g_W(\Sigma_{1,2i-1}).
\]
It follows that
\begin{align*}
    g_W(a_i)&=g_W(\Sigma_{2i,2i}\,\Sigma_{1,2i-1}\,\Sigma_{2,2i}\,\Sigma_{1,1}\,\Sigma_{2,2i}\,\Sigma_{1,2i-1})\\
    &=g_W(\Sigma_{2i,2i})\,g_W(\Sigma_{2i,2i})\\
    &=1.
\end{align*}
Therefore, $a_i \in \ker(g_W)=PC_W$ for every $i=1, \dots, m$.

\medskip

(ii) {\bf Type $B_n$ for $n \ge 2$}: It suffices to prove that $g_W(a_i)=(\omega_{I_{1,2i-1}}s_{2i})^4=1$ in $W$.
We use the signed-permutation model of type $B_n$ described in Section~\ref{subsec:finitecoxetergroups}.
In this model, the standard parabolic subgroup $W_{I_{1,2i-1}}$ is generated by $s_1=s_0^B, \dots, s_{2i-1}=s_{2i-2}^B$. We recall that
$s^B_0=[-1,2,\ldots,n]$,
and for $1\leq j\leq 2i-2$,
\[
s^B_j=[1,\ldots,j-1,j+1,j,j+2,\ldots,n].
\]
None of these generators affects the entries in positions $2i, \dots, n$.
Hence, the longest element of $W_{I_{1,2i-1}}$ changes the sign of the first $2i-1$ entries. That is,
\[
\omega_{I_{1,2i-1}}=[-1,-2,\ldots,-(2i-1),2i,\ldots,n].
\]
Let $h=\omega_{I_{1,2i-1}}s_{2i}$.
Then $h$ acts on $[\pm n]$ by
\[
h(j)=-j\ (1\le j\le 2i-2),\quad
h(2i-1)=2i,\quad
h(2i)=-(2i-1),\quad
h(j)=j\ (j\ge 2i+1).
\]
Hence,
\[
h^2(j)=j\ (1\le j\le 2i-2),\quad
h^2(2i-1)=-(2i-1),\quad
h^2(2i)=-2i,\quad
h^2(j)=j\ (j\ge 2i+1).
\]
It follows that $h^4=1$.
Therefore, $g_W(a_i)=(\omega_{I_{1,2i-1}}s_{2i})^4=h^4=1$, and we have $a_i\in PC_W$ for every $i=1, \dots, m$.

\medskip
(iii) {\bf Type $D_n$ for $n \ge 4$}: Assume first that $n=2m+1$. 
Then, for each $i=1, \dots, m$, 
every generator $\Sigma_{p,q}$ appearing in the defining $a_i$ satisfies $q \le 2i \le 2m=n-1$.
Thus, each $a_i$ lies in the subgroup of $C_W$ generated by $\{\sigma_I \svert  I\subseteq I_{1,n-1}\}$.
Since the Coxeter diagram on $I_{1,n-1}=\{s_1,\dots,s_{n-1}\}$ is a path, the Coxeter system $(W_{I_{1,n-1}},I_{1,n-1})$ is the standard Coxeter system of type $A_{n-1}(=A_{2m})$.
Therefore, by the same argument as in type $A_{n-1}$, we have $a_i \in PC_W$ for every $i=1, \dots, m$.

Next, assume that $n=2m$.
For $i=1, \dots, m-1$, every symbol $\Sigma_{p,q}$ appearing in the defining $a_i$ satisfies $q \le 2i \le 2m-2=n-2$. Hence, each $a_i$ lies in the subgroup of $C_W$ generated by $\{\sigma_I \svert  I\subseteq I_{1,n-2}\}$, and therefore also lies in the subgroup generated by $\{\sigma_I \svert  I\subseteq I_{1,n-1}\}$. Then the same argument as in type $A_{n-1}$ shows that $a_i \in PC_W$ for every $i=1, \dots, m$.
For the remaining case $a_m=b^2$, we may write
\[
g_W(b)=g_W(\Sigma_{1,n-1}\,\Sigma_{2,n}\,\sigma_J)=\omega_A\omega_B\omega_C,
\]
where
\begin{equation}
\label{equ:ABC}
A=I_{1,n-1},\quad B=I_{2,n},\quad C=J=\{s_1,\dots,s_{n-2},s_n\}.
\end{equation}
As described in Section~\ref{subsec:finitecoxetergroups}, we use the signed-permutation model for type $D_n$, but with our indexing convention.
Recall that one can represent the generators using the window notation:
\[
s_j=[1,\ldots,n-j-1,n-j+1,n-j,n-j+2,\ldots,n]\quad (1\le j\le n-1),
\]
and $s_n=[-2,-1,3,\ldots n]$.

We compute $\omega_A,\omega_B,\omega_C$ in this model.
First, $W_A=\langle s_1,\ldots,s_{n-1}\rangle$ is the subgroup of signed permutations that do not change any signs.
Hence, it is naturally identified with the symmetric group $S_n$.
Therefore, the longest element of $W_A$ reverses the order of the entries.
Thus, we have
\begin{equation}
\label{equ:omega_A}
\omega_A=[n,n-1,\ldots,1].
\end{equation}
Next, $W_B=\langle s_2,\ldots,s_n\rangle$ fixes the first entry and is
naturally identified with a Coxeter group of type $D_{n-1}$ on the coordinates $2,\ldots,n$. 
Since $n-1$ is odd, its longest element changes the signs of the coordinates $2,\ldots,n-1$ and fixes the coordinate $n$. Therefore,
\[
\omega_B=[1,-2,-3,\ldots,-(n-1),n].
\]
Finally, consider $W_C=\langle s_1,\ldots,s_{n-2},s_n\rangle$.
With our indexing convention for type $D_n$, the generator $s_j$ $(1\le j\le n-2)$ interchanges $n-j$ and $n-j+1$, while $s_n=[-2,-1,3,\ldots,n]$ interchanges $2$ and $-1$.
Thus, $W_C$ is naturally identified with the symmetric group on the set $\{n,n-1,\ldots, 2,-1\}$ with respect to this ordering, where $s_1,\ldots,s_{n-2},s_n$ act as the adjacent transpositions.
(Note that $s_n(2)=-1$ and $s_n(-1)=2$.)
Hence, $W_C$ is a Coxeter group of type $A_{n-1}$, and the longest element of $W_C$ reverses this ordered set.
Therefore,
\[
\omega_C=[-n,n-1,n-2,\ldots,2,-1].
\]

We now compute $\omega_A\omega_B\omega_C$. We use the convention that products act from right to left.
For $k=1$, we have
\[
\omega_C(1)=-n,\qquad \omega_B(-n)=-n,\qquad \omega_A(-n)=-1.
\]
Thus, $(\omega_A\omega_B\omega_C)(1)=-1$.
For $2\le k\le n-1$, we have
\[
\omega_C(k)=n+1-k,\qquad
\omega_B(n+1-k)=-(n+1-k),
\]
and hence $(\omega_A\omega_B\omega_C)(k)=\omega_A(-(n+1-k))=-k$.
Finally,
\[
\omega_C(n)=-1,\qquad \omega_B(-1)=-1,\qquad \omega_A(-1)=-n.
\]
Thus, $(\omega_A\omega_B\omega_C)(n)=-n$.
Therefore, $\omega_A\omega_B\omega_C=[-1,-2,\ldots,-n]$.
Recall that since $n$ is even, the longest element of the Coxeter group of type $D_n$ (see Section~\ref{subsec:finitecoxetergroups}) is
\[
w_0=[-1,-2,\ldots,-n].
\]
Thus, $\omega_A\omega_B\omega_C$ is the longest element of $W$.
By Lemma~\ref{lem:central-longest-element}, it is central and has order two. Hence $g_W(a_m)=g_W(b^2)=1$, and therefore, $a_m \in \ker(g_W)=PC_W$.

\medskip

(iv) {\bf Types $E_6$, $E_7$ and $E_8$}: We show that $a_1,a_2,a_3 \in PC_W$ in types $E_6$ and $E_7$, and that $a_1,a_2,a_3,a_4 \in PC_W$ in type $E_8$.
Since $\omega_{I_{1,1}}=s_1$ and $\omega_{I_{2,2}}=s_2$, we get $g_W(a_1)=(s_2s_1)^3=1$ since $m(s_1,s_2)=3$.
Moreover, all letters in $a_2$ are in the standard parabolic subgroup $W_{I_{1,4}}$ of type $A_4$.
Therefore, $g_W(a_2)=1$ follows from the calculation of type $A_n$.

All letters in $a_3$ belong to the standard parabolic subgroup $W_{I_{1,6}}$ whose Coxeter diagram has type $E_6$.
Let $y=\omega_{I_{1,5}}\omega_{J}\omega_{I_{2,6}}\in W_{I_{1,6}}$. 
The SageMath computation described in Section~\ref{subsec:finitecoxetergroups} gives $ys_1y^{-1}=s_6$.
Then we can rewrite
\[
g_W(a_3)
= s_6\omega_{I_{1,5}}\omega_J\ \omega_{I_{2,6}}s_1\omega_{I_{2,6}}\omega_J\omega_{I_{1,5}}
= s_6ys_1y^{-1}
=1.
\]
This implies $a_3\in PC_W$.
Moreover, using the direct SageMath computation described in Section~\ref{subsec:finitecoxetergroups}, we have $g_W(a_4)=(\omega_{I_{1,7}}\,s_8)^6=1$.
Thus, $a_4\in PC_W$.

\medskip
(v) {\bf Type $F_4$}: Since $m(s_1, s_2)=3$, we have $g_W(a_1)=(\omega_{I_{1,1}}\omega_{I_{2,2}})^3=(s_1s_2)^3=1$.
Hence $a_1\in PC_W$.
For $a_2$, as in the proof of (iv), the SageMath computation gives $g_W(a_2)=(\omega_{I_{1,3}} s_4)^6=1$, and thus, $a_2\in PC_W$.

\medskip
(vi) {\bf Types $H_3$ and $H_4$}: Since $\omega_{I_{1,1}}=s_1$ and $\omega_{I_{2,2}}=s_2$, we get $g_W(a_1)=(s_1s_2)^5=1$ since $m(s_1, s_2)=5$. Hence $a_1\in PC_W$.
For $a_2$, as in (iv) and (v), the SageMath computation gives $g_W(a_2)=(\omega_{I_{1,3}}s_4)^{10}=1$.
Therefore, we have $a_2\in PC_W$.

\medskip
(vii) {\bf Type $I_2(p)$ for $p \ge 5$}: This is immediate since $g_W(a_1)=(s_1s_2)^p=1$.
\end{proof}

Recall from Section~\ref{subsec:Yus-embedding} that, for a Coxeter system $(W,S)$, Yu constructed a monomorphism $PC_W\hookrightarrow \widehat{W}$, which we denote by
\[
\Phi:PC_W\hookrightarrow \widehat{W}.
\]
Now we compute the images $\Phi(a_i)$ for each $i$.

\begin{lemma}\label{lem:An-PQR}
The images $\Phi(a_i)$ are as follows.

\medskip 

{\rm (i) {\bf Type $A_n$ for $n \ge 1$}:} We have 
\[
\Phi(a_i)=(\tau_{P_i}\tau_{Q_i}\tau_{R_i})^2\in \widehat W \qquad (1\le i\le m),
\]
where
\begin{equation}
\label{equ:P_iQ_iR_i}
P_i=W_{I_{2i,2i}},\quad
Q_i=\omega_{I_{2i,2i}}\,W_{I_{1,2i-1}}\,\omega_{I_{2i,2i}}^{-1},\quad
R_i=\omega_{I_{2i,2i}}\omega_{I_{1,2i-1}}\,W_{I_{2,2i}}\,
(\omega_{I_{2i,2i}}\omega_{I_{1,2i-1}})^{-1}.
\end{equation}

\medskip 

{\rm (ii) {\bf Type $B_n$ for $n \ge 2$}:} We have
\[
\Phi(a_i)=\prod_{k=0}^{3}\tau_{U_{i,k}}\tau_{V_{i,k}}
\qquad (1\le i\le m),
\]
where, for \(k=0,1,2,3\),
\[
\begin{aligned}
U_{i,k}
&=(\omega_{I_{1,2i-1}}s_{2i})^k\,
W_{I_{1,2i-1}}\,
(\omega_{I_{1,2i-1}}s_{2i})^{-k},\\
V_{i,k}
&=(\omega_{I_{1,2i-1}}s_{2i})^k
\omega_{I_{1,2i-1}}\,
W_{I_{2i,2i}}\,
\omega_{I_{1,2i-1}}^{-1}
(\omega_{I_{1,2i-1}}s_{2i})^{-k}.
\end{aligned}
\]

\medskip

{\rm (iii) {\bf Type $D_n$ for $n \ge 4$}:} The formula in {\rm (i)} applies to all $a_i$ when $n=2m+1$, and to $a_1, \dots, a_{m-1}$ when $n=2m$. For the remaining element $a_m$, we have
\[
\Phi(a_m)=(\tau_P\,\tau_Q\,\tau_R)^2
\quad\text{in }\widehat{W},
\]
where, with $A,B,C$ are defined in~{\rm (\ref{equ:ABC})},
\[
P=W_A,\quad
Q=\omega_A\,W_B\,\omega_A^{-1},\quad
R=W_C.
\]

\medskip

{\rm (iv) {\bf Types $E_6$, $E_7$, and $E_8$}:} We have
\[
\Phi(a_1)=(\tau_{P_1}\tau_{Q_1}\tau_{R_1})^2 \qquad \text{and} \qquad \Phi(a_2)=(\tau_{P_2}\tau_{Q_2}\tau_{R_2})^2,
\]
where
\[
\begin{aligned}
P_1&=W_{I_{2,2}}=\langle s_2\rangle, \qquad
Q_1=\omega_{I_{2,2}}W_{I_{1,1}}\omega_{I_{2,2}}^{-1}
=\langle s_2s_1s_2\rangle, \\
R_1&=\omega_{I_{2,2}}\omega_{I_{1,1}}W_{I_{2,2}}
(\omega_{I_{2,2}}\omega_{I_{1,1}})^{-1}
=\langle s_1\rangle, \\
P_2&=W_{I_{4,4}}=\langle s_4\rangle,\qquad
Q_2=\omega_{I_{4,4}}W_{I_{1,3}}\omega_{I_{4,4}}^{-1}, \\
R_2&=\omega_{I_{4,4}}\omega_{I_{1,3}}\,W_{I_{2,4}}\,
(\omega_{I_{4,4}}\omega_{I_{1,3}})^{-1}.
\end{aligned}
\]
For $a_3$, we have 
\[ 
\Phi(a_3)= \prod_{k=1}^{8} \tau_{\,u_{k-1}W_{I_k}u_{k-1}^{-1}},
\] 
where $(I_1,\ldots,I_8) = (I_{6,6},I_{1,5},J,I_{2,6},I_{1,1},I_{2,6},J,I_{1,5})$, and 
\[ 
u_0:=1,\qquad u_k:=\omega_{I_1}\cdots\omega_{I_k} \quad (1\le k\le 8).
\]
Note that the fifth factor in this product is $\tau_{\langle s_6\rangle}$. 
In type $E_8$, we have 
\[ 
\Phi(a_4)=\prod_{k=0}^{5}\tau_{U_k}\tau_{V_k} \quad\text{in } \widehat W, \] 
where, for $k=0,1,\dots,5$, 
\[ 
\begin{aligned} 
U_k&=(\omega_{I_{1,7}}s_8)^k\,W_{I_{1,7}}\, (\omega_{I_{1,7}}s_8)^{-k},\\ V_k&=(\omega_{I_{1,7}}s_8)^k\omega_{I_{1,7}}\, W_{I_{8,8}}\,\omega_{I_{1,7}}^{-1} (\omega_{I_{1,7}}s_8)^{-k}. \end{aligned} 
\]

\medskip

{\rm (v) {\bf Type $F_4$}:} We have
\[
\Phi(a_1)=\prod_{k=0}^{2}\tau_{U^{(1)}_k}\tau_{V^{(1)}_k},
\qquad
\Phi(a_2)=\prod_{k=0}^{5}\tau_{U^{(2)}_k}\tau_{V^{(2)}_k}
\quad\text{in }\widehat W,
\]
where, for $k\ge 0$,
\[
\begin{aligned}
U^{(1)}_k&=(s_1s_2)^k\,W_{I_{1,1}}\,(s_1s_2)^{-k},\qquad
V^{(1)}_k=(s_1s_2)^k\,s_1\,W_{I_{2,2}}\,s_1\,(s_1s_2)^{-k}, \\
U^{(2)}_k&=(\omega_{I_{1,3}}s_4)^k\,W_{I_{1,3}}\,
(\omega_{I_{1,3}}s_4)^{-k},\qquad
V^{(2)}_k=(\omega_{I_{1,3}}s_4)^k\,\omega_{I_{1,3}}\,
W_{I_{4,4}}\,\omega_{I_{1,3}}^{-1}\,
(\omega_{I_{1,3}}s_4)^{-k}.
\end{aligned}
\]

\medskip

{\rm (vi) {\bf Types $H_3$ and $H_4$}:} We have
\[
\Phi(a_1)=\prod_{k=0}^{4}\tau_{U^{(1)}_k}\tau_{V^{(1)}_k}
\quad\text{in }\widehat W,
\]
where, for $k\ge 0$,
\[
U^{(1)}_k=(s_1s_2)^k\,W_{I_{1,1}}\,(s_1s_2)^{-k},\qquad
V^{(1)}_k=(s_1s_2)^k\,s_1\,W_{I_{2,2}}\,s_1\,(s_1s_2)^{-k}.
\]
In type $H_4$, we also have
\[
\Phi(a_2)=\prod_{k=0}^{9}\tau_{U^{(2)}_k}\tau_{V^{(2)}_k}
\quad\text{in }\widehat W,
\]
where, for $k\ge 0$,
\[
U^{(2)}_k=(\omega_{I_{1,3}}s_4)^k\,W_{I_{1,3}}\,
(\omega_{I_{1,3}}s_4)^{-k},\qquad
V^{(2)}_k=(\omega_{I_{1,3}}s_4)^k\,\omega_{I_{1,3}}\,
W_{I_{4,4}}\,\omega_{I_{1,3}}^{-1}\,
(\omega_{I_{1,3}}s_4)^{-k}.
\]

\medskip

{\rm (vii) {\bf Type $I_2(p)$ for $p \ge 5$}:} We have
\[
\Phi(a_1)=\prod_{k=0}^{p-1}\tau_{U_k}\tau_{V_k}
\quad\text{in }\widehat W,
\]
where, for $k=0,1,\dots,p-1$,
\[
U_k=(s_1s_2)^k\,W_{I_{1,1}}\,(s_1s_2)^{-k},\qquad
V_k=(s_1s_2)^k s_1\,W_{I_{2,2}}\,s_1^{-1}(s_1s_2)^{-k}.
\]
\end{lemma}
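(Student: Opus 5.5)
The plan is to apply Yu's formula for $\Phi$ from Section~\ref{subsec:Yus-embedding} directly to the defining words of the $a_i$. Lemma~\ref{lem:An-ai-in-PCW} shows that each $a_i\in PC_W$, so $\Phi(a_i)$ is defined. It equals $\prod_{k}\tau_{u_{k-1}W_{I_k}u_{k-1}^{-1}}$, where $u_{k-1}$ is the product of the first $k-1$ longest elements. For every type the work therefore reduces to listing the prefixes $u_{k-1}\in W$ and simplifying the resulting conjugates.

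For the words of the form $(\sigma_X\sigma_Y)^N$ (types $B_n$, $F_4$, $H_3$, $H_4$, $I_2(p)$, and $a_1$, $a_4$ in type $E$), set $h=\omega_X\omega_Y$. The prefix before the $(2k+1)$-st letter is $h^k$, and the prefix before the $(2k+2)$-st letter is $h^k\omega_X$. This gives $U_k=h^kW_Xh^{-k}$ and $V_k=h^k\omega_XW_Y\omega_X^{-1}h^{-k}$ for $0\le k\le N-1$, which is exactly the stated product. In type $E$, I would note that $a_1=(\Sigma_{2,2}\Sigma_{1,1})^3$ gives prefixes $1,s_2,s_2s_1,s_2s_1s_2,\dots$. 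Using $s_2s_1s_2=s_1s_2s_1$, the third conjugate is $s_2s_1\langle s_2\rangle s_1s_2=\langle s_1\rangle$, and the fourth, fifth and sixth conjugates repeat the first three. This yields $(\tau_{P_1}\tau_{Q_1}\tau_{R_1})^2$.

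For the six-letter words $a_i=\sigma_{I_1}\sigma_{I_2}\sigma_{I_3}\sigma_{I_4}\sigma_{I_3}\sigma_{I_2}$ (type $A$, type $D$ with $i<m$, and $a_2$ in type $E$), the first three conjugates are $P_i,Q_i,R_i$ by definition. For the last three, I would use the identity from the proof of Lemma~\ref{lem:An-ai-in-PCW}: the prefix $u_3=\omega_{I_1}\omega_{I_2}\omega_{I_3}$ satisfies $u_3\omega_{I_4}u_3^{-1}=\omega_{I_1}$, so $u_4=\omega_{I_1}u_3$. The fourth conjugate is $u_3W_{I_4}u_3^{-1}$. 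Since $W_{I_4}$ has order two and $u_3\omega_{I_4}u_3^{-1}=\omega_{I_1}$, this conjugate is $W_{I_1}=P_i$. The fifth and sixth prefixes are then $\omega_{I_1}u_3\omega_{I_3}=\omega_{I_1}\omega_{I_1}\omega_{I_2}=\omega_{I_2}$ up to the $\omega_{I_1}$ factor. More carefully, $u_4=\omega_{I_1}\omega_{I_1}\omega_{I_2}\omega_{I_3}\omega_{I_4}\cdots$, and simplifying shows $u_4=\omega_{I_1}\omega_{I_2}$ and $u_5=\omega_{I_1}\omega_{I_2}\omega_{I_3}$ modulo elements normalizing the relevant parabolics. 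I would check this by noting that $\omega_{I_4}$ and $\omega_{I_1}$ have order two, so $u_4=u_3\omega_{I_4}=\omega_{I_1}u_3$. The fifth conjugate is then $\omega_{I_1}u_3W_{I_3}u_3^{-1}\omega_{I_1}$, where $u_3W_{I_3}u_3^{-1}=\omega_{I_1}\omega_{I_2}W_{I_3}\omega_{I_2}\omega_{I_1}$ because $\omega_{I_3}$ normalizes $W_{I_3}$. This gives $\omega_{I_2}W_{I_3}\omega_{I_2}$.

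That last expression is the obstacle. To get $Q_i$ and $R_i$ back I must be careful, and I expect the correct bookkeeping to be that $u_4=\omega_{I_1}u_3$, so that conjugating by $u_4$ equals conjugating by $u_3$ and then by $\omega_{I_1}$. I would recheck this against the known answer $(\tau_P\tau_Q\tau_R)^2$: the fifth and sixth conjugates must equal $Q_i$ and $R_i$. The natural route is the equivalent relation $u_3W_{I_3}u_3^{-1}$ versus $u_1W_{I_2}u_1^{-1}$, which I would verify concretely in the permutation model (reversals of intervals), computing each conjugate as an explicit parabolic subgroup of $S_{n+1}$.

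For $a_m=b^2$ in type $D_{2m}$, $g_W(b)=w_0$ is central by the computation in Lemma~\ref{lem:An-ai-in-PCW}. The second copy of $b$ therefore has prefix $w_0$, which fixes every parabolic by conjugation, so the last three factors repeat the first three. The first three are $W_A$, $\omega_AW_B\omega_A$, and $\omega_A\omega_BW_C\omega_B\omega_A$, and I would verify the last equals $W_C$ in the signed-permutation model. For $a_3$ in type $E$, the product formula is just the definition of $\Phi$, and the claim that the fifth factor is $\langle s_6\rangle$ follows from $u_4s_1u_4^{-1}=s_6$, which is the SageMath identity $ys_1y^{-1}=s_6$ with $u_4=s_6y$, and $s_6$ commutes appropriately.

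The main obstacle is this conjugation bookkeeping for the six-letter and $a_3$ words; the power words are routine.
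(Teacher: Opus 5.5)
Your overall route is the paper's: substitute the defining words into Yu's formula and read off the prefixes $h^k$ and $h^k\omega_X$ for the power words. For $a_m$ in type $D_{2m}$, you use that $g_W(b)=w_0$ is central, so the second copy of $b$ repeats the first three factors. For the fifth factor of $\Phi(a_3)$, you use $u_4=s_6y$.

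The step you left as ``the obstacle'' is mishandled in your write-up.

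\begin{itemize}
\item Your formulas $u_4=\omega_{I_1}\omega_{I_2}$ and $u_5=\omega_{I_1}\omega_{I_2}\omega_{I_3}$ ``modulo normalizing elements'' are wrong. They would make the fifth factor $\tau_{R_i}$ instead of $\tau_{Q_i}$.
\item The comparison you propose, $u_3W_{I_3}u_3^{-1}$ versus $u_1W_{I_2}u_1^{-1}$, is the statement $R_i=Q_i$. That is false: in $S_{n+1}$ these are the symmetric groups on $\{1,\dots,2i\}$ and on $\{1,\dots,2i-1,2i+1\}$, respectively.
\end{itemize}

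The correct bookkeeping follows at once from your own identity $u_4=\omega_{I_1}u_3$. It gives exactly $u_4=\omega_{I_2}\omega_{I_3}$ and $u_5=\omega_{I_2}$. Hence the last three conjugates are $P_i$, then $\omega_{I_2}W_{I_3}\omega_{I_2}$, then $\omega_{I_2}W_{I_2}\omega_{I_2}=W_{I_2}$.

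\begin{itemize}
\item The sixth conjugate equals $R_i$ if and only if $R_i=W_{I_{1,2i-1}}$.
\item The fifth equals $Q_i=\omega_{I_1}W_{I_2}\omega_{I_1}$ if and only if, after conjugating by the involution $\omega_{I_1}$, again $R_i=W_{I_{1,2i-1}}$.
\end{itemize}

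This single identity holds because $s_{2i}\omega_{I_{1,2i-1}}$ maps $\{2,\dots,2i+1\}$ onto $\{1,\dots,2i\}$: first reverse $\{1,\dots,2i\}$, then swap $2i$ and $2i+1$. Conjugation therefore carries the symmetric group on the first set to that on the second. The same computation applies inside the type-$A$ parabolics $W_{I_{1,n-1}}$ in type $D_n$ and $W_{I_{1,4}}$ in type $E$. This is the ``straightforward check in $S_{n+1}$'' that the paper invokes.

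Finally, for $R=W_C$ in type $D_{2m}$ you do not need the signed-permutation computation. Since $\omega_A\omega_B\omega_C=w_0$ is central, $\omega_A\omega_B=w_0\omega_C$, and hence $R=\omega_CW_C\omega_C=W_C$. This is how the paper argues.
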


\begin{proof}
In type $A_n$ for $n \ge 1$, we apply Yu's embedding formula in Section~\ref{subsec:Yus-embedding}.
Then the first three factors are exactly $\tau_{P_i}\tau_{Q_i}\tau_{R_i}$. 
A straightforward check in $W\cong S_{n+1}$ shows that the remaining three conjugates occurring in $\Phi(a_i)$ coincide again with $\tau_{P_i}\tau_{Q_i}\tau_{R_i}$, in this order. Therefore, 
$\Phi(a_i)=(\tau_{P_i}\tau_{Q_i}\tau_{R_i})^2$.

In types $B_n$ for $n \ge 2$, $F_4$, $H_3$, $H_4$ and $I_2(p)$ for $p \ge 5$, the formulas are obtained by direct substitution into Yu's embedding formula for $\Phi$.

In type $D_n$ for $n \ge 4$, applying Yu's embedding formula to $(\sigma_A\sigma_B\sigma_C)(\sigma_A\sigma_B\sigma_C)$, we obtain
\[
\Phi(a_m)=\tau_{W_A}\,\tau_{\omega_A W_B\omega_A^{-1}}\,\tau_{\omega_A\omega_B W_C\omega_B^{-1}\omega_A^{-1}}\cdot
\tau_{uW_Au^{-1}}\,\tau_{u(\omega_A W_B\omega_A^{-1})u^{-1}}\,\tau_{u(\omega_A\omega_B W_C\omega_B^{-1}\omega_A^{-1})u^{-1}},
\]
where $u=\omega_A\omega_B\omega_C$.
We now simplify $R$ and the last three parabolic subgroups.
As shown in the proof of Lemma~\ref{lem:An-ai-in-PCW}(iii), $u$ is central. Since $\omega_C^2=1$, we have $\omega_A\omega_B=u\omega_C^{-1}=u\omega_C$.
Therefore,
\[
R=(\omega_A\omega_B)\,W_C\,(\omega_A\omega_B)^{-1}
=(u\omega_C)\,W_C\,(u\omega_C)^{-1}
=\omega_C W_C\omega_C^{-1}=W_C,
\]
where the last equality follows from $\omega_C\in W_C$.
Applying Yu's formula to $a_m$, the first three factors are $\tau_P\tau_Q\tau_R$, and hence
\[
\Phi(a_m)=\tau_P\,\tau_Q\,\tau_R\cdot \tau_{uPu^{-1}}\,\tau_{uQu^{-1}}\,\tau_{uRu^{-1}}.
\]
Moreover, since $u$ is central, $uPu^{-1}=P$, $uQu^{-1}=Q$, and $uRu^{-1}=R$.
Consequently, $\Phi(a_m)=(\tau_P\,\tau_Q\,\tau_R)^2$, as claimed.

In types $E_6$, $E_7$ and $E_8$, the formula for $\Phi(a_3)$ follows directly from Yu's formula. It remains only to identify the fifth factor.
The prefix before the fifth letter is $u_4=s_6\omega_{I_{1,5}}\omega_J\omega_{I_{2,6}}=s_6y$.
By the identity $ys_1y^{-1}=s_6$ from the proof of
Lemma~\ref{lem:An-ai-in-PCW}(iv), we obtain
\[ 
u_4\,s_1\,u_4^{-1} = s_6\,(y s_1 y^{-1})\,s_6 = s_6\,s_6\,s_6 = s_6.
\]
Thus, $u_4W_{I_{1,1}}u_4^{-1}=\langle s_6\rangle$, so the fifth factor is $\tau_{\langle s_6\rangle}$.
The proof for $\Phi(a_4)$ follows directly from Yu's formula, since $a_4=(\sigma_{I_{1,7}}\sigma_{I_{8,8}})^6$.
\end{proof}

Once the torsion-freeness of $PC_W$ is known, it suffices to prove that each $a_i$ is nontrivial to show that $a_i$ has infinite order.
This torsion-freeness is proved in Corollary~\ref{cor:pcw-torsionfree}.
It is straightforward to prove that a word represents the identity using van Kampen diagrams, since one only needs to construct a suitable diagram. 
In the present situation, however, the argument is more delicate because we must prove that no van Kampen diagram with the prescribed boundary label exists.
Therefore, we instead prove that each $\Phi(a_i)$ has infinite order in
$\widehat W$.
To prove this, the following elementary fact will be used several times.

\begin{remark}
\label{rmk:rho_A}
Let $\mathcal W$ be a right-angled Coxeter group whose defining graph is $\mathcal S$.
For any induced subgraph $\mathcal A$ of $\mathcal S$, the map
\[
\rho_{\mathcal A}:\mathcal W \to \langle v\mid v\in V(\mathcal A)\rangle.
\]
defined on generators by
\[
\rho_{\mathcal A}(v)=
\begin{cases}
v & (v\in V(\mathcal A)),\\
1 & (v\notin V(\mathcal A)),
\end{cases}
\]
extends to a well-defined homomorphism.
\end{remark}

\begin{lemma}\label{lem:An-infinite-order}
For each $i$, the element $a_i$ has infinite order in $PC_W$.
\end{lemma}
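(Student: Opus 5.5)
Since $\Phi$ is injective on $PC_W$ and $a_i\in PC_W$ by Lemma~\ref{lem:An-ai-in-PCW}, it suffices to show that $\Phi(a_i)$ has infinite order in $\widehat W$. The plan avoids the torsion-freeness of $PC_W$ altogether. In each case we use Lemma~\ref{lem:An-PQR} to choose two vertices $\tau_U,\tau_V$ that occur in $\Phi(a_i)$ and are \emph{not} adjacent in the defining graph of $\widehat W$, meaning $m_{UV}=\infty$. We then apply the retraction $\rho_{\mathcal A}$ of Remark~\ref{rmk:rho_A}, with $\mathcal A$ the induced subgraph on $\{U,V\}$. Its target is the infinite dihedral group $\langle \tau_U,\tau_V\rangle\cong \mathbb Z_2*\mathbb Z_2$. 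Inside it, the image of $\Phi(a_i)$ is obtained by deleting every letter other than $\tau_U,\tau_V$ from the word in Lemma~\ref{lem:An-PQR}. If this image freely reduces to a nontrivial even-length alternating word such as $(\tau_U\tau_V)^k$ with $k\ge1$, it has infinite order. Hence so do $\Phi(a_i)$ and $a_i$.

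\textbf{Type $A_n$ and the parallel cases.} For types $A_n$, $D_{2m+1}$, the first $m-1$ elements in $D_{2m}$, and $a_1,a_2$ in $E_n$, we have $\Phi(a_i)=(\tau_P\tau_Q\tau_R)^2$. We take $U=P_i=\langle s_{2i}\rangle$ and $V=Q_i=s_{2i}W_{I_{1,2i-1}}s_{2i}$. Since $s_{2i}$ fixes $s_1,\dots,s_{2i-2}$ and sends $s_{2i-1}$ to $s_{2i}s_{2i-1}s_{2i}$, the group $Q_i$ is the symmetric group on $\{1,\dots,2i-1,2i+1\}$. It neither contains nor is contained in $\langle(2i\ 2i+1)\rangle$. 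It also does not centralize it, because conjugation by $Q_i$ moves $2i+1$. So $m_{PQ}=\infty$.

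We must also check that $R_i\ne P_i,Q_i$, so that no extra letters survive the retraction. In $S_{n+1}$, $R_i$ is the conjugate of $W_{I_{2,2i}}$ by the element that reverses $\{1,\dots,2i\}$ and then applies $(2i\ 2i+1)$. It therefore acts on a set of size $2i$, and for $i\ge1$ it differs from both $P_i$ and $Q_i$ by a direct support computation. The image is then $(\tau_P\tau_Q)^2$, which has infinite order.

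\textbf{The remaining cases.} For $a_m$ in $D_{2m}$ we take $P=W_A$ and $R=W_C$. These are two distinct rank-$(n-1)$ standard parabolic subgroups, so they are not nested. They are also not orthogonal, since they intersect nontrivially. The middle subgroup $Q$ is a conjugate of $W_B$, and we need $Q\ne P,R$; this follows from a short signed-permutation check, since $Q$ is of type $D_{n-1}$. The image is then $(\tau_P\tau_R)^2$.

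For the products $\prod_k\tau_{U_k}\tau_{V_k}$ (types $B_n$, $F_4$, $H_3$, $H_4$, $I_2(p)$, and $a_4$ in $E_8$) we retract onto $\{U_0,V_0\}=\{W_{I_{1,q}},\,\omega W_{I_{q+1,q+1}}\omega^{-1}\}$. These are respectively a rank-$q$ parabolic and a reflection subgroup not contained in it, and they do not commute. One must then determine which other $U_k,V_k$ coincide with $U_0$ or $V_0$.

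\textbf{Expected obstacle.} The main difficulty is this bookkeeping. A conjugate $U_k$ may coincide with $V_0$, or may have order that makes the retracted word collapse. In type $I_2(p)$, for instance, the $2p$ reflection subgroups $U_k,V_k$ run through the $p$ reflections, each appearing twice. There we retract instead onto the pair of consecutive reflection lines and count the surviving occurrences and their cyclic order, aiming for an image $(\tau\tau')^{j}$ with $j\neq 0$.

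For $a_3$ in $E_n$, Lemma~\ref{lem:An-PQR} shows that the fifth factor is $\tau_{\langle s_6\rangle}$, and the first factor is also $\tau_{\langle s_6\rangle}$. We pair $\langle s_6\rangle$ with the second factor $s_6W_{I_{1,5}}s_6$, which is nonadjacent to it in the same way as in type $A$. For the remaining factors we verify by SageMath that none coincides with $\langle s_6\rangle$ or with $s_6W_{I_{1,5}}s_6$, except in positions that keep the retracted word a nontrivial alternating word.
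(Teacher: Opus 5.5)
Your strategy is the paper's: pass to $\Phi$, retract onto two non-adjacent vertices of $\widehat W$, and read off $(\tau_U\tau_V)^2$. The type $A$ and $D$ parts are fine.

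\begin{itemize}
\item In type $A$, retracting onto $\{P_i,Q_i\}$ and noting $R_i=\mathrm{Sym}\{1,\dots,2i\}\neq P_i,Q_i$ works as well as the paper's three-generator argument.
\item In $D_{2m}$, your pair $\{P,R\}$ is a clean alternative to the paper's $\{P,Q\}$. Justify $Q\neq P,R$ by noting that $Q=\omega_AW_B\omega_A^{-1}$ fixes a coordinate, while $P=W_A$ and $R=W_C$ permute the coordinates (up to sign) transitively. The reason ``$Q$ has type $D_{n-1}$'' distinguishes nothing when $n=4$, since $D_3=A_3$.
\end{itemize}

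The genuine gap is in the products $\prod_{k=0}^{N-1}\tau_{U_k}\tau_{V_k}$ ($B_n$, $a_2$ in $F_4$ and $H_4$, $a_4$ in $E_8$). You stop at the bookkeeping you flag as the obstacle, and that bookkeeping decides the outcome.

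\begin{itemize}
\item Let $h=\omega_{I_{1,q}}s_{q+1}$, of even order $N$. Then $h^{N/2}$ is central in the dihedral group $\langle\omega_{I_{1,q}},s_{q+1}\rangle$, so $V_{N/2}=V_0$ automatically.
\item If $h^{N/2}$ did not normalize $W_{I_{1,q}}$ (and nothing else coincided), then $\rho_{\{U_0,V_0\}}(\Phi(a))$ would be $\tau_{U_0}\tau_{V_0}\tau_{V_0}=\tau_{U_0}$, an involution.
\item The paper supplies exactly this input. It shows $h^{N/2}$ is the central longest element in $F_4$ and $H_4$ and normalizes $W_{I_{1,7}}$ in $E_8$. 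It also checks that $U_k\neq U_0$ and $V_k\neq V_0$ for the other $k$.
\item Your $I_2(p)$ remark does not transfer, since here the $U_k$ are rank-$q$ parabolics rather than reflections.
\end{itemize}

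The same issue affects $a_3$. With your pair, the retracted word is $\tau_{\langle s_6\rangle}\tau_V[\tau_V]\tau_{\langle s_6\rangle}[\tau_V][\tau_V]$, where each bracketed letter is present according as $P_{3,4}$, $P_{3,6}$, $P_{3,8}$ equals $V$. So you need the computed facts $P_{3,4}=P_{3,8}=W_{I_{1,5}}$ and $P_{3,6}=V$. Your proposal presupposes these rather than establishing them.

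For the elements $(\sigma_I\sigma_J)^N$ with $I,J$ neither nested nor orthogonal, there is a computation-free repair.

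\begin{itemize}
\item The involutions $\sigma_I,\sigma_J$ flip the edges $(1,\sigma_I)$ and $(1,\sigma_J)$, hence flip their dual hyperplanes $H_I,H_J$.
\item These hyperplanes are distinct. They are not transverse, because the edges span no square (Lemma~\ref{lem:squares}).
\item Write $H^+$ for the halfspace containing $1$. Then $H_I^-\cap H_J^-=\emptyset$, so $\sigma_I\sigma_J(H_J^+)=\sigma_I(H_J^-)\subseteq H_I^-\subsetneq H_J^+$.
\item Hence $\sigma_I\sigma_J$, and therefore $a$, has infinite order.
\end{itemize}
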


\begin{proof}
For each type, to show that each $a_i$ has infinite order, it suffices to show that each $\Phi(a_i)$ has infinite order in $\widehat{W}$.

\medskip
(i) {\bf Type $A_n$ for $n \ge 1$}: Assume $(W,S)$ is of type $A_n$.
We need to understand the parabolic subgroups $P_i$, $Q_i$, and $R_i$ defined in~{\rm (\ref{equ:P_iQ_iR_i})} of Lemma~\ref{lem:An-PQR}.
To observe that $\Phi(a_i)=(\tau_{P_i}\tau_{Q_i}\tau_{R_i})^2$ has infinite order, we first prove that, for each $1\le i\le m$, the parabolic subgroups $P_i,Q_i,R_i$ defined in~{\rm (\ref{equ:P_iQ_iR_i})} of Lemma~\ref{lem:An-PQR} are pairwise neither orthogonal nor nested.
It is then immediate to see that the element $(\tau_{P_i}\tau_{Q_i}\tau_{R_i})^2\in \langle \tau_{P_i},\tau_{Q_i},\tau_{R_i}\rangle\cong \mathbb{Z}_2*\mathbb{Z}_2*\mathbb{Z}_2$ has infinite order.

We first consider the case $i=1$.
Identifying $W$ with $S_{n+1}$, we have
\[
P_1=\langle(2\,3)\rangle,\qquad
Q_1=\langle(1\,3)\rangle,\qquad
R_1=\langle(1\,2)\rangle.
\]
These three subgroups are pairwise distinct and their generators are pairwise noncommuting.
Since each subgroup has order two, they are pairwise neither orthogonal nor nested.

We now assume that $i\geq 2$, and we rule out nesting and the orthogonality for each of the three pairs $(P_i, Q_i)$, $(Q_i, R_i)$ and $(P_i, R_i)$.
Hence we consider the three pairs separately.

\medskip
\noindent {\bf Case A.} {\em The pair $(P_i,Q_i)$.}

\medskip In this case, since $s_{2i-1} \in W_{I_{1,2i-1}}$, the subgroup $Q_i$ contains $s_{2i}s_{2i-1}s_{2i}$.
However,
\[
s_{2i}(s_{2i}s_{2i-1}s_{2i})=s_{2i-1}s_{2i} \neq s_{2i}s_{2i-1}=(s_{2i}s_{2i-1}s_{2i})s_{2i},
\]
since the edge joining $s_{2i-1}$ and $s_{2i}$ has label $3$.
Hence $s_{2i}\notin Z_W(Q_i)$, and therefore $P_i\not\subseteq Z_W(Q_i)$ which means $P_i$ and $Q_i$ are not orthogonal.

If $P_i\subseteq Q_i=s_{2i}W_{I_{1,2i-1}}s_{2i}^{-1}$, then after conjugating by $s_{2i}$, we have $P_i \subseteq W_{I_{1,2i-1}}$ $s_{2i} \in W_{I_{1,2i-1}}$. This is impossible, since $P_i=\langle s_{2i} \rangle$ and $W_{I_{1,2i-1}}=\langle s_1,\ldots,s_{2i-1}\rangle$ does not contain $s_{2i}$.
The reverse inclusion $Q_i\subseteq P_i$ is also impossible since $Q_i$ contains $s_1$.

\medskip
\noindent {\bf Case B.} {\em The pair $(Q_i,R_i)$.}

\medskip 
In this case, let $v_i=\omega_{I_{2i,2i}}\omega_{I_{1,2i-1}}
=s_{2i}\omega_{I_{1,2i-1}}$ so that $R_i=v_iW_{I_{2,2i}}v_i^{-1}$.
Since $W_{I_{2,2i-1}}\le W_{I_{2,2i}}$, we have $v_iW_{I_{2,2i-1}}v_i^{-1}\le v_iW_{I_{2,2i}}v_i^{-1}=R_i$. Moreover, since $\omega_{I_{1,2i-1}} W_{I_{2,2i-1}} \omega_{I_{1,2i-1}}^{-1}=W_{I_{1,2i-2}} \le W_{I_{1,2i-1}}$, we have
\[
v_i\,W_{I_{2,2i-1}}\,v_i^{-1}=s_{2i}\,W_{I_{1,2i-2}}\,s_{2i}\ \le\ s_{2i}W_{I_{1,2i-1}}s_{2i}=Q_i.
\]
Hence $v_i\,W_{I_{2,2i-1}}\,v_i^{-1} \in Q_i\cap R_i$, and hence $Q_i\cap R_i$ is not trivial, which means $Q_i$ and $R_i$ are not orthogonal.
Suppose that $Q_i\subseteq R_i$. Since
\[
v_i^{-1} Q_i v_i=\omega_{I_{1,2i-1}}\,W_{I_{1,2i-1}}\,\omega_{I_{1,2i-1}}^{-1}=W_{I_{1,2i-1}}
\qquad \text{and} \qquad
u^{-1}R_i u=W_{I_{2,2i}},
\]
we have $W_{I_{1,2i-1}}\subseteq W_{I_{2,2i}}$. This is impossible since
$s_1\in W_{I_{1,2i-1}}$ whereas $s_1\notin W_{I_{2,2i}}$.
The reverse inclusion is also impossible since $s_{2i}\in W_{I_{2,2i}}\setminus W_{I_{1,2i-1}}$.

\medskip
\noindent {\bf Case C.} {\em The pair $(P_i,R_i)$.}

\medskip 
We first show that $P_i$ and $R_i$ are not orthogonal.
Since $R_i=v_iW_{I_{2,2i}}v_i^{-1}$, $s_{2i}$ centralizes $R_i$ if and only if $v_i^{-1}s_{2i}v_i$ centralizes $W_{I_{2,2i}}$.
Thus, to show that $P_i\not\subseteq Z_W(R_i)$, it suffices to show that 
$v_i^{-1}s_{2i}v_i$ does not centralize $W_{I_{2,2i}}$.
Identifying $W$ with $S_{n+1}$, where
$s_j=(j\ j+1)$, we have $s_{2i}=(2i\ 2i+1)$.
Since $\omega_{I_{1,2i-1}}$ reverses $\{1,\ldots,2i\}$ and fixes $2i+1$, we obtain
\[
v_i^{-1}s_{2i}v_i=\omega_{I_{1,2i-1}}s_{2i}\omega_{I_{1,2i-1}}=(1 \ 2i+1),
\]
which does not commute with $s_{2i}$.
Hence $s_{2i}$ does not centralize $R_i$, and therefore, $P_i$ and $R_i$ are not orthogonal.

If $P_i\subseteq R_i$, then we have $v_i^{-1} P_i v_i \subseteq W_{I_{2,2i}}$.
Hence
\[
v_i^{-1} P_i v_i
=
\langle v_i^{-1} s_{2i} v_i\rangle
=
\langle (1\ 2i+1)\rangle \le W_{I_{2,2i}}.
\]
This is impossible, since $W_{I_{2,2i}}=\langle s_2,\ldots,s_{2i}\rangle$
fixes $1$, whereas $(1\ 2i+1)$ does not.
The reverse inclusion is also impossible since 
$P_i \cong \mathbb{Z}_2$ while $R_i$ is non-abelian.

\medskip
In each of the three cases, the corresponding pair is neither orthogonal nor nested.
It follows that $\Phi(a_i)=(\tau_{P_i}\tau_{Q_i}\tau_{R_i})^2$ has infinite order.
Therefore, $a_i$ has infinite order in $PC_W$.

\medskip 

(ii) {\bf Type $B_n$ for $n\geq 2$}: We first consider the case $i=1$.
Let $r=s_1s_2$. Since $m(s_1,s_2)=4$, it follows that the element $r$ has order four.
By Lemma~\ref{lem:An-PQR}(ii), we have
\[
U_{1,k}=\langle r^{2k}s_1\rangle,
\qquad
V_{1,k}=\langle r^{2k+1}s_1\rangle
\]
for $k=0,1,2,3$. Set $U=U_{1,0}=\langle s_1\rangle$ and $V=V_{1,0}=\langle s_1s_2s_1\rangle$.
The subgroups $U$ and $V$ are distinct subgroups of order two, so they are not nested. Moreover, they are not orthogonal, since $s_1(s_1s_2s_1)=s_2s_1
\neq s_1s_2=(s_1s_2s_1)s_1$.
Thus, $\langle\tau_U,\tau_V\rangle
\cong\mathbb Z_2*\mathbb Z_2$.
Since $r^4=1$, we have $U_{1,2}=U_{1,0}$ and $V_{1,2}=V_{1,0}$.
Moreover, since $r$ has order four,
$s_1, rs_1, r^2s_1, r^3s_1$ are pairwise distinct.
In particular, $U_{1,1}=U_{1,3}=\langle r^2s_1\rangle$ and $V_{1,1}=V_{1,3}=\langle r^3s_1\rangle$
are distinct from both $U$ and $V$.
Therefore,
\[
\rho_{\{U,V\}}(\Phi(a_1))
=(\tau_U\tau_V)^2,
\]
which has infinite order.
Hence $a_1$ has infinite order in $PC_W$.

We now assume $i\geq2$.
Let
\[
U:=U_{i,0}=W_{I_{1,2i-1}} \quad \text{and} \quad U':=U_{i,1}=(\omega_{I_{1,2i-1}}s_{2i})\,W_{I_{1,2i-1}}\,(\omega_{I_{1,2i-1}} s_{2i})^{-1}.
\]
We first show that $U$ and $U'$ are not orthogonal and nested.
If so, once we verify that $\rho_{\{U,U'\}}:\widehat{W} \rightarrow \langle\tau_U,\tau_{U'}\rangle\cong \mathbb{Z}_2*\mathbb{Z}_2$ defined in Remark~\ref{rmk:rho_A} sends $\Phi(a_i)$ to $(\tau_U\tau_{U'})^2$, then we can conclude that $\Phi(a_i)$ has infinite order since $(\tau_U\tau_{U'})^2$ has infinite order.

Since $s_{2i}$ is not adjacent to any generator in $I_{1,2i-2}$,
we have $s_{2i}W_{I_{1,2i-2}}s_{2i}^{-1}=W_{I_{1,2i-2}}$.
This implies that
\[
(\omega_{I_{1,2i-1}} s_{2i})W_{I_{1,2i-2}}(\omega_{I_{1,2i-1}} s_{2i})^{-1}=\omega_{I_{1,2i-1}} W_{I_{1,2i-2}} \omega_{I_{1,2i-1}}^{-1}\ \le\ W_{I_{1,2i-1}}=U,
\]
and since $W_{I_{1,2i-2}}\le W_{I_{1,2i-1}}$, we have $(\omega_{I_{1,2i-1}} s_{2i})W_{I_{1,2i-2}}(\omega_{I_{1,2i-1}} s_{2i})^{-1} \le U'$.
Thus, $U\cap U'\neq\{1\}$, and therefore, $U$ and $U'$ are not orthogonal.

Since $U$ and $U'$ have the same rank, they are nested only if $U=U'$.
However, the subgroup $U'$ contains
\[
y:=(\omega_{I_{1,2i-1}}s_{2i})s_{2i-1}(\omega_{I_{1,2i-1}}s_{2i})^{-1}=\omega_{I_{1,2i-1}}(s_{2i}s_{2i-1}s_{2i})\omega_{I_{1,2i-1}}^{-1}.
\]
Set $x=s_{2i}s_{2i-1}s_{2i}$. Since $m(s_{2i-1}, s_{2i})=3$, we have $\ell(x)=3$ and $\ell(x\,s_{2i})=\ell(s_{2i}s_{2i-1})=2<3=\ell(x)$.
If $x \in U=W_{I_{1, 2i-1}}$, then Lemma~\ref{lem:parabolic-right-descent} implies $s_{2i} \in I_{1,2i-1}$, a contradiction. Hence $x \notin U$. 
If $y \in U$, then, since $\omega_{I_{1,2i-1}}\in U$, $x=\omega_{I_{1,2i-1}}^{-1} \, y \, \omega_{I_{1,2i-1}}\in U$ contradicting $x \notin U$.
Therefore, $y \notin U$, and hence $U'\neq U$.

Thus, $U$ and $U'$ are neither orthogonal nor nested. 
By the defining relation of $\widehat{W}$, the generators $\tau_U$ and $\tau_{U'}$ do not commute.

Next, we prove that $a_i$ has infinite order.
Let $\rho_{\{U,U'\}}:\widehat{W} \rightarrow \langle\tau_U,\tau_{U'}\rangle$ 
be the homomorphism defined in Remark~\ref{rmk:rho_A}.
Then we may represent $(\omega_{I_{1,2i-1}}s_{2i})^2$ by its window notation as follows:
\[
(\omega_{I_{1,2i-1}}s_{2i})^2
=
[1, \ 2, \dots, 2i-2, \ -(2i-1), \ -2i, \ 2i+1, \dots, n].
\]
Since, in $(\omega_{I_{1,2i-1}}s_{2i})^2 U (\omega_{I_{1,2i-1}}s_{2i})^{-2}$, each conjugate of a generator of $U$ involves only the first $2i-1$ coordinates, it follows that $U_{i,2}=(\omega_{I_{1,2i-1}}s_{2i})^2 U (\omega_{I_{1,2i-1}}s_{2i})^{-2} \subseteq U$. Since $U=W_{I_{1,2i-1}}$, we have $U_{i,2} = U$.
and
\[
U_{i,3}=(\omega_{I_{1,2i-1}}s_{2i})^3\,U\,(\omega_{I_{1,2i-1}}s_{2i})^{-3}=(\omega_{I_{1,2i-1}}s_{2i})\,U\,(\omega_{I_{1,2i-1}}s_{2i})^{-1}=U'.
\]
On the other hand, each $V_{i,k}$ is a conjugate of the rank-$1$ parabolic subgroup $W_{I_{2i,2i}}=\langle s_{2i}\rangle$.
Thus, for $i\ge 2$, $V_{i,k}$ cannot coincide with $U$ or $U'$, since $U$ and $U'$ are of rank $2i-1$. The case $i=1$ can be checked directly.
Hence, $\rho_{\{U,U'\}}(\tau_{V_{i,k}})=1$ for $k=0, 1, 2, 3$.
Moreover, from $\tau_{U_{i,0}}=\tau_U$, $\tau_{U_{i,1}}=\tau_{U'}$, $\tau_{U_{i,2}}=\tau_U$, and $\tau_{U_{i,3}}=\tau_{U'}$,
we obtain 
\[
\rho_{\{U,U'\}}\bigl(\Phi(a_i)\bigr)
=\tau_U\tau_{U'}\tau_U\tau_{U'}
=(\tau_U\tau_{U'})^2,
\]
which has infinite order in $\langle \tau_U, \tau_{U'} \rangle\cong \mathbb{Z}_2*\mathbb{Z}_2$ since $\tau_U$ and $\tau_{U'}$ do not commute.
It follows that $\Phi(a_i)$ has infinite order in $\widehat W$, and therefore $a_i$ has infinite order in $PC_W$.

\medskip

(iii) {\bf Type $D_n$ for $n \ge 4$}:
If $n=2m+1$, then the same argument as in type $A_{n-1}$ shows that $a_i$ has infinite order in $PC_W$. If $n=2m$, the same argument applies to $a_1, \dots, a_{m-1}$. Thus, it remains only to show that $\Phi(a_m)$ has infinite order in $\widehat{W}$ for the case $n=2m$.

We first claim that $\tau_P$ and $\tau_Q$ do not commute in $\widehat{W}$.
If so, once we verify that $\rho_{\{P,Q\}}:\widehat{W} \rightarrow \langle\tau_P,\tau_Q\rangle\cong \mathbb{Z}_2*\mathbb{Z}_2$ sends $\Phi(a_i)$ to $(\tau_P\tau_Q)^2$, then we can conclude that $\Phi(a_i)$ has infinite order since $(\tau_P\tau_Q)^2$ has infinite order.

Since $\omega_A=[n,n-1,\ldots,1]$, we obtain $\omega_A s_j \omega_A^{-1}=s_{n-j}$ for $j=2, \dots, n-2$.
Hence, $W_{I_{2,n-2}}=\omega_A W_{I_{2,n-2}} \omega_A^{-1}$, and
this implies $W_{I_{2,n-2}}\le\omega_A W_B\omega_A^{-1}=Q$.
Moreover, clearly, $W_{I_{2,n-2}}\le W_A=P$. 
Therefore, we have $P\cap Q \neq \{1\}$. Thus, $P$ and $Q$ are not orthogonal.

Now we show that $P$ and $Q$ are not nested.
Note that both $P$ and $Q$ are parabolic subgroups of rank $n-1$.
Hence, if $P \le Q$ or $Q \le P$, then $P=Q$.
However, if $P=Q$, then $W_B=\omega_A^{-1} P \omega_A=P=W_A$,
contradicting $A\neq B$ by \cite[Proposition~2.4.1(v)]{Bjorner}.
Therefore, $P$ and $Q$ are neither orthogonal nor nested. 
Thus, $[\tau_P,\tau_Q]\neq 1$ and $\langle \tau_P, \tau_Q \rangle \cong \mathbb Z_2 * \mathbb Z_2$.

By Lemma~\ref{lem:An-PQR}(iii), we have $\Phi(a_m)=(\tau_P\tau_Q\tau_R)^2$.
Let $\rho_{\{P, Q\}} : \widehat{W} \rightarrow \langle\tau_P,\tau_Q\rangle$ be the homomorphism defined in Remark~\ref{rmk:rho_A}. Then $\rho_{\{P, Q\}}(\Phi(a_m))=(\tau_P\tau_Q)^2$ if $R \neq P$ and $R \neq Q$.
Hence, next, we claim that $R\neq P$ and $R\neq Q$. 
Since $s_n\in R$ but $s_n\notin P$, it follows that $R\neq P=W_A$.
For $n \ge 6$, we show that $R \neq Q$ by comparing their orders.
Since $Q=\omega_A W_B \omega_A^{-1}$, $Q$ is conjugate to $W_B$, while $R \cong W(A_{n-1})$.
Moreover, since $W_B \cong W(D_{n-1})$ is of order $2^{n-2}(n-1)!$ and  of $W(A_{n-1})$ is of order $n!$, if $n \ge 6$, then $R \neq Q$.
It remains to consider the case $n=4$.
Since $s_4 \in R$, it is enough to show that $s_4 \notin Q$.
Suppose that $s_4 \in Q$. Then $\omega_A^{-1}s_4\omega_A \in W_B$.
By~(\ref{equ:omega_A}), if $n=4$, then $\omega_A=[4,3,2,1]$, and a direct computation using SageMath gives that $\omega_A=s_1s_2s_3s_1s_2s_1$,
and using SageMath, we obtain that one reduced expression for $\omega_A^{-1}s_4\omega_A$ is
\[
s_2 s_3 s_1 s_2 s_4 s_2 s_3 s_1 s_2.
\]
Here and below, a {\em reduced expression} means that a word in $S$ of minimal length.
Then by~\cite[Corollary~1.4.8(ii)]{Bjorner}, every reduced expression for $\omega_A^{-1}s_4\omega_A$ contains the letter $s_1$.
However, every element of the standard parabolic subgroup
$W_B=\langle s_2,s_3,s_4\rangle$ admits a reduced expression using only the generators $s_2,s_3,s_4$.
This shows that $s_4 \notin Q$. Since $s_4 \in R$, we conclude that $R \neq Q$.
 
In $\langle \tau_P, \tau_Q \rangle \cong \mathbb Z_2*\mathbb Z_2$, the element $\tau_P\tau_Q$ has infinite order. Hence, $(\tau_P\tau_Q)^2$ has infinite order.
Therefore, $\Phi(a_m)$ has infinite order in $\widehat{W}$, and thus, $a_m$ has infinite order in $PC_W$.

\medskip
(iv) {\bf Types $E_6$, $E_7$ and $E_8$}: For $a_1$ and $a_2$, the same argument in the proof of Lemma~\ref{lem:An-infinite-order}(i) shows that they have infinite order.

For $a_3$, let $P_{3,k}:=u_{k-1}W_{I_k}u_{k-1}^{-1}$ for $1 \le k \le 8$.
Then Lemma~\ref{lem:An-PQR} gives that
$\Phi(a_3)=\tau_{P_{3,1}}\tau_{P_{3,2}}\cdots\tau_{P_{3,8}}$.
Since $I_2=I_{1,5}$ and $u_1=\omega_{I_1}=s_6$, we obtain $P_{3,2}=s_6W_{1,5}s_6^{-1}$. Set 
\[
U:=W_{I_{1,5}},\qquad V:=s_6 U s_6^{-1}.
\]
We use the homomorphism $\rho_{\{U,V\}} : \widehat{W} \rightarrow \langle \tau_U, \tau_V \rangle$ defined in Remark~\ref{rmk:rho_A}.

First, we show that $\tau_U$ and $\tau_V$ do not commute in $\widehat{W}$.
Once we verify that $\rho_{\{U,V\}}(\Phi(a_3))=(\tau_V\tau_U)^2$, then it follows that $\Phi(a_3)$ has infinite order.
Since $U$ and $V$ are conjugate, they have the same order.
Hence, they are nested if and only if $U=V$.
Since $s_5 \in U$ and $s_6$ normalize $U$, it follows that $s_6s_5s_6 \in U$. However, 
\[
\ell\bigl((s_6s_5s_6)s_6\bigr)=\ell(s_6s_5)=2<3=\ell(s_6s_5s_6),
\]
and this implies that $s_6 \in U=W_{I_{1,5}}$ by Lemma~\ref{lem:parabolic-right-descent}, which is a contradiction. Hence, $U\neq V$, that is, $U,V$ are not nested.
By the Coxeter diagrams of types $E_6$, $E_7$ and $E_8$, $s_6$ commute with $s_1, s_2, s_3$ and $s_4$. Hence $W_{I_{1,4}} \le U \cap V$, and therefore, $U \cap V \neq \{1\}$, and thus, $U$ and $V$ are neither nested nor orthogonal.

We now show that $\rho_{\{U,V\}}(\Phi(a_3))=(\tau_V\tau_U)^2$.
Since $P_{3,1}=u_0W_{I_1}u_0^{-1}=W_{I_{6,6}}=\langle s_6 \rangle$ and
$P_{3,5}=u_4W_{I_5}u_4^{-1}=u_4W_{I_{1,1}}u_4^{-1}=u_4\langle s_1 \rangle u_4^{-1}$, they have rank $1$, while $U$ and $V$ have rank~$5$. Hence, they cannot coincide with $U$ or $V$.
As mentioned above, $P_{3,2}=V$. 
Next, since $P_{3,3}$ and $P_{3,7}$ are conjugates of $W_J$, and since $W_J$ is isomorphic to $W(A_5)$, $P_{3,3}$ and $P_{3,7}$ are both isomorphic to $W(A_5)$. However, since $U$ and $V$ are isomorphic to $W(D_5)$, they cannot coincide with $U$ or $V$.
Next, using SageMath, one can check the following equalities in the Coxeter group $W_{I_{1,6}}$ of type $E_6$:
\[
\begin{aligned}
&u_3 s_2 u_3^{-1}=s_4,\quad
u_3 s_3 u_3^{-1}=s_3,\quad
u_3 s_4 u_3^{-1}=s_5,\quad
u_3 s_5 u_3^{-1}=s_2,\quad
u_3 s_6 u_3^{-1}=s_1,\\
&u_5 s_2 u_5^{-1}=s_6s_5s_6,\quad
u_5 s_3 u_5^{-1}=s_3,\quad
u_5 s_4 u_5^{-1}=s_4,\quad
u_5 s_5 u_5^{-1}=s_2,\quad
u_5 s_6 u_5^{-1}=s_1.
\end{aligned}
\]
Therefore,
\[
P_{3,4}=u_3 W_{I_{2,6}}u_3^{-1}=\langle s_1,s_2,s_3,s_4,s_5\rangle=W_{I_{1,5}}=U, \quad \text{and}
\]
\[
P_{3,6}=u_5 W_{I_{2,6}}u_5^{-1}
=\langle s_1,s_2,s_3,s_4,s_6s_5s_6\rangle=s_6\langle s_1,s_2,s_3,s_4,s_5\rangle s_6=s_6Us_6=V.
\]
Finally, since $a_3\in PC_W$ by Lemma~\ref{lem:An-ai-in-PCW}(iv),
we have $u_8=u_7\omega_{I_8}=\omega_{I_1}\cdots\omega_{I_8}=g_W(a_3)=1$.
Hence, $u_7=\omega_{I_8}^{-1}=\omega_{I_{1,5}}^{-1}=\omega_{I_{1,5}} \in U$.
Therefore,
\[
P_{3,8}=u_7W_{I_{1,5}}u_7^{-1}=\omega_{I_{1,5}}U\omega_{I_{1,5}}^{-1}=U.
\]
Thus,
\[
\rho_{\{U,V\}}
\bigl(\Phi(a_3)\bigr)=\tau_{P_{3,2}}\tau_{P_{3,4}}\tau_{P_{3,6}}\tau_{P_{3,8}}
=\tau_V\tau_U\tau_V\tau_U
=(\tau_V\tau_U)^2,
\]
which has infinite order in $\langle \tau_U, \tau_V \rangle$ since $\tau_U$ and $\tau_V$ do not commute.
Therefore, $\Phi(a_3)$ has infinite order in $\widehat W$, and so does $a_3$ in $PC_W$.

For $a_4$, recall from Lemma~\ref{lem:An-PQR}(iv) that
$\Phi(a_4)=\prod_{k=0}^{5}\tau_{U_k}\tau_{V_k}$ where 
\[
U_k=(\omega_{I_{1,7}}s_8)^k W_{I_{1,7}}(\omega_{I_{1,7}}s_8)^{-k} \quad \text{and} \quad V_k=(\omega_{I_{1,7}}s_8)^k \omega_{I_{1,7}}W_{I_{8,8}}\omega_{I_{1,7}}^{-1}(\omega_{I_{1,7}}s_8)^{-k}.
\]
First we show that $\tau_{U_0}$ and $\tau_{V_0}$ do not commute.
Note that $U_0=W_{I_{1,7}}=\langle s_1,\dots,s_7\rangle$ and $V_0=\omega_{I_{1,7}}\langle s_8\rangle \omega_{I_{1,7}}^{-1}$ are not nested since $s_8 \notin W_{I_{1,7}}$.
Moreover, they are not orthogonal since $s_8$ does not centralize $W_{I_{1,7}}$ (it fails to commute with $s_7$).
Hence, $\tau_{U_0}$ and $\tau_{V_0}$ do not commute.

Let $\rho_{\{U_0, V_0\}}:\widehat W\to \langle\tau_{U_0},\tau_{V_0}\rangle$ be the homomorphism defined in Remark~\ref{rmk:rho_A}.
We show that $\Phi(a_4)$ maps to $(\tau_{U_0}\tau_{V_0})^2$ under the retraction keeping only $\tau_{U_0}$ and $\tau_{V_0}$.
To see this, we first claim that $U_k \neq U_0$ and $V_k \neq V_0$ for $k \in \{1,2,4,5\}$.
Indeed, if $U_k=U_0$ for $k\in \{1,2,4,5\}$, then $(\omega_{I_{1,7}}s_8)^k$ normalizes $W_{I_{1,7}}$.
Hence, we have $(\omega_{I_{1,7}}s_8)^k s_7 (\omega_{I_{1,7}}s_8)^{-k} \in W_{I_{1,7}}$ since $s_7 \in W_{I_{1,7}}$.
However, a direct computation using SageMath shows that a reduced word for $(\omega_{I_{1,7}}s_8)^k s_7 (\omega_{I_{1,7}}s_8)^{-k}$ contains the letter $s_8$ for every $k\in\{1,2,4,5\}$.
By~\cite[Corollary~1.4.8(ii)]{Bjorner}, any two reduced expressions of a
given Coxeter group element contain the same set of generators. 
Hence, once $s_8$ occurs in one reduced expression of $(\omega_{I_{1,7}}s_8)^k s_7 (\omega_{I_{1,7}}s_8)^{-k}$, it occurs in every reduced expression. Consequently, 
\[
(\omega_{I_{1,7}}s_8)^k s_7 (\omega_{I_{1,7}}s_8)^{-k}\notin W_{I_{1,7}}
\]
and this contradiction proves $U_k \neq U_0$ for $k \in \{1, 2, 4, 5 \}$.


The argument for $V_k \neq V_0$ is analogous. Suppose to the contrary that $V_k=V_0$. Then $(\omega_{I_{1,7}}s_8)^k$ normalizes $\omega_{I_{1,7}} \langle s_8\rangle \omega_{I_{1,7}}^{-1}$.
However, a direct computation using SageMath in the Coxeter group of type $E_8$ rules this out. Therefore, $V_k \neq V_0$.

Next, we show that $U_3=U_0$ and $V_3=V_0$.
By the proof of Lemma~\ref{lem:An-ai-in-PCW}(iv), $\omega_{I_{1,7}}s_8$ has order $6$.
Hence, the subgroup $\langle \omega_{I_{1,7}}, s_8 \rangle$ is the dihedral group of order $12$.
In particular, $(\omega_{I_{1,7}}s_8)^3$ is central in $\langle \omega_{I_{1,7}}, s_8\rangle$.
This implies that $(\omega_{I_{1,7}}s_8)^3$ commutes with $\omega_{I_{1,7}}$ and with $s_8$.
Moreover, using SageMath, a direct computation in the Coxeter group of type $E_8$ shows that $(\omega_{I_{1,7}}s_8)^3$ normalizes $W_{I_{1,7}}$.
Therefore, we have $U_3=(\omega_{I_{1,7}}s_8)^3 W_{I_{1,7}} (\omega_{I_{1,7}}s_8)^{-3}=W_{I_{1,7}}=U_0$.
For $V_3$, using the centrality of $(\omega_{I_{1,7}}s_8)^3$ in $\langle \omega_{I_{1,7}},s_8\rangle$, we get
\[
\begin{aligned}
V_3&=(\omega_{I_{1,7}}s_8)^3\bigl(\omega_{I_{1,7}}\langle s_8\rangle \omega_{I_{1,7}}^{-1}\bigr)(\omega_{I_{1,7}}s_8)^{-3} \\
&=\omega_{I_{1,7}}\,\bigl((\omega_{I_{1,7}}s_8)^3\langle s_8\rangle (\omega_{I_{1,7}}s_8)^{-3}\bigr)\,\omega_{I_{1,7}}^{-1} \\
&=\omega_{I_{1,7}}\langle s_8\rangle \omega_{I_{1,7}}^{-1} 
=V_0.
\end{aligned}
\]

Thus, $\rho(\tau_{U_k})=\rho(\tau_{V_k})=1$ for $k\in \{1,2,4,5\}$, while $\rho(\tau_{U_0})=\rho(\tau_{U_3})=\tau_{U_0}$ and $\rho(\tau_{V_0})=\rho(\tau_{V_3})=\tau_{V_0}$.
This implies that
\[
\rho\bigl(\Phi(a_4)\bigr)=(\tau_{U_0}\tau_{V_0})^2,
\]
which has infinite order in $\langle \tau_{U_0}, \tau_{V_0} \rangle \cong \mathbb{Z}^2 * \mathbb{Z}^2$ since $\tau_{U_0},\tau_{V_0}$ do not commute.
Therefore, $\Phi(a_4)$ has infinite order in $\widehat W$, and so does $a_4$ in $PC_W$.

\medskip
(v) {\bf Type $F_4$}: For $a_1$, the same argument in the proof of Lemma~\ref{lem:An-infinite-order}(ii) shows that $a_1$ has infinite order.

For $a_2$, let $U=U^{(2)}_0=W_{I_{1,3}}$ and $V=V^{(2)}_0=\omega_{I_{1,3}}\langle s_4\rangle\omega_{I_{1,3}}^{-1}$.
Then $\tau_U$ and $\tau_V$ do not commute in $\widehat W$ for the following reason.
Since $V$ is a conjugate of the subgroup $\langle s_4 \rangle$ of rank $1$, while $U$ has rank $3$, it follows that $U \not\subset V$.
Moreover, $V \not\subset U$. Otherwise, we have $s_4 \in U$, which is a contradiction. Hence $U$ and $V$ are not nested.
Since $s_4$ does not commute to $s_3 \in I_{1,3}$, $\omega_{I_{1,3}}s_4\omega_{I_{1,3}}^{-1}$ does not commute with $\omega_{I_{1,3}}s_3\omega_{I_{1,3}}^{-1}=s_3\in U$.
Therefore, $U$ and $V$ are not orthogonal.

It is therefore enough to show that $\rho_{{U,V}}:\widehat W\to \langle \tau_U,\tau_V\rangle$ defined in Remark~\ref{rmk:rho_A} sends $\Phi(a_2)$ to $(\tau_U\tau_V)^2$.
For this purpose, we compare the sets $U_k^{(2)}$ and $V_k^{(2)}$ for all $k$.

Using SageMath, one can check that $(\omega_{I_{1,3}}s_4)^3$ is the longest element of the Coxeter group of type $F_4$. 
Then, by Lemma~\ref{lem:central-longest-element}, $(\omega_{I_{1,3}}s_4)^3$ is central.
Thus, we have $V^{(2)}_3=V$ and $U^{(2)}_3=U$.

For the remaining indices $k\in\{1,2,4,5\}$, we show that $U^{(2)}_k\neq U$ and $V^{(2)}_k\neq V$.
If $U^{(2)}_k=U$, then $(\omega_{I_{1,3}}s_4)^k$ normalizes $U=W_{I_{1,3}}$. Hence, $(\omega_{I_{1,3}}s_4)^k s_3 (\omega_{I_{1,3}}s_4)^{-k} \in U$. For $k=1, 2$, using SageMath, one can check directly that a reduced expression of $(\omega_{I_{1,3}}s_4)^k s_3 (\omega_{I_{1,3}}s_4)^{-k}$ contains the letter $s_4$.
Therefore, $(\omega_{I_{1,3}}s_4)^k s_3 (\omega_{I_{1,3}}s_4)^{-k} \notin U$, a contradiction.
Note that by the proof of Lemma~\ref{lem:An-ai-in-PCW}(v), $\omega_{I_{1,3}}s_4$ has order $6$ in the Coxeter group of type $F_4$.
For $k=4, 5$, it follows that $(\omega_{I_{1,3}}s_4)^{-k}=(\omega_{I_{1,3}}s_4)^{6-k}$, and if $U^{(2)}_{k}=U$, then $U^{(2)}_{6-k}=U$.
Thus, $U^{(2)}_k\neq U$.

To see the equality among $V_k^{(2)}$'s, since each $V_k^{(2)}$ is a parabolic subgroup of rank 1, it is enough to check whether the generators are the same. Using Sagemath, one can check that the element $(\omega_{I_{1,3}}s_4)^k$ centralizes $\omega_{I_{1,3}} s_4\omega_{I_{1,3}}^{-1}$ generating $V$ if and only if $k\equiv 0$ or $3\pmod 6$.
Consequently, $V^{(2)}_k\neq V$ for $k\in\{1,2,4,5\}$.

Let $\rho_{\{U, V\}}:\widehat W\to \langle\tau_U,\tau_V\rangle$ be the homomorphism
defined in Remark~\ref{rmk:rho_A}.
Then we have
\[
\rho_{\{U, V\}}\bigl(\Phi(a_2)\bigr)
=\rho_{\{U, V\}}(\tau_{U^{(2)}_0}\tau_{V^{(2)}_0}\tau_{U^{(2)}_3}\tau_{V^{(2)}_3})
=\tau_U\tau_V\tau_U\tau_V
=(\tau_U\tau_V)^2.
\]
Since $\tau_U$ and $\tau_V$ do not commute, $(\tau_U\tau_V)^2$ has infinite order in $\langle\tau_U,\tau_V\rangle$, and therefore $\Phi(a_2)$ has infinite order in $\widehat W$.

\medskip
(vi) {\bf Types $H_3$ and $H_4$}: 
To show that $a_1$ and $a_2$ have infinite order in $PC_W$, we show that $\Phi(a_1)$ and $\Phi(a_2)$ have infinite order in $\widehat{W}$ in type $H_4$.

For $a_1$, we claim that $U_0^{(1)}$ and $V_0^{(1)}$ are neither orthogonal nor nested, and the map $\rho_{\{U_0^{(1)}, V_0^{(1)}\}}:\widehat W \rightarrow \langle\tau_U,\tau_V\rangle \cong \mathbb{Z}_2*\mathbb{Z}_2$ defined in Remark~\ref{rmk:rho_A} sends $\Phi(a_1)$ to $(\tau_{U_0^{(1)}} \tau_{V_0^{(1)}})^2$.
 
Note that $W_{I_{1,2}}$ is the dihedral group $W(I_2(5))$ of order 10, that is, $W_{I_{1,2}}=\langle s_1,s_2\mid s_1^2=s_2^2=1,\ (s_1s_2)^5=1\rangle$.
Let $r=s_1s_2$.
Then $r$ has order $5$ and $s_1rs_1=r^{-1}$, and hence $s_1r^k=r^{-k}s_1$ for every $k\in\mathbb Z$.
In particular,
\[
r^k s_1 r^{-k} \;=\; r^k (s_1 r^{-k}) \;=\; r^k(r^k s_1)\;=\; r^{2k}s_1.
\]
Therefore, $U^{(1)}_k=\langle r^ks_1r^{-k}\rangle=\langle r^{2k}s_1\rangle$ and
\[
V^{(1)}_k=\langle r^k(s_1s_2s_1)r^{-k}\rangle
=\langle r^k(r s_1)r^{-k}\rangle
=\langle r^{2k+1}s_1\rangle,
\]
since $s_1\langle s_2\rangle s_1$ is generated by $s_1s_2s_1=r s_1$.

For $j=0,1,2,3,4$, the five reflections $r^j s_1$ are pairwise distinct in $W(I_2(5))$.
Then the parabolic subgroups $U^{(1)}_0,\dots,U^{(1)}_4$ are pairwise distinct, and similarly $V^{(1)}_0,\dots,V^{(1)}_4$ are pairwise distinct.

Comparing the exponents, we have $V^{(1)}_k=U^{(1)}_\ell$ if and only if $2k+1 \equiv 2\ell \pmod 5$, that is, $\ell \equiv k+3 \pmod 5$.
Thus, we have $V^{(1)}_2=U^{(1)}_0$ and $U^{(1)}_3=V^{(1)}_0$, and these are the only coincidences involving $U^{(1)}_0$ or $V^{(1)}_0$.

Let $\rho_{\{U^{(1)}_0, V^{(1)}_0\}}:\widehat W \rightarrow \langle\tau_{U^{(1)}_0},\tau_{V^{(1)}_0}\rangle$ be the homomorphism defined in Remark~\ref{rmk:rho_A}.
Then 
\[
\rho_{\{U^{(1)}_0, V^{(1)}_0\}}\bigl(\Phi(a_1)\bigr)
=\tau_{U^{(1)}_0}\tau_{V^{(1)}_0}\tau_{U^{(1)}_0}\tau_{V^{(1)}_0}
=(\tau_{U^{(1)}_0}\tau_{V^{(1)}_0})^2.
\]
Since $U^{(1)}_0$ and $V^{(1)}_0$ are distinct subgroups of rank $1$, they are not nested. Moreover, $U^{(1)}_0$ and $V^{(1)}_0$ are not orthogonal since $s_1(s_1s_2s_1)=s_2s_1 \neq s_1s_2=(s_1s_2s_1)s_1$, where the inequality follows from $m(s_1, s_2)=5$.
Thus, $\tau_{U^{(1)}_0}$ and $\tau_{V^{(1)}_0}$ do not commute. Therefore, $\rho_{\{U^{(1)}_0, V^{(1)}_0\}}\bigl(\Phi(a_1)\bigr)$ has infinite order in $\langle\tau_{U^{(1)}_0},\tau_{V^{(1)}_0}\rangle \cong \mathbb{Z}_2 * \mathbb{Z}_2$, and hence $\Phi(a_1)$ has infinite order.

Next, for $a_2$, let $U=U^{(2)}_0=W_{I_{1,3}}$ and $V=V^{(2)}_0=\omega_{I_{1,3}}\langle s_4\rangle\omega_{I_{1,3}}^{-1}$.
It suffices to show that $U$ and $V$ are neither orthogonal nor nested, and the map $\rho_{\{U, V\}}:\widehat W \rightarrow \langle\tau_U,\tau_V\rangle\cong\mathbb{Z}_2*\mathbb{Z}_2$ defined in Remark~\ref{rmk:rho_A} sends $\Phi(a_2)$ to $(\tau_U \tau_V)^2$.
Using SageMath again, one can check that $(\omega_{I_{1,3}}s_4)^5$ is the longest element of the Coxeter group of type $H_4$.
Then, by Lemma~\ref{lem:central-longest-element}, $(\omega_{I_{1,3}}s_4)^5$ is central.
Thus, we have $U^{(2)}_5=U$ and $V^{(2)}_5=V$.
Using an argument similar to the proof of Lemma~\ref{lem:An-infinite-order}(v), we obtain $U^{(2)}_k\neq U$ and $V^{(2)}_k \neq V$ for $k\in\{1,2,3,4,6,7,8,9\}$.

Moreover, $U$ and $V$ are not nested since $U$ has rank $3$, whereas $V$ has rank $1$ and $s_4 \notin W_{I_{1,3}}$.
Since $s_4$ does not commute with $s_3\in I_{1,3}$, it followst that $\omega_{I_{1,3}}s_4\omega_{I_{1,3}}^{-1}$ does not commute with $\omega_{I_{1,3}}s_3\omega_{I_{1,3}}^{-1} \in U$.
Thus, $U$ and $V$ are not orthogonal.
Therefore, $\tau_U$ and $\tau_V$ do not commute.

Let $\rho_{\{U, V\}}:\widehat W \rightarrow \langle\tau_U,\tau_V\rangle$ be the homomorphism defined in Remark~\ref{rmk:rho_A}.
Then 
\[
\rho_{\{U, V\}}\bigl(\Phi(a_2)\bigr)=\tau_U\tau_V\tau_U\tau_V=(\tau_U\tau_V)^2,
\]
which has infinite order in $\langle\tau_U,\tau_V\rangle$ since $\tau_U$ and $\tau_V$ do not commute.
Therefore, $\Phi(a_2)$ has infinite order in $\widehat W$.

\medskip

(vii) {\bf Type $I_2(p)$ for $p \ge 5$}:
We show that $\Phi(a_1)$ has infinite order in $\widehat{W}$.
To do this, we claim that $U_0$ and $V_0$ are neither orthogonal nor nested, and the map $\rho_{\{U_0, V_0\}}:\widehat W \rightarrow \langle\tau_U,\tau_V\rangle \cong \mathbb{Z}_2*\mathbb{Z}_2$ defined in Remark~\ref{rmk:rho_A} sends $\Phi(a_1)$ to $(\tau_{U_0} \tau_{V_0})^2$.

Note that $W(I_2(p))$ is the dihedral group of order $2p$, i.e. $W(I_2(p))=\langle s_1,s_2\mid s_1^2=s_2^2=1,\ (s_1s_2)^{p}=1\rangle$.
Let $r=s_1s_2$.
Then $r$ has order $p$ and $s_1rs_1=r^{-1}$, hence $s_1r^k=r^{-k}s_1$ for every $k\in\mathbb Z$.
In particular,
\[
r^k s_1 r^{-k} \;=\; r^k (s_1 r^{-k}) \;=\; r^k(r^k s_1)\;=\; r^{2k}s_1.
\]
Thus, $U_k=\langle r^ks_1r^{-k}\rangle=\langle r^{2k}s_1\rangle$. Moreover, $s_1\langle s_2\rangle s_1$ is generated by $s_1s_2s_1=r s_1$, so
\[
V_k=\langle r^k(s_1s_2s_1)r^{-k}\rangle
=\langle r^k(r s_1)r^{-k}\rangle
=\langle r^{2k+1}s_1\rangle.
\]
Since $r$ has order $p$, the reflections $r^js_1$ for $0\leq j<p$ are pairwise distinct.
If $p$ is odd, the subgroups $U_0,\ldots,U_{p-1}$ are pairwise distinct, and so are $V_0,\ldots,V_{p-1}$.
If $p$ is even, then
\[
U_{k+p/2}=U_k,\qquad
V_{k+p/2}=V_k
\]
for $0\leq k<p/2$, and each family contains exactly $p/2$ distinct subgroups.

Now, let $U=U_0=\langle s_1\rangle$ and $V=V_0=s_1\langle s_2\rangle s_1=\langle rs_1\rangle$.
Then $U$ and $V$ are not nested since $U \neq V$, and they are not orthogonal by an argument similar to that in Lemma~\ref{lem:An-infinite-order}(vi). Hence the two generators $\tau_U$ and $\tau_V$ do not commute.

Let $\rho_{\{U, V\}}:\widehat W \rightarrow \langle\tau_U,\tau_V\rangle$ be the homomorphism defined in Remark~\ref{rmk:rho_A}.
To compute $\rho_{\{U,V\}}(\Phi(a_1))$, we determine exactly when $U_k$ or $V_k$ equals $U$ or $V$.
From $U_k=\langle r^{2k}s_1\rangle$ and $V_k=\langle r^{2k+1}s_1\rangle$, we have
\[
\begin{aligned}
U_k&=U \Longleftrightarrow 2k\equiv 0\pmod p, &
V_k&=V \Longleftrightarrow 2k+1\equiv 1\pmod p, & \\
V_k&=U \Longleftrightarrow 2k+1\equiv 0\pmod p, &
U_k&=V \Longleftrightarrow 2k\equiv 1\pmod p.
\end{aligned}
\]
If $p$ is even, then $2k\equiv 0\pmod p$ holds exactly for $k=0$ and $k=p/2$, while the congruences $2k\equiv 1$ and $2k+1\equiv 0$ have no solutions.
Hence, the only surviving blocks under $\rho$ are those with $k=0$ and $k=p/2$, and
\[
\rho_{\{U, V\}}(\Phi(a_1))=\tau_U\tau_V\,\tau_U\tau_V=(\tau_U\tau_V)^2.
\]
If $p$ is odd, then $2$ is invertible modulo $p$.
Hence, $2k\equiv 0\pmod p$ forces $k=0$, while the congruences $2k+1\equiv 0$ and $2k\equiv 1$ have the unique solutions $k=(p-1)/2$ and $k=(p+1)/2$, respectively.
Therefore, $\rho(\Phi(a_1))$ receives the additional surviving factors $\tau_{V_{(p-1)/2}}=\tau_U$ and $\tau_{U_{(p+1)/2}}=\tau_V$, and again
\[
\rho_{\{U, V\}}(\Phi(a_1))=\tau_U\tau_V\tau_U\tau_V=(\tau_U\tau_V)^2.
\]
Since $\tau_U$ and $\tau_V$ do not commute, $(\tau_U\tau_V)^2$ has infinite order in $\langle\tau_U,\tau_V\rangle$.
Thus, $\Phi(a_1)$ has infinite order in $\widehat W$.
Therefore, $a_1$ has infinite order in $PC_W$.
\end{proof}

Now we are ready to show that $\langle a_1,\ldots,a_m\rangle$ is a free abelian subgroup in $PC_W$.

\begin{proposition}\label{thm:An-free-abelian}
The subgroup $\langle a_1,\ldots,a_m\rangle\leq PC_W$ is free abelian of rank $m$.
\end{proposition}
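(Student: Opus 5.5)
By Lemma~\ref{lem:An-commutativity}, the elements $a_1,\ldots,a_m$ commute pairwise, so $\langle a_1,\ldots,a_m\rangle$ is abelian. Each $a_i$ has infinite order by Lemma~\ref{lem:An-infinite-order}. It therefore remains to prove independence: if $a_1^{k_1}\cdots a_m^{k_m}=1$ with $k_j\in\mathbb Z$, then every $k_j=0$. Since $\Phi$ is injective, it suffices to show that $\Phi(a_1)^{k_1}\cdots\Phi(a_m)^{k_m}\neq 1$ in $\widehat W$ whenever some $k_j\neq 0$. By Remark~\ref{rmk:irreducible} we may assume $(W,S)$ is irreducible.

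The plan reuses, for each $j$, the pair of parabolic subgroups $\{X_j,Y_j\}$ that Lemma~\ref{lem:An-infinite-order} used to detect $a_j$. These are $\{P_j,Q_j\}$ in the $A_n$ and $D_n$ cases, $\{U,U'\}$ in type $B_n$, $\{U,V\}$ in the exceptional cases, and so on. In each case $\rho_{\{X_j,Y_j\}}(\Phi(a_j))=(\tau_{X_j}\tau_{Y_j})^2$, and $\tau_{X_j},\tau_{Y_j}$ do not commute. Apply the retraction $\rho_j:=\rho_{\{X_j,Y_j\}}$ of Remark~\ref{rmk:rho_A} to the product. The key claim is that $\rho_j(\Phi(a_i))=1$ for $i\neq j$. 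Granting this, $\rho_j$ sends the product to $(\tau_{X_j}\tau_{Y_j})^{2k_j}$. This element is nontrivial in $\mathbb Z_2*\mathbb Z_2$ when $k_j\neq0$, which proves independence.

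To prove the claim, we must show that none of the parabolic subgroups $u_{k-1}W_{I_k}u_{k-1}^{-1}$ appearing in the formula of Lemma~\ref{lem:An-PQR} for $\Phi(a_i)$ equals $X_j$ or $Y_j$ when $i\neq j$. The tools are invariants already used in the paper.
\begin{enumerate}[\rm (a)]
\item Rank of a parabolic subgroup. The subgroups occurring in $\Phi(a_i)$ come from subsets of size at most $2i$, with rank profile specific to $i$ (for instance $1$, $2i-1$, $2i-1$ in type $A$).
\item Isomorphism type or order.
\item Support. The subgroups occurring in $\Phi(a_i)$ are conjugates by elements of $W_{I_{1,2i}}$ of subgroups of $W_{I_{1,2i}}$, so they lie inside $W_{I_{1,2i}}$. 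If $i<j$, a subgroup of $W_{I_{1,2i}}$ cannot equal $Y_j$, because $Y_j$ contains an element whose reduced expressions involve $s_{2j}$ (Lemma~\ref{lem:parabolic-right-descent} and \cite[Corollary~1.4.8(ii)]{Bjorner}). It also cannot equal $X_j$ when $X_j$ has larger rank.
\end{enumerate}
When ranks coincide, say a rank-$1$ subgroup $P_j=\langle s_{2j}\rangle$ against a rank-$1$ subgroup of $\Phi(a_i)$, we compare supports directly. For $i<j$, $s_{2j}\notin W_{I_{1,2i}}$. For $i>j$, we compare the explicit reflections in the $S_{n+1}$ or signed-permutation model, e.g.\ transpositions $(2i\ 2i+1)$, $(1\ 2i+1)$ versus $(2j\ 2j+1)$.

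If some coincidence cannot be excluded, the fallback is a triangular argument. Order the detecting pairs so that $\rho_j(\Phi(a_i))=1$ for all $i<j$ (or all $i>j$), and take $j$ to be the largest (respectively smallest) index with $k_j\neq0$. This only requires the vanishing in one direction, which is what the support argument (c) gives most easily. In the exceptional types ($a_4$ in $E_8$, $a_2$ in $F_4,H_4$), the other $a_i$ live in $W_{I_{1,6}}$ or $W_{I_{1,2}}$. Their parabolic subgroups therefore have rank at most $5$ and support avoiding $s_7,s_8$, so they differ from $U_0=W_{I_{1,7}}$ and from $V_0\ni\omega_{I_{1,7}}s_8\omega_{I_{1,7}}$. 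Conversely, rank and order comparisons separate the pairs for $a_1,a_2,a_3$.

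I expect the main obstacle to be this case-by-case verification of non-coincidence of parabolic subgroups across different $a_i$. The hardest instances are type $D_{2m}$, where $P=W_A$, $Q$ and $R$ have large rank and must be distinguished from the type-$A$ subgroups of lower indices (rank suffices here), and the exceptional types, where $a_3$ in $E_n$ involves conjugates of $W_J$. Where structural invariants do not settle a case, I would resolve it using explicit permutation models or the SageMath reduced-word computations, as elsewhere in the paper.
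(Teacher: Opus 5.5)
Your proposal is correct and is essentially the paper's argument. The paper applies $\Phi$ and retracts onto a non-commuting pair of generators of $\widehat W$ that detects one $a_j$ (in type $A_n$ it uses the triple $\{P_i,Q_i,R_i\}$). It then kills the other $\Phi(a_i)$ by comparing ranks and supports of parabolic subgroups, peeling off indices from the top in types $B_n$, $D_{2m}$, $E$, $F_4$, $H_4$, exactly as in your triangular fallback.

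One correction: in $E_8$ your claim that rank and order also give the reverse vanishing $\rho_{\{P_j,Q_j\}}(\Phi(a_4))=1$ for $j=1,2$ fails. The detecting subgroups $\langle s_2\rangle$, $\langle s_2s_1s_2\rangle$, $\langle s_4\rangle$ have rank $1$, and so do the $V_k$ in $\Phi(a_4)$. There the top-down version is the one you actually need.
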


\begin{proof}
By Lemmas~\ref{lem:An-commutativity}-\ref{lem:An-infinite-order}, all $a_i$ commute pairwise, lie in $PC_W$, and have infinite order.
Therefore, it is enough to show the independence.

\medskip
(i) {\bf Type $A_n$ for $n \ge 1$}: For each $1\leq i\leq m$, let $K_i=\langle\tau_{P_i},\tau_{Q_i},\tau_{R_i}\rangle \leq\widehat W$.
For $i\neq j$, we have $\{P_i,Q_i,R_i\}\cap\{P_j,Q_j,R_j\}=\emptyset$. 
Consider the retraction $\rho_i:=\rho_{\{P_i,Q_i,R_i\}}: \widehat W\longrightarrow K_i$ defined in Remark~\ref{rmk:rho_A}.
Then $\rho_i(\Phi(a_i))=\Phi(a_i)$ and $\rho_i(\Phi(a_j))=1$ for $j \neq i$.

Suppose that $\prod_{i=1}^m a_i^{n_i}=1$ in $PC_W$.
Applying $\rho_i\circ\Phi$ gives $\Phi(a_i)^{n_i}=1$.
Since $\Phi(a_i)$ has infinite order by Lemma 4.6(i), we obtain $n_i=0$ for every $i$.
Thus, $a_1,\ldots,a_m$ are independent.
Together with their pairwise commutativity, this proves that $\langle a_1,\ldots,a_m\rangle \cong\mathbb Z^m$.

\medskip

(ii) {\bf Type $B_n$ for $n \ge 2$}:
Assume that $\prod_{i=1}^m a_i^{n_i}=1$ in $PC_W$ for some integers $n_i$.  Applying the injective homomorphism $\Phi$ gives $\prod_{i=1}^m \Phi(a_i)^{n_i}=1$ in $\widehat W$.
We show successively, starting from the largest index, that all $n_i$ vanish.
Consider first
\[
\rho_{\{U_{m,0},U_{m,1}\}}:
\widehat W\longrightarrow
\langle\tau_{U_{m,0}},\tau_{U_{m,1}}\rangle
\cong\mathbb Z_2*\mathbb Z_2
\]
as defined in Remark~\ref{rmk:rho_A}.
By the proof of Lemma~\ref{lem:An-infinite-order}(ii), $\rho_{\{U_{m,0},U_{m,1}\}}(\Phi(a_m))
=(\tau_{U_{m,0}}\tau_{U_{m,1}})^2$,
which has infinite order.

For every $j<m$, each parabolic subgroup appearing in $\Phi(a_j)$ has rank either $2j-1$ or $1$, whereas $U_{m,0}$ and $U_{m,1}$ have rank $2m-1$.
Hence none of the parabolic subgroups appearing in $\Phi(a_j)$ is equal to $U_{m,0}$ or $U_{m,1}$, and therefore
$\rho_{\{U_{m,0},U_{m,1}\}}(\Phi(a_j))=1$ for $j < m$.
Applying $\rho_{\{U_{m,0},U_{m,1}\}}$ to the relation above gives $(\tau_{U_{m,0}}\tau_{U_{m,1}})^{2n_m}=1$.
Since $\tau_{U_{m,0}}\tau_{U_{m,1}}$ has infinite order, we obtain $n_m=0$.
Thus, $\prod_{i=1}^{m-1}\Phi(a_i)^{n_i}=1$.

Repeating the same argument with $\rho_{\{U_{m-1,0},U_{m-1,1}\}},\rho_{\{U_{m-2,0},U_{m-2,1}\}},\ldots,\rho_{\{U_{2,0},U_{2,1}\}}$ successively gives $n_m=n_{m-1}=\cdots=n_2=0$.
We are then left with $\Phi(a_1)^{n_1}=1$.
Since $\Phi(a_1)$ has infinite order by Lemma~\ref{lem:An-infinite-order}(ii), we have $n_1=0$.
Hence, $n_1=\cdots=n_m=0$.
Therefore, $\{a_1,\ldots,a_m\}$ is an independent family of commuting infinite-order elements, and consequently, $\langle a_1,\ldots,a_m\rangle\cong\mathbb Z^m$.

\medskip
(iii) {\bf Type $D_n$ for $n \ge 4$}:
We first consider the case $n=2m+1$.
The subdiagram spanned by $\{s_1,\ldots,s_{n-1}\}$ is of type $A_{n-1}$.
By Lemma~\ref{lem:An-PQR}(iii), the argument in Proposition~\ref{thm:An-free-abelian}(i) applies to $a_1,\ldots,a_m$.
Hence, these elements are independent.

It remains to consider the case $n=2m$.
Assume that $\prod_{i=1}^m a_i^{n_i}=1$ in $PC_W$.
Apply Yu's embedding $\Phi$ to get $\prod_{i=1}^m \Phi(a_i)^{n_i}=1$ in $\widehat{W}$.
Keep the parabolic subgroups $P,Q$ as in Lemma~\ref{lem:An-PQR}(iii), 
and let
\[
\rho_{\{P,Q\}}: \widehat{W}\to \langle\tau_P,\tau_Q\rangle\cong \mathbb Z_2*\mathbb Z_2
\]
be the homomorphism defined in Remark~\ref{rmk:rho_A}.
For $i \le m-1$, the word defining $a_i$ involves only generators $\sigma_{I}$ with $I\subseteq I_{1,n-2}$. Hence, every parabolic subgroup appearing in $\Phi(a_i)$ is contained in $W_{I_{1,n-2}}$. In particular, none of the Coxeter generators $\tau_P,\tau_Q$ appears in $\Phi(a_i)$. Thus, $\rho(\Phi(a_i))=1$.

On the other hand, by Lemma~\ref{lem:An-PQR}(iii), we have $\rho_{\{P,Q\}}(\Phi(a_m))=(\tau_P\tau_Q)^2$, which has infinite order by Lemma~\ref{lem:An-infinite-order}(iii).
Thus, $\rho(\Phi(a_m))^{n_m}=1$ forces $n_m=0$.
Therefore, $\prod_{i=1}^{m-1} a_i^{n_i}=1$ in the free abelian group $\langle a_1,\dots,a_{m-1}\rangle\cong\mathbb Z^{m-1}$,
and hence $n_1=\cdots=n_{m-1}=0$. This proves that $\langle a_1,\dots,a_m\rangle\cong \mathbb Z^m$.

\medskip
(iv) {\bf Types $E_6$, $E_7$, and $E_8$}:
Assume that $\prod_{i=1}^m a_i^{n_i}=1$
in $PC_W$ for some integers $n_i$. For types $E_6$ and $E_7$, we have $m=3$, while for type $E_8$, we have $m=4$.
Applying $\Phi$, we obtain $\prod_{i=1}^m \Phi(a_i)^{n_i}=1$ in $\widehat W$. We show successively, starting from the largest index,
that all $n_i$ vanish.

Suppose first that $W$ is of type $E_8$. Keep $U_0$ and $V_0$ as in
Lemma~\ref{lem:An-PQR}(iv), and consider
\[
\rho_{\{U_0,V_0\}}:
\widehat W\longrightarrow
\langle\tau_{U_0},\tau_{V_0}\rangle
\cong \mathbb Z_2*\mathbb Z_2.
\]
By the proof of Lemma~\ref{lem:An-infinite-order}(iv), $\rho_{\{U_0,V_0\}}(\Phi(a_4))=(\tau_{U_0}\tau_{V_0})^2$,
which has infinite order.

By Lemma~\ref{lem:An-PQR}(iv), every parabolic subgroup appearing in
$\Phi(a_1)$, $\Phi(a_2)$, or $\Phi(a_3)$ is contained in
$W_{I_{1,7}}$ and has rank at most $5$. Since $U_0=W_{I_{1,7}}$ has rank $7$ and $V_0 \not\subseteq W_{I_{1,7}}$,
neither $U_0$ nor $V_0$ appears among the parabolic subgroups in
$\Phi(a_1)$, $\Phi(a_2)$, or $\Phi(a_3)$.
Therefore, for $i=1, 2, 3$, we have $\rho_{\{U_0,V_0\}}(\Phi(a_i))=1$, and this gives $(\tau_{U_0}\tau_{V_0})^{2n_4}=1$. Hence, we have $n_4=0$.

Next, we consider $a_3$.
Keep $U=W_{I_{1,5}}$ and $V=s_6Us_6^{-1}$ as in the proof of Lemma~\ref{lem:An-infinite-order}(iv).
By that proof, $\rho_{\{U,V\}}(\Phi(a_3))=(\tau_V\tau_U)^2$, which has infinite order.
Since all parabolic subgroups appearing in $\Phi(a_1)$ and $\Phi(a_2)$
have rank at most $3$, while $U$ and $V$ have rank $5$, we have $\rho_{\{U,V\}}(\Phi(a_1))=\rho_{\{U,V\}}(\Phi(a_2))=1$, and that $(\tau_V\tau_U)^{2n_3}=1$.
Therefore, we have $n_3=0$.

It remains to consider $a_1$ and $a_2$.
In types $E_6$, $E_7$, and $E_8$, the elements $a_1$ and $a_2$ have exactly the same form as in type $A_k$ ($k\geq 4$), and the standard parabolic subgroup $W_{I_{1,4}}$ has type $A_4$. By Proposition~\ref{thm:An-free-abelian}(i), we obtain $\langle a_1,a_2\rangle \cong \mathbb Z^2$, and thus, we have $n_2=n_1=0$.
Therefore, $n_1=\cdots=n_m=0$.
Consequently, $a_1,\ldots,a_m$ are independent. Together with the preceding lemmas, this proves that $\langle a_1,\ldots,a_m\rangle\cong\mathbb Z^m$.

\medskip
(v) {\bf Type $F_4$}: 
Assume that $a_1^{n_1}a_2^{n_2}=1$ in $PC_W$ for some integers $n_1$ and $n_2$. Applying $\Phi$, we obtain $\Phi(a_1)^{n_1}\Phi(a_2)^{n_2}=1$ in $\widehat{W}$.
Keep $U=W_{I_{1,3}}$ and $V=\omega_{I_{1,3}}\langle s_4\rangle
\omega_{I_{1,3}}^{-1}$
as in the proof of Lemma~\ref{lem:An-infinite-order}(v), 
and consider
\[
\rho_{\{U,V\}}:
\widehat W\longrightarrow
\langle\tau_U,\tau_V\rangle
\]
defined in Remark~\ref{rmk:rho_A}.
By the proof of Lemma~\ref{lem:An-infinite-order}(v), $\rho_{\{U,V\}}(\Phi(a_2))=(\tau_U\tau_V)^2$, which has infinite order.

Every parabolic subgroup appearing in $\Phi(a_1)$ is contained in
$W_{I_{1,2}}\le U$ and has rank $1$. Since $U$ has rank $3$ and
$V\not\subset U$, neither $U$ nor $V$ appears among these parabolic
subgroups. Hence, $\rho_{\{U,V\}}(\Phi(a_1))=1$.
Applying $\rho_{\{U,V\}}$ gives $(\tau_U\tau_V)^{2n_2}=1$, and thus, $n_2=0$.

We are then left with $\Phi(a_1)^{n_1}=1$.
Since $\Phi(a_1)$ has infinite order by Lemma~\ref{lem:An-infinite-order}(v), we obtain $n_1=0$.
Therefore, $a_1$ and $a_2$ are independent.
Together with the preceding lemmas, this proves that $\langle a_1,a_2\rangle\cong\mathbb Z^2$.

\medskip
(vi) {\bf Types $H_3$ and $H_4$}:
In type $H_3$, $\langle a_1\rangle$ is infinite cyclic subgroup in $PC_W$ by Lemmas~\ref{lem:An-ai-in-PCW}(v) and~\ref{lem:An-infinite-order}(v).
In type $H_4$, the same argument as in type $F_4$ shows that $\langle a_1,a_2\rangle\cong\mathbb Z^2$.

\medskip
(vii) {\bf Type $I_2(p)$ for $p\ge 5$}: By Lemma~\ref{lem:An-infinite-order}(vii), $a_1$ has infinite order. Therefore, $\langle a_1\rangle\cong\mathbb Z$.

\end{proof}

\section{Application: Non-quasi-isometry to RAAGs}\label{sec:application}

Throughout this section, except for Section~\ref{subsec:conspiciality}, we assume that $(W,S)$ is a finite Coxeter system and write $S=S_1\sqcup\cdots\sqcup S_k$ for the decomposition of the Coxeter diagram into connected components.
We also let $n_i=|S_i|$ and $n=|S|$.

Our aim is to prove Theorem~\ref{thm:mainRAAGQI} by extending
Genevois's results on cubical actions and manifold structures
for ordinary cactus groups to generalized cactus groups.
We prove that $PC_W$ acts conspicially on the Cayley graph $\mathscr C_W$ and has trivial cube-stabilizers on its cube-completion $\mathscr A_W$, which implies that $PC_W$ is torsion-free.
We then show that $\mathscr A_W$ splits as $\mathscr M_W\times [0,1]^k$, identify the vertex links of $\mathscr M_W$ in terms of nested complexes, and deduce that $\mathscr M_W$ is a \text{CAT(0)} cubical $(n-k)$-manifold.
This yields $\operatorname{asdim}(C_W)=n-k$ and leads to an obstruction to being quasi-isometric to a right-angled Artin group.

\subsection{Conspiciality of the pure action}\label{subsec:conspiciality}

For ordinary cactus groups, Genevois~\cite[Section~4]{Genevois} labeled hyperplanes by subsets of strands and used this labeling to prove the conspiciality of the action of the pure cactus groups.
These subsets also index the standard generators in his embedding into a right-angled Coxeter group.
For generalized cactus groups, we use the parabolic subgroups indexing the standard generators in Yu's embedding~ \cite[Section~2.2]{Yu} as hyperplane labels. We adapt this labeling argument to prove that the action $PC_W\curvearrowright\mathscr C_W$ is conspicial.
We first recall the definition of conspiciality and the relevant terminology concerning hyperplanes.

\begin{definition}[Conspiciality, self-intersections, self-osculations and inter-osculations]
Recall that an action on a median graph is called {\em conspicial} 
if it has no self-intersections, self-osculations, or inter-osculations.

Here, a \emph{self-intersection} is the data of a hyperplane $H$ of $X$ and an element $g\in G$ such that $H$ and $gH$ are transverse; a \emph{self-osculation} is the data of a hyperplane $H$ of $X$ and an element $g\in G$ such that $H$ and $gH$ are tangent; and an \emph{inter-osculation} is the data of two hyperplanes $H_1,H_2$ of $X$ and an element $g\in G$ such that $H_1$ is transverse to $H_2$ and $gH_1$ is tangent to $H_2$.
\end{definition}

Recall that $\mathscr{C}_W$ is the Cayley graph with respect to the generating set $\{\sigma_I\}$.
For an oriented edge $e=(h,h\sigma_I)$ of $\mathscr{C}_W$, we define
\[
\lambda(e)\ :=\ g_W(h)\,W_I\,g_W(h)^{-1}\ \le W.
\]
The label $\lambda(e)$ is independent of the orientation of $e$. Indeed, for the edge $e'=(h\sigma_I,h)$ with the opposite orientation, we have
\begin{align*}
\lambda(e')&=g_W(h\sigma_I)\,W_I\,g_W(h\sigma_I)^{-1}=g_W(h)\,\omega_I\,W_I\,\omega_I^{-1}\,g_W(h)^{-1}\\
&=g_W(h)\,W_I\,g_W(h)^{-1}=\lambda(e).
\end{align*}

We next show that all edges in the same hyperplane of $\mathscr{C}_W$ have the same label. Thus, we may write $\lambda(H)$ for the common label of a hyperplane $H$.

\begin{lemma}\label{lem:hyperplane-label}
Opposite edges in any 4-cycle of $\mathscr{C}_W$ have the same label under $\lambda$.
Consequently, $\lambda$ is constant on each hyperplane of $\mathscr{C}_W$.
\end{lemma}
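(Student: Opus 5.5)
By Lemma~\ref{lem:squares}, every 4-cycle of $\mathscr{C}_W$ is a left translate by some $h\in C_W$ of a defining 4-cycle, coming either from a commuting relation (ii) or from a nesting relation (iii). The plan is to handle these two kinds of 4-cycles separately and then deduce constancy on hyperplanes from the fact that hyperplanes are equivalence classes of edges under the relation ``opposite in a 4-cycle''. Since $\lambda$ does not depend on orientation, we may orient edges however is convenient. We write $x=g_W(h)$ throughout.

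\textbf{Commuting squares.} Here $I\perp J$, and the square has vertices $h,\ h\sigma_I,\ h\sigma_I\sigma_J=h\sigma_J\sigma_I,\ h\sigma_J$. The two edges labelled $\sigma_I$ are $(h,h\sigma_I)$ and $(h\sigma_J,h\sigma_J\sigma_I)$, with labels
\[
xW_Ix^{-1}\quad\text{and}\quad x\omega_JW_I\omega_J^{-1}x^{-1}.
\]
These coincide because $\omega_J\in W_J$ commutes elementwise with $W_I$, as $W_{I\cup J}=W_I\times W_J$. The two $\sigma_J$-edges are handled symmetrically.

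\textbf{Nesting squares.} Here $I\subset J$, and the square is read from the relation $\sigma_I\sigma_J=\sigma_J\sigma_{I'}$ with $I'=\omega_JI\omega_J^{-1}$. Note that $I'\subseteq J$ is again a subset of $S$, because conjugation by $\omega_J$ permutes $J$; this is the diagram automorphism recorded after Lemma~\ref{lem:central-longest-element}. The vertices are $h,\ h\sigma_I,\ h\sigma_I\sigma_J=h\sigma_J\sigma_{I'},\ h\sigma_J$.
\begin{itemize}
\item The edges $(h,h\sigma_J)$ and $(h\sigma_I,h\sigma_I\sigma_J)$ have labels $xW_Jx^{-1}$ and $x\omega_IW_J\omega_I^{-1}x^{-1}$. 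These agree since $\omega_I\in W_I\subseteq W_J$ normalizes $W_J$.
\item The edges $(h,h\sigma_I)$ and $(h\sigma_J,h\sigma_J\sigma_{I'})$ have labels $xW_Ix^{-1}$ and $x\omega_JW_{I'}\omega_J^{-1}x^{-1}$. Using $W_{I'}=\omega_JW_I\omega_J^{-1}$ and $\omega_J^2=1$ (Lemma~\ref{lem:central-longest-element}), the second equals $xW_Ix^{-1}$.
\end{itemize}

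\textbf{Constancy on hyperplanes.} In the median graph $\mathscr{C}_W$, a hyperplane is an equivalence class of edges under the transitive closure of being opposite in a 4-cycle. The square computations show that $\lambda$ is invariant under this generating relation, so $\lambda$ is constant on each hyperplane.

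The main point of care is the bookkeeping in the nesting case: one must correctly identify which edges of the translated relator are opposite, and check that $I'=\omega_JI\omega_J^{-1}$ is a genuine subset of $S$, so that $\sigma_{I'}$ is a generator and $W_{I'}=\omega_JW_I\omega_J$. Once that is in place, the remaining steps are direct computations using only $\omega_J^2=1$ and $\omega_I\in W_J$.
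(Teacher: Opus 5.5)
Your proposal is correct and follows the paper's proof essentially verbatim. It makes the same reduction via Lemma~\ref{lem:squares} to translated commuting and nesting squares, uses the same label computations (that $\omega_J$ centralizes $W_I$, that $\omega_I\in W_J$, and that $\omega_J^2=1$), and passes to hyperplanes in the same way, via chains of opposite edges; your remark that $\omega_J I\omega_J^{-1}\subseteq J$, so that $W_{\omega_J I\omega_J^{-1}}=\omega_J W_I\omega_J^{-1}$, simply makes explicit a step the paper uses silently.
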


\begin{proof}
By the last assertion of Lemma~\ref{lem:squares}, it is enough to see two types of 4-cycles obtained by the commuting and nesting relations.

First, assume $I\perp J$ and consider the 4-cycle
\[
(h,\ h\sigma_I,\ h\sigma_I\sigma_J,\ h\sigma_J,\ h).
\]
There are two pairs of opposite edges:
\[
\{(h,h\sigma_I),(h\sigma_J,h\sigma_J\sigma_I)\} \quad \text{and} \quad \{(h,h\sigma_J),(h\sigma_I,h\sigma_I\sigma_J=h\sigma_J\sigma_I)\}
\]
By symmetry, it suffices to consider the first pair.
By the definition of $\lambda$,
\[
\lambda(h\sigma_J,h\sigma_J\sigma_I)=g_W(h\sigma_J)W_Ig_W(h\sigma_J)^{-1}
=g_W(h)\omega_J\,W_I\,\omega_J^{-1}g_W(h)^{-1}.
\]
Since $I\perp J$, every element of $W_I$ commutes with every element of $W_J$. Hence, $\omega_JW_I\omega_J^{-1}=W_I$, and therefore, $\lambda(h\sigma_J,h\sigma_J\sigma_I)=\lambda(h,h\sigma_I)$.

Next, we assume that $I\subset J$ and consider the 4-cycle coming from $\sigma_I\sigma_J=\sigma_J\sigma_{\omega_J I\omega_J^{-1}}$:
\[
(h,\ h\sigma_I,\ h\sigma_I\sigma_J=h\sigma_J\sigma_{\omega_J I\omega_J^{-1}},\ h\sigma_J,\ h).
\]
There are two pairs of opposite edges:
\[
\{(h,h\sigma_I),(h\sigma_J,h\sigma_J\sigma_{\omega_J I\omega_J^{-1}})\} \quad \text{and} \quad \{(h,h\sigma_J),(h\sigma_I,h\sigma_I\sigma_J=h\sigma_J\sigma_{\omega_J I\omega_J^{-1}})\}.
\]
In the former case, we have
\begin{align*}
    \lambda(h\sigma_J,h\sigma_J\sigma_{\omega_J I\omega_J^{-1}})
    &=g_W(h\sigma_J)W_{\omega_J I\omega_J^{-1}}g_W(h\sigma_J)^{-1}\\
    &=g_W(h)\omega_J(\omega_JW_I\omega_J^{-1})\omega_J^{-1}g_W(h)^{-1}\\
    &=\lambda(h,h\sigma_I),
\end{align*}
where we use $\omega_J^2=1$.
Likewise, the other pair of opposite edges $(h,h\sigma_J)$ and $(h\sigma_I,h\sigma_I\sigma_J)$
satisfies $\lambda(h\sigma_I, h\sigma_I\sigma_J)=\lambda(h,h\sigma_J)$, since $I\subset J$ implies $W_I\leq W_J$ and hence $\omega_I\in W_J$.

\smallskip
Since any two edges in the same hyperplane can be connected by a sequence of opposite edges of 4-cycles, all edges in the same hyperplane have the same label.
\end{proof}

An edge belonging to a hyperplane $H$ is said to be {\em dual} to $H$.
Since any two edges dual to the same hyperplane can be connected by a sequence of opposite edges of $4$-cycles, Lemma~\ref{lem:hyperplane-label} shows that all edges dual to the same hyperplane have the same label.

We now establish three properties of the label $\lambda$ that will be used to prove the conspiciality of the action $PC_W\curvearrowright\mathscr{C}_W$.

\begin{lemma}\label{lem:local-inj}
Fix a vertex $h\in C_W$. If $I\neq J$, then $\lambda(h,h\sigma_I)\neq \lambda(h,h\sigma_J)$.
\end{lemma}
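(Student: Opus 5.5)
The plan is to reduce the statement to a claim about standard parabolic subgroups, and then invoke the classical fact that $T\mapsto W_T$ is injective.

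\emph{Step 1: conjugate away $h$.} Put $w:=g_W(h)$. By definition,
\[
\lambda(h,h\sigma_I)=wW_Iw^{-1}\quad\text{and}\quad\lambda(h,h\sigma_J)=wW_Jw^{-1}.
\]
Suppose these two labels coincide. Conjugating by $w^{-1}$ gives $W_I=W_J$ as subgroups of $W$.

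\emph{Step 2: recover $I$ from $W_I$.} We show that $W_I=W_J$ forces $I=J$, using $I=S\cap W_I$. The inclusion $I\subseteq S\cap W_I$ is clear. For the converse, let $s\in S\cap W_I$. Since $\ell(s\cdot s)=0<1=\ell(s)$, Lemma~\ref{lem:parabolic-right-descent} applied with $T=I$ and $w=s$ gives $s\in I$. Hence $I=S\cap W_I=S\cap W_J=J$, contradicting $I\neq J$. Alternatively, one may cite \cite[Proposition~2.4.1(v)]{Bjorner} directly, as the paper already does in the type $D_n$ argument.

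There is no real obstacle. The only care needed is that the identification of the label is made at the same vertex $h$ for both edges, so that the same $w$ is used. Because $\lambda$ is orientation-independent, we may orient both edges to start at $h$.
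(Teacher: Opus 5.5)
Your proposal is correct and follows the paper's proof: conjugate by $g_W(h)^{-1}$ to get $W_I=W_J$, then conclude $I=J$. The paper leaves that last step implicit, and your justification via $I=S\cap W_I$ and Lemma~\ref{lem:parabolic-right-descent} is a valid way to supply it.
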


\begin{proof}
If $\lambda(h,h\sigma_I)=\lambda(h,h\sigma_J)$, then $g_W(h)W_Ig_W(h)^{-1}=g_W(h)W_Jg_W(h)^{-1}$,
Hence, we have $W_I=W_J$, and therefore $I=J$.
\end{proof}

\begin{lemma}\label{lem:pure-invariant}
For every $p\in PC_W$ and every hyperplane $H$ of $\mathscr{C}_W$, one has $\lambda(pH)=\lambda(H)$.
\end{lemma}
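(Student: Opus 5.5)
The plan is to reduce the statement to a single edge. By Lemma~\ref{lem:hyperplane-label}, $\lambda$ is constant on each hyperplane, so it suffices to pick one edge $e$ dual to $H$ and compare $\lambda(pe)$ with $\lambda(e)$. The element $p$ acts on $\mathscr{C}_W$ by left translation, so the translate $pe$ is an edge dual to $pH$. This gives $\lambda(pH)=\lambda(pe)$ and $\lambda(H)=\lambda(e)$.

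Write $e=(h,h\sigma_I)$ for some $h\in C_W$ and $I\in\mathcal I$. Then $pe=(ph,ph\sigma_I)$, and by the definition of the label,
\[
\lambda(pe)=g_W(ph)\,W_I\,g_W(ph)^{-1}=g_W(p)\,g_W(h)\,W_I\,g_W(h)^{-1}\,g_W(p)^{-1},
\]
because $g_W$ is a homomorphism. Since $p\in PC_W=\ker(g_W)$, we have $g_W(p)=1$. The expression therefore collapses to $g_W(h)W_Ig_W(h)^{-1}=\lambda(e)$.

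Once the single-edge computation is done, there is no real obstacle left. The only point to check is that the left action of $C_W$ carries the hyperplane dual to $e$ onto the hyperplane dual to $pe$. This holds because left translation is a graph automorphism, so it preserves $4$-cycles and hence the equivalence relation generated by opposite edges of $4$-cycles. Combined with the well-definedness of $\lambda(H)$ from Lemma~\ref{lem:hyperplane-label}, this finishes the argument.
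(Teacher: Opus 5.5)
Your proposal is correct and matches the paper's proof: both pick an edge $e=(h,h\sigma_I)$ dual to $H$, note that $pe=(ph,ph\sigma_I)$ is dual to $pH$, and use that $g_W$ is a homomorphism with $g_W(p)=1$ to conclude $\lambda(pe)=\lambda(e)$. Your remark that left translation is a graph automorphism, and so preserves $4$-cycles and hence hyperplanes, spells out a step the paper uses without comment.
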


\begin{proof}
Let $e=(h,h\sigma_I)$ be an oriented edge dual to $H$. Then $p\cdot e=(ph,ph\sigma_I)$ is dual to $pH$ and
\[
\lambda(p\cdot e)=g_W(ph)W_Ig_W(ph)^{-1}
=g_W(p)\,g_W(h)W_Ig_W(h)^{-1}\,g_W(p)^{-1}.
\]
Since $p\in\ker(g_W)=PC_W$, we have $\lambda(pH)=\lambda(p \cdot e)=\lambda(e)=\lambda(H)$.
\end{proof}

\begin{lemma}\label{lem:crossing-by-label}
Let $H$ and $K$ be hyperplanes of $\mathscr{C}_W$, 
and let $e=(h,h\sigma_I)$ and $e'=(h,h\sigma_J)$ be distinct edges incident to $h$, dual to $H$ and $K$, respectively.
Then the following are equivalent:
\begin{enumerate}[\rm (i)]
\item The edges $e$ and $e'$ span a $4$-cycle.
\item The subsets $I$ and $J$ are orthogonal or nested.
\item The parabolic subgroups $\lambda(H)$ and $\lambda(K)$ are orthogonal or nested in $W$.    
\end{enumerate}
\end{lemma}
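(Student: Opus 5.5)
The equivalence (i)$\Leftrightarrow$(ii) is exactly Lemma~\ref{lem:squares}, applied at the vertex $h$ to the two edges $(h,h\sigma_I)$ and $(h,h\sigma_J)$. The substance is therefore (ii)$\Leftrightarrow$(iii). Set $w:=g_W(h)$. By definition we have $\lambda(H)=\lambda(e)=wW_Iw^{-1}$ and $\lambda(K)=\lambda(e')=wW_Jw^{-1}$. Conjugation by $w$ is an automorphism of $W$, and it preserves both inclusion and the orthogonality of subgroups, since the conditions $PQ=QP$ and bijectivity of $P\times Q\to PQ$ are transported by conjugation. So (iii) holds if and only if $W_I$ and $W_J$ are orthogonal or nested as parabolic subgroups. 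It remains to compare this with $I,J$ being orthogonal or nested as subsets.

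For nesting, if $I\subseteq J$ then $W_I\le W_J$. Conversely, if $W_I\le W_J$, then every $s\in I$ lies in $W_J$. A generator $s$ satisfies $\ell(s\cdot s)<\ell(s)$, so Lemma~\ref{lem:parabolic-right-descent} gives $s\in J$. Hence $I\subseteq J$.

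For orthogonality, $I\perp J$ means $W_{I\cup J}=W_I\times W_J$, and then the multiplication map $W_I\times W_J\to W_IW_J$ is an isomorphism, so the parabolic subgroups are orthogonal. Conversely, assume $W_I$ and $W_J$ are orthogonal as parabolic subgroups. Then they commute elementwise and intersect trivially, because the map $(p,q)\mapsto pq$ is a homomorphism from the direct product. Trivial intersection forces $I\cap J=\emptyset$, again by Lemma~\ref{lem:parabolic-right-descent}: an $s\in I\cap J$ would lie in both subgroups. Elementwise commutation then gives $st=ts$, that is $m(s,t)=2$, for all $s\in I$ and $t\in J$. This is exactly the diagrammatic characterization of $I\perp J$ recorded after the definition of orthogonality.

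Combining these steps, (ii)$\Leftrightarrow$(iii). The only slightly delicate points are the following. First, we need that orthogonality of parabolic subgroups implies trivial intersection together with commutation; this follows because the multiplication map is a homomorphism exactly when the two factors commute elementwise. Second, we must handle the degenerate case where the subgroups are both nested and orthogonal. That case cannot occur for nonempty $I,J$, but it needs no separate treatment, since the statement uses ``or''.
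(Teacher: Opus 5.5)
Your proof is correct and follows the paper's route. You get (i)$\Leftrightarrow$(ii) from Lemma~\ref{lem:squares}, and (ii)$\Leftrightarrow$(iii) from the fact that conjugation by $g_W(h)$ is an automorphism of $W$ preserving orthogonality and nesting; you additionally make explicit the translation between orthogonality/nesting of the subsets $I,J$ and of the subgroups $W_I,W_J$, which the paper leaves implicit. Your citation of Lemma~\ref{lem:parabolic-right-descent} for $I\cap J=\emptyset$ is unnecessary, since any $s\in I\cap J$ lies in $W_I\cap W_J$ trivially.
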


\begin{proof}
By Lemma~\ref{lem:squares}, the edges $e$ and $e'$ span a 4-cycle if and only if $I$ and $J$ are orthogonal or nested.
Thus, (i) and (ii) are equivalent.

To show the equivalence of (ii) and (iii), note that
\[
\lambda(H)=\lambda(h,h\sigma_I)=g_W(h)W_Ig_W(h)^{-1},
\qquad
\lambda(K)=\lambda(h,h\sigma_J)=g_W(h)W_Jg_W(h)^{-1}.
\]
Since conjugation by $g_W(h)$ is an automorphism of $W$, the subgroups
$W_I$ and $W_J$ are orthogonal or nested if and only if
$\lambda(H)$ and $\lambda(K)$ are orthogonal or nested.
Thus, (ii) and (iii) are equivalent.
\end{proof}

Now we are ready to 
prove that the action $PC_W\curvearrowright \mathscr{C}_W$ is conspicial.
We use the hyperplane language of \cite[\S2.1]{Genevois}.

\begin{theorem}\label{thm:pcw-conspicial}
The action $PC_W\curvearrowright \mathscr{C}_W$ is conspicial.
\end{theorem}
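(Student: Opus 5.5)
The plan is to verify the three defining conditions of conspiciality separately. In each case the geometric configuration will be translated into a statement about labels, using Lemmas~\ref{lem:hyperplane-label}, \ref{lem:local-inj}, \ref{lem:pure-invariant} and \ref{lem:crossing-by-label}. The guiding principle is that $PC_W$ preserves labels (Lemma~\ref{lem:pure-invariant}). Two hyperplanes $H$ and $pH$ with $p\in PC_W$ therefore carry the same label $\lambda(H)$, and they are either equal or must be separated by the local structure.

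\textbf{Self-intersections.} Suppose $H$ and $pH$ are transverse for some $p\in PC_W$. Transversality in a median graph produces a $4$-cycle whose two pairs of opposite edges are dual to $H$ and to $pH$. Hence there is a vertex $h$ with two distinct incident edges $(h,h\sigma_I)$ and $(h,h\sigma_J)$, dual to $H$ and $pH$ respectively. Since $H\neq pH$, these edges are distinct, so $I\neq J$. Lemma~\ref{lem:local-inj} then gives $\lambda(H)\neq\lambda(pH)$, which contradicts Lemma~\ref{lem:pure-invariant}.

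\textbf{Self-osculations.} Suppose $H$ and $pH$ are tangent. By definition they are distinct and not transverse, and some vertex $h$ has two incident edges dual to $H$ and $pH$ that do not span a $4$-cycle. Exactly as above, these edges are $(h,h\sigma_I)$ and $(h,h\sigma_J)$ with $I\neq J$. Lemma~\ref{lem:local-inj} yields distinct labels, again a contradiction. We should note the subtle point that $H\neq pH$ here. If $pH=H$, the configuration is not an osculation at all, because tangency is defined for distinct hyperplanes.

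\textbf{Inter-osculations.} Let $H_1,H_2$ be transverse and let $pH_1$ be tangent to $H_2$. From transversality and Lemma~\ref{lem:crossing-by-label} applied at a vertex of a square where $H_1$ and $H_2$ cross, the labels $\lambda(H_1)$ and $\lambda(H_2)$ are orthogonal or nested. From the tangency, take a vertex $h$ with non-square-spanning edges dual to $pH_1$ and $H_2$. Applying Lemma~\ref{lem:crossing-by-label} at $h$, the labels $\lambda(pH_1)$ and $\lambda(H_2)$ are neither orthogonal nor nested. But $\lambda(pH_1)=\lambda(H_1)$ by Lemma~\ref{lem:pure-invariant}, which is a contradiction.

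The main obstacle I expect is making precise the hyperplane-language facts from \cite[\S2.1]{Genevois}. Specifically, I need that transversality (respectively tangency) of two hyperplanes is witnessed at a single vertex by two incident edges that do (respectively do not) span a square. This holds in median graphs because hyperplanes are the equivalence classes of edges under the opposite-sides-of-squares relation, and it is what lets the purely local Lemmas~\ref{lem:local-inj} and \ref{lem:crossing-by-label} apply.
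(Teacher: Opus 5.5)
Your proof is correct and follows the paper's argument essentially step for step. In each of the three cases you pass to a vertex that witnesses the transversality or tangency, and you combine label invariance under $PC_W$ (Lemma~\ref{lem:pure-invariant}) with local injectivity of labels (Lemma~\ref{lem:local-inj}) for self-intersections and self-osculations, and with Lemma~\ref{lem:crossing-by-label} for inter-osculations, exactly as the paper does, relying on the same local hyperplane facts the paper takes from \cite[\S2.1]{Genevois}.
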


\begin{proof}
By Lemma~\ref{lem:hyperplane-label}, all edges dual to the same hyperplane have the same label,
and by Lemma~\ref{lem:pure-invariant},
$\lambda(pH)=\lambda(H)$ for every $p\in PC_W$ and every hyperplane $H$ of $\mathscr{C}_W$.
{\setlength{\leftmargini}{2em}
\begin{itemize}
\item \emph{No self-intersections.}
Suppose that there exists a hyperplane $H$ and $p \in PC_W$ such that $H$ and $pH$ are transverse.
Then there exists a vertex $h$, and there exist two distinct edges $e=(h, h\sigma_I)$ and $e'=(h, h\sigma_J)$ dual to $H$ and $pH$, respectively, and spanning a 4-cycle.
By Lemmas~\ref{lem:hyperplane-label} and~\ref{lem:pure-invariant},
\[
\lambda(e)=\lambda(H)=\lambda(pH)=\lambda(e').
\]
Lemma~\ref{lem:local-inj} then implies that $I=J$, and hence $e=e'$,
contradicting the choice of $e$ and $e'$ as distinct edges.
Therefore, no self-intersection occurs.
\item \emph{No self-osculation.}
Suppose there exist a hyperplane $H$ and $p\in PC_W$ such that $H$ and $pH$ are tangent at some vertex $h$.
Then, there exist two distinct intersecting edges $e=(h,h\sigma_I)$ and $e'=(h,h\sigma_J)$ dual to $H$ and $pH$, respectively, which do not span a 4-cycle.
By Lemmas~\ref{lem:hyperplane-label} and~\ref{lem:pure-invariant},
$\lambda(e)=\lambda(H)=\lambda(pH)=\lambda(e')$.
As in the self-intersection case, Lemma~\ref{lem:local-inj} implies that
$e=e'$, a contradiction.
Therefore, no self-osculation occurs.
\item \emph{No inter-osculations.}
Suppose that there exist hyperplanes $H,K$ and an element $p\in PC_W$ such that $H$ and $K$ are transverse, while $pH$ and $K$ are tangent.
Since $H$ and $K$ are transverse, there exist distinct dual edges meeting
at a vertex and spanning a $4$-cycle. Hence, by
Lemma~\ref{lem:crossing-by-label}, $\lambda(H)$ and $\lambda(K)$ are
orthogonal or nested.
\end{itemize}
}

Since $pH$ and $K$ are tangent, there exists a vertex $h'$, and there exist two distinct edges
\[
e_{pH}=(h',h'\sigma_{I'})
\quad\text{and}\quad
e_K=(h',h'\sigma_{J'})
\]
dual to $pH$ and $K$, respectively,
which do not span a $4$-cycle.
Then by Lemmas~\ref{lem:hyperplane-label} and~\ref{lem:pure-invariant}, we have
\[
\begin{aligned}
\lambda(H)&=\lambda(pH)=\lambda(e_{pH})=g_W(h')W_{I'}g_W(h')^{-1}, \quad \text{and} \\
\lambda(K)&=\lambda(e_K)=g_W(h')W_{J'}g_W(h')^{-1}.
\end{aligned}
\]
Since $H$ and $K$ are transverse, Lemma~\ref{lem:crossing-by-label}
implies that $\lambda(H)$ and $\lambda(K)$ are orthogonal or nested.
By Lemma~\ref{lem:pure-invariant}, $\lambda(pH)=\lambda(H)$, and hence $\lambda(pH)$ and $\lambda(K)$ are orthogonal or nested.
Therefore, $I'$ and $J'$ are orthogonal or nested.
Lemma~\ref{lem:crossing-by-label} implies that $e_{pH}$ and $e_K$
span a $4$-cycle, contradicting the tangency of $pH$ and $K$.
Therefore, no inter-osculation occurs.
Thus, the action $PC_W\curvearrowright\mathscr C_W$ is conspicial.
\end{proof}

\subsection{Trivial cube-stabilizers}


We prove that the action of $PC_W$ on $\mathscr A_W$ has trivial cube stabilizers.
For ordinary cactus groups, Genevois ordered the strand labels of the hyperplanes crossing a cube and showed that the corresponding composition of strand reversals induces a nontrivial permutation~ \cite[the proof of Theorem~4.2]{Genevois}.
We retain this approach, replacing the argument about strand permutations with one using longest elements and parabolic subgroups of the Coxeter group.
We first recall the terminology of tubes, tubings, and nested complexes~\cite{Carr_Devadoss, Manneville_Pilaud}.

\begin{definition}[Tubes, tubings, and nested complexes]
\label{def:tubes-compatible-nested}
Let $\Gamma$ be a finite graph. A nonempty subset $A\subset V(\Gamma)$ is called
{\em connected} if the induced subgraph $\Gamma[A]$ is connected.
Two connected subsets $A,B\subset V(\Gamma)$ are {\em compatible} if either they are nested, that is, $A\subset B$ or $B\subset A$, or they are disjoint and non-adjacent, that is, $A\cap B=\emptyset$ and no vertex of $A$ is adjacent to a vertex of $B$.

When $\Gamma$ is connected, a \emph{tube} of $\Gamma$ is a nonempty proper connected subset $T\subsetneq V(\Gamma)$. A \emph{tubing} is a collection of pairwise compatible tubes.
The \emph{nested complex} $N(\Gamma)$ is the simplicial complex whose vertices are the tubes of $\Gamma$ and whose simplices are tubings.
\end{definition}

\begin{proposition}\label{prop:pcw-trivial-cube-stab}
The action $PC_W\curvearrowright \mathscr A_W$ has trivial cube-stabilizers.
\end{proposition}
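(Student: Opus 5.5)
Suppose $p\in PC_W$ stabilizes a cube $Q$ of $\mathscr A_W$; we show $p=1$. The plan follows Genevois's argument for ordinary cactus groups, replacing strand reversals by longest elements of parabolic subgroups.

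\textbf{Step 1: reduce to a statement inside $W$.} Let $h$ be a vertex of $Q$. The edges of $Q$ at $h$ are $(h,h\sigma_{I_1}),\dots,(h,h\sigma_{I_d})$, where $I_1,\dots,I_d\in\mathcal I$ are pairwise orthogonal or nested, by Lemma~\ref{lem:squares} and the flag property of the cube-completion. The vertices of $Q$ are then the elements $h\sigma_{J_1}\cdots\sigma_{J_r}$ obtained by traversing the corresponding parallel classes. Since $p$ stabilizes $Q$ and acts freely on vertices (left multiplication), $ph=h'$ for some vertex $h'\in Q$. Write $h'=hx$, where $x$ is a product over a subset $F\subseteq\{1,\dots,d\}$ of the hyperplanes of $Q$, ordered suitably. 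The labels $\lambda$ of the hyperplanes of $Q$ are pairwise distinct by Lemma~\ref{lem:local-inj}, and $p$ preserves labels by Lemma~\ref{lem:pure-invariant}. Hence $p$ maps each hyperplane of $Q$ to itself. Moreover $g_W(ph)=g_W(h)$ because $p\in\ker g_W$, so $g_W(x)=1$. It therefore suffices to show that if $F\neq\emptyset$, then $g_W(x)\neq 1$. That forces $x=1$, hence $ph=h$, and then $p=1$ because the action on vertices is free.

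\textbf{Step 2: order the hyperplanes and compute $g_W(x)$.} Translate by $h^{-1}$ so that $h=1$. The hyperplanes of $Q$ crossed by the geodesic from $1$ to $x$ have labels $W_{K_1},\dots,W_{K_r}$, where the $K_j$ are the subsets $I_j$ with $j\in F$; these are pairwise orthogonal or nested elements of $\mathcal I$. Using the nesting relation (iii), we may choose the geodesic so that it crosses the hyperplanes in order of decreasing inclusion: an outermost $K$ first, then those contained in it, and so on. With this ordering each generator read along the path is $\sigma$ of a conjugated set. Applying $g_W$ and using $\omega_J W_I\omega_J^{-1}=W_{\omega_J I\omega_J}$, the image $g_W(x)$ becomes a product of the longest elements $\omega_{K_j}$, taken in an order compatible with the forest structure of the laminar family $\{K_j\}$. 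Alternatively, order by increasing inclusion: then $g_W(x)=\omega_{K_{1}}\cdots\omega_{K_r}$ directly, with $K$'s listed innermost-first and orthogonal ones commuting.

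\textbf{Step 3 (main obstacle): such a product is nontrivial.} Take a maximal element $K$ of the laminar family $\{K_j\}$. The factors supported in $K$ form the product $\omega_K\cdot y$, where $y$ is a product of $\omega$'s of proper subsets of $K$, all lying in $W_{K'}$ for proper subsets. Factors outside $K$ lie in parabolic subgroups orthogonal to $W_K$, so $g_W(x)=1$ forces $\omega_K y=1$ in $W_K$.

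I would argue by induction on $|K|$ that $\omega_K y\neq 1$, using lengths. One has $\ell(\omega_K y)=\ell(\omega_K)-\ell(y)$. So $\omega_K y=1$ would require $\ell(y)=\ell(\omega_K)$, that is, $y=\omega_K$. But $y$ is a product of elements of $W_{K_i'}$ for pairwise orthogonal maximal proper subtubes $K_i'\subsetneq K$. Hence $y\in W_{\bigcup K_i'}$, a proper standard parabolic subgroup, and by Lemma~\ref{lem:parabolic-right-descent} it cannot equal $\omega_K$, which has every $s\in K$ as a right descent. This gives $g_W(x)\neq1$.

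The delicate point is Step 2: one must verify that the path from $h$ to $h'$ inside $Q$ can really be written as a product of generators whose $g_W$-image decomposes by maximal tubes in this way. If the conjugation bookkeeping fails, I would instead compute $g_W(x)$ hyperplane-by-hyperplane via the labels $\lambda$ and apply the same length argument to the outermost label.
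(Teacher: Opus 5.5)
Your proposal is correct and is essentially the paper's proof. It uses the same reduction to showing $g_W(x)\neq 1$ for a nontrivial vertex $x$ of a cube at $1$, and the same innermost-first ordering giving $g_W(x)=\omega_{K_1}\cdots\omega_{K_r}$; the paper checks via the square relations that the outer edge labels survive, which is the bookkeeping you flagged. It then concludes the same way: by connectivity of $K$, a maximal $K$ is not covered by the other tubes, so Lemma~\ref{lem:parabolic-right-descent} applies. Two small differences: the paper lumps the orthogonal tubes into $T=\bigcup_{j<r}I_j$ instead of splitting them off, and your length identity is superfluous, since $\omega_K y=1$ already means $y=\omega_K$.
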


\begin{proof}
Suppose that $p \in PC_W$ stabilizes a cube $Q$, and choose $x \in Q^{(0)}$. Then $R:=x^{-1}Q$ is a cube containing $1$. Since $pQ=Q$, we have $x^{-1}(px) \in R^{(0)}$. Moreover, since $PC_W$ is normal in $C_W$, it follows that $x^{-1}px\in PC_W$.
Therefore, if $R^{(0)} \cap PC_W={1}$ for every cube $R$ containing $1$, then $x^{-1}px=1$, and hence $p=1$. Thus, after relabeling such a cube $R$ as $Q$, it is enough to show that $Q^{(0)}\cap PC_W=\{1\}$ for every cube $Q$ containing $1$. To see this, we claim that $g_W(h)\neq 1$ for each $h\in Q^{(0)} \setminus \{1\}$.

Fix $h\in Q^{(0)} \setminus \{1\}$, and let $F$ be the smallest face of $Q$ containing $1$ and $h$.
Thus, $1$ and $h$ are opposite vertices of $F$.
Let $\mathcal{U}$ consist of the indices $I$ for which $(1,\sigma_I)$ is an edge of $F$.
If $|\mathcal{U}|=1$, say $\mathcal U=\{M\}$, then $F$ is an edge, and hence $h=\sigma_M$. Therefore, $g_W(h)=\omega_M\neq 1$.
Thus, we may assume that $|\mathcal U|=r\ge 2$.
By Lemma~\ref{lem:squares}, the elements of $\mathcal{U}$ are pairwise orthogonal or nested.

Choose a maximal element $M \in \mathcal{U}$, and order $\mathcal{U}=\{I_1,\ldots,I_r\}$ so that $I_r=M$ and $I_i\subsetneq I_j$ implies $i<j$ for $1\le i,j\le r$.
Since the elements of $\mathcal U$ are pairwise compatible, for $k<j$, either $I_k\subsetneq I_j$ or $I_k\perp I_j$.
In either case, the defining relations give a square with vertices $(v,v\sigma_{I_k},v\sigma_{I_k}\sigma_{I_j},v\sigma_{I_j})$ in this cyclic order.
Hence, the edge opposite to $(v,v\sigma_{I_j})$ is $(v\sigma_{I_k},\,v\sigma_{I_k}\sigma_{I_j})$ which is also labeled $\sigma_{I_j}$.

We now use the preceding observation to follow the directions $I_1, \dots, I_r$ successively from~$1$. 
This gives a path in $F$
\[
1 \to \sigma_{I_1} \to \sigma_{I_1}\sigma_{I_2} \to \cdots
\to \sigma_{I_1} \cdots \sigma_{I_r},
\]
whose successive edge labels are $\sigma_{I_1}, \dots, \sigma_{I_r}$.
Since this path ends at the opposite vertex $h$, we obtain 
\[
h=\sigma_{I_1}\cdots\sigma_{I_{r-1}}\sigma_M.
\]

Let $T=\bigcup_{j<r} I_j$.
We claim that $M \setminus T \neq \emptyset$. 
Suppose to the contrary that $M\subseteq T$.
Let $J_1, \dots, J_m$ be maximal members of $\{I \in \mathcal{U} \mid I \subsetneq M\}$ with respect to inclusion. Then $M=\bigcup_{\substack{j<r\ I_j\subsetneq M}} I_j=J_1 \cup \cdots \cup J_m$.
Moreover, $J_1, \dots, J_m$ are pairwise disjoint and non-adjacent, and hence there is no edge in the Coxeter diagram joining two distinct $J_i$'s.
Therefore, if $m\ge2$, it follows that the induced Coxeter diagram on $M$ is disconnected, contradicting the fact that $M\in \mathcal{I}$. If $m=1$, then $M=J_1$, contradicting $J_1\subsetneq M$.
Thus, $M\not\subseteq T$, and hence $M \setminus T \neq \emptyset$.

Choose $s \in M \setminus T$. Since $\omega_M$ is the longest element of $W_M$, we have $\ell(\omega_M s) < \ell(\omega_M)$.
Thus, Lemma~\ref{lem:parabolic-right-descent} gives $\omega_M \notin W_T$.
This implies that $g_W(h)=\omega_{I_1}\cdots\omega_{I_{r-1}}\omega_M\neq 1$ since $\omega_{I_1}\cdots\omega_{I_{r-1}} \in W_T$, proving $Q^{(0)}\cap PC_W=\{1\}$.
\end{proof}

\begin{corollary}\label{cor:pcw-torsionfree}
The group $PC_W$ is torsion-free.
\end{corollary}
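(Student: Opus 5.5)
The plan is to deduce torsion-freeness from Proposition~\ref{prop:pcw-trivial-cube-stab} using the standard fact that a finite group acting on a finite-dimensional CAT(0) cube complex has a global fixed point. Let $p\in PC_W$ have finite order and set $F=\langle p\rangle$. By the proof of Theorem~\ref{thm:maximal_rank_only_if}, the cube-completion $\mathscr A_W$ is a CAT(0) cube complex of dimension at most $|\mathcal I|$. The action of $C_W$, and hence of $PC_W$, on $\mathscr A_W$ is by cubical automorphisms. Therefore $F$ acts on $\mathscr A_W$ by isometries of the CAT(0) metric.

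First, since $F$ is finite, every orbit $F\cdot x$ is bounded. The Bruhat--Tits fixed point theorem (Bridson--Haefliger, Corollary~II.2.8) gives a point $c\in\mathscr A_W$ fixed by $F$, namely the circumcenter of the orbit. This uses only the completeness of $\mathscr A_W$, which holds because it is a finite-dimensional cube complex.

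Second, let $Q$ be the unique cube of $\mathscr A_W$ that contains $c$ in its relative interior, i.e.\ the support of $c$. Each element of $F$ is a cubical automorphism, so it carries the support of $c$ to the support of its image, which is again $c$. Hence $F$ stabilizes the cube $Q$ setwise.

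Third, Proposition~\ref{prop:pcw-trivial-cube-stab} says that the stabilizer in $PC_W$ of any cube is trivial. So $F=\{1\}$, which gives $p=1$, and $PC_W$ is torsion-free.

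The only step that needs care is the stabilizer statement. If the cube stabilizer in Proposition~\ref{prop:pcw-trivial-cube-stab} were meant pointwise, we would need $p$ to fix $Q$ pointwise, but setwise stabilization is all we get. Its proof, however, shows more: any $p$ with $pQ=Q$ is trivial, because $p$ maps a vertex $x$ of $Q$ to another vertex of $Q$, and $Q^{(0)}$ meets each $PC_W$-coset trivially. So the setwise version holds, and it is the one we invoke.
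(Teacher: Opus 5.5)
Your proposal is correct and follows the paper's proof: apply the Bruhat--Tits fixed point theorem to $\langle p\rangle$, pass to the cube that contains the fixed point in its relative interior, and invoke Proposition~\ref{prop:pcw-trivial-cube-stab}. Your remark about setwise versus pointwise stabilizers is accurate, since the proposition's proof assumes only $pQ=Q$.
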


\begin{proof}
Let $g\in PC_W$ have finite order. Since $\mathscr A_W$ is a finite-dimensional $\text{CAT(0)}$ cube complex, by the Bruhat–Tits fixed point theorem, the finite group $\langle g\rangle$ fixes a point of $\mathscr A_W$.
Thus, $\langle g\rangle$ stabilizes the minimal cube containing this fixed point.
By Proposition~\ref{prop:pcw-trivial-cube-stab}, every cube-stabilizer is trivial.
Therefore, $g=1$.
\end{proof}

\subsection{A bounded factor in the cube-completion}\label{subsec:bounded-factor}

As in the discussion of cactus groups \cite[Section~7]{Genevois}, the generalized cactus group also admits a decomposition that splits off a bounded cubical factor in the cube-completion.
Let $\mathcal I^\circ=\mathcal I\setminus\{S_1,\ldots,S_k\}$, and let $C_W^\circ\le C_W$ be the subgroup generated by $\{\sigma_I\mid I\in\mathcal I^\circ\}$.

We first note that the elements $\sigma_{S_1},\ldots,\sigma_{S_k}$ generate a subgroup isomorphic to $\mathbb{Z}_2^k$.
Moreover, conjugation by $\sigma_{S_i}$ preserves the set of generators of $C_W^\circ$. Indeed, if $I\subset S_i$ then
the defining relations give $\sigma_{S_i}\sigma_I\sigma_{S_i}=\sigma_{\omega_{S_i}I\omega_{S_i}^{-1}}$.
On the other hand, if $I\subset S_j$ for some $j\neq i$, then $S_i\perp I$, and hence, $\sigma_{S_i}\sigma_I\sigma_{S_i}=\sigma_I$.
Thus, $\sigma_{S_i}C_W^\circ\sigma_{S_i}=C_W^\circ$. Consequently, $C_W^\circ$ is normal in $C_W$, and we obtain a semidirect product
decomposition
\begin{equation}\label{eq:cw-semidirect}
C_W \cong C_W^\circ\rtimes \mathbb{Z}_2^k,
\end{equation}
where the action of $\mathbb{Z}_2^k$ on $C_W^\circ$ permutes the generating set
$\{\sigma_I\mid I\in\mathcal I^\circ\}$.

Let $Y=\mathrm{Cay}(C_W^\circ,\{\sigma_I\mid I\in\mathcal I^\circ\})$ and  $Q_k=\mathrm{Cay}(\mathbb{Z}_2^k,\{\sigma_{S_1},\ldots,\sigma_{S_k}\})$ be Cayley graphs.
As the action in~\eqref{eq:cw-semidirect} preserves the generating set of $C_W^\circ$,
the Cayley graph of $C_W$ with respect to
\[
\{\sigma_I\mid I\in\mathcal I^\circ\}\ \cup\ \{\sigma_{S_1},\ldots,\sigma_{S_k}\}
\]
splits as a direct product:
\[
\mathscr C_W \cong Y\times Q_k.
\]
Since $Q_k$ is the $1$-skeleton of the $k$-cube, its cube-completion is $[0,1]^k$. Let $\mathscr M_W$ denote the cube-completion of $Y$.
Then taking cube-completions yields
\begin{equation}\label{eq:AW-splits}
\mathscr A_W \cong \mathscr M_W\times [0,1]^k.
\end{equation}
In particular, the projection $\mathscr M_W\times [0,1]^k\to \mathscr M_W$ is a
quasi-isometry, since $[0,1]^k$ has finite diameter.

\subsection{Asymptotic dimension in finite Coxeter type}


Following Genevois's analysis of cactus groups, we focus on the unbounded factor $\mathscr M_W$ in the decomposition $\mathscr A_W \cong \mathscr M_W\times [0,1]^k$ obtained in Section~\ref{subsec:bounded-factor}.
For ordinary cactus groups, Genevois describes the link of a (equivalently, any) vertex of the corresponding factor in terms of compatible intervals and proves by induction that they are spheres~\cite[the proof of Proposition~7.1]{Genevois}.

In this subsection, we follow the same strategy for generalized cactus groups: we identify the links of vertices in $\mathscr M_W$ with joins of nested complexes associated with the connected components of the Coxeter diagram.
Using the graph associahedra~\cite[Theorem~2.6]{Carr_Devadoss}, we realize each nested complex as the boundary complex of a simplicial polytope dual to the corresponding graph associahedron.
This shows that the vertex links are spheres, and hence that $\mathscr M_W$ is a cubical manifold.
As in \cite[Remark~7.6]{Genevois}, this manifold structure then allows us to compute $\operatorname{asdim}(C_W)=\operatorname{asdim}(PC_W)$ from the dimension of $\mathscr M_W$ as a $\mathrm{CAT}(0)$ cube complex.

We keep the terminology of tubes, tubings, and nested complexes from Definition~\ref{def:tubes-compatible-nested}. 
We use the convention
\[
\mathbb S^{-1}=\{\emptyset\}.
\]
Note that $N(S_i)=\{\emptyset\}=\mathbb S^{-1}$ whenever a connected component $S_i$ consists of a single vertex, and $X * \{\emptyset\}=X$ for any simplicial complex $X$.
In addition, we describe the combinatorics of vertex links in $\mathscr M_W$, and relate them to nested complexes of the Coxeter diagram.

\begin{lemma}\label{lem:link-generalized-cactus}
Let $v$ be a vertex of $\mathscr M_W$. Then
\[
\operatorname{Lk}_{\mathscr M_W}(v)
\cong
N(S_1)*\cdots *N(S_k),
\]
where $N(S_i)$ denotes the nested complex of the Coxeter diagram induced by $S_i$.
\end{lemma}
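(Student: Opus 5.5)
We first reduce to a single vertex. The group $C_W^\circ$ acts transitively on the vertices of $Y$ by left multiplication, and this action extends to cubical automorphisms of $\mathscr M_W$. Hence all vertex links of $\mathscr M_W$ are isomorphic, and it suffices to compute $\operatorname{Lk}_{\mathscr M_W}(1)$.

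Next we use the splitting \eqref{eq:AW-splits}. The link of a vertex in a product of cube complexes is the join of the links of the factors. The link of a vertex of $[0,1]^k$ is a $(k-1)$-simplex, and its vertices correspond to the edges $\sigma_{S_1},\dots,\sigma_{S_k}$. Therefore $\operatorname{Lk}_{\mathscr A_W}(1)\cong \operatorname{Lk}_{\mathscr M_W}(1)*\Delta^{k-1}$. A cleaner route avoids cancelling the simplex: the edges of $\mathscr M_W$ at $1$ are exactly the edges $(1,\sigma_I)$ with $I\in\mathcal I^\circ$. So $\operatorname{Lk}_{\mathscr M_W}(1)$ is the full subcomplex of $\operatorname{Lk}_{\mathscr A_W}(1)$ spanned by the vertices labelled by $\mathcal I^\circ$. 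This holds because $\mathscr M_W\times\{0\}$ is a convex subcomplex of $\mathscr A_W$, so a cube of $\mathscr A_W$ all of whose edges at $1$ lie in $\mathscr M_W$ is itself a cube of $\mathscr M_W$.

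We then describe $\operatorname{Lk}_{\mathscr A_W}(1)$ combinatorially, as in the proof of Theorem~\ref{thm:maximal_rank_only_if}. Its vertices are the elements of $\mathcal I$, which are the connected subsets of the Coxeter diagram $\Gamma$, since $W$ is finite. By Lemma~\ref{lem:squares}, two vertices are adjacent exactly when the corresponding subsets are nested or orthogonal. Orthogonality here means disjoint and non-adjacent, because $I\perp J$ forces $I\cap J=\emptyset$. Thus adjacency coincides with compatibility in the sense of Definition~\ref{def:tubes-compatible-nested}. Since $\mathscr A_W$ is CAT(0), this link is flag. Hence its simplices are exactly the collections of pairwise compatible connected subsets.

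Restricting to $\mathcal I^\circ$ removes the full components $S_i$. The remaining vertices are exactly the tubes of the diagrams $\Gamma_1,\dots,\Gamma_k$, i.e.\ the nonempty proper connected subsets of each $S_i$. Two such sets lying in different components are always orthogonal, so every simplex decomposes uniquely into tubings of the separate components. This gives
\[
\operatorname{Lk}_{\mathscr M_W}(1)\cong N(S_1)*\cdots*N(S_k).
\]
When $|S_i|=1$ the factor $N(S_i)$ is $\{\emptyset\}$, which contributes nothing to the join, consistent with the stated convention.

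The main point needing care is the identification of simplices of the link with pairwise compatible families. This needs the flag property, together with the fact that every cube of $\mathscr A_W$ at $1$ is spanned by its edges at $1$. Both follow from the cube-completion of a median graph being a CAT(0) cube complex, so this step should be routine.
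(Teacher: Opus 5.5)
Your proposal is correct and follows essentially the same route as the paper. You split off the bounded factor $[0,1]^k$, identify $\operatorname{Lk}_{\mathscr M_W}$ with the full subcomplex of $\operatorname{Lk}_{\mathscr A_W}$ spanned by $\mathcal I^\circ$, use Lemma~\ref{lem:squares} together with flagness to see that its simplices are the pairwise compatible families of tubes, and use orthogonality across components to get the join. The differences are cosmetic: you reduce to the vertex $1$ by homogeneity and justify fullness via convexity of the slice $\mathscr M_W\times\{0\}$, whereas the paper works at an arbitrary vertex and reads fullness off the join decomposition.
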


\begin{proof}
Choose a vertex $\epsilon\in ([0,1]^k)^{(0)}$ and set
\[
x:=(v,\epsilon)\in \mathscr M_W\times [0,1]^k\cong \mathscr A_W.
\]
Since links in a product of cube complexes are joins of links, we have
\[
\operatorname{Lk}_{\mathscr A_W}(x)
\cong
\operatorname{Lk}_{\mathscr M_W}(v)*
\operatorname{Lk}_{[0,1]^k}(\epsilon).
\]
The link $\operatorname{Lk}_{[0,1]^k}(\epsilon)$ is a $(k-1)$-simplex, whose vertices correspond to the $k$ factors $S_1,\ldots,S_k$ under \eqref{eq:AW-splits}.

We now analyze $\operatorname{Lk}_{\mathscr A_W}(x)$ in order to identify the remaining factor.
Since taking the cube-completion does not change the $1$-skeleton, the vertices of $\operatorname{Lk}_{\mathscr A_W}(x)$ are naturally indexed by $\mathcal I$.
Moreover, since $\mathscr A_W$ is $\text{CAT(0)}$, its links are flag.
Hence, for any finite collection $I_1, \dots, I_m \in \mathcal{I}$, the set $\{I_1,\ldots,I_m\}\subset\mathcal I$ spans a simplex in $\operatorname{Lk}_{\mathscr A_W}(x)$ if and only if each pair spans an edge.

Under the identification $\mathscr A_W\cong \mathscr M_W\times [0,1]^k$, the link $\operatorname{Lk}_{\mathscr M_W}(v)$ is the full subcomplex of $\operatorname{Lk}_{\mathscr A_W}(x)$ obtained by deleting the $k$ vertices corresponding to the connected components $S_1, \dots, S_k$.
Thus, the remaining vertices are indexed by $\mathcal I\setminus\{S_1,\ldots,S_k\}$.
For each $i$, the remaining indices contained in $S_i$ are precisely the proper nonempty connected subsets $I\subsetneq S_i$, that is, the tubes of the induced diagram on $S_i$.

Fix $i$. By Lemma~\ref{lem:squares}, two vertices indexed by proper connected subsets $I,J\subsetneq S_i$ are adjacent in the link if and only if $I$ and $J$ are orthogonal or nested. Since the link is flag, a finite collection of such vertices spans a simplex if and only if the corresponding tubes are pairwise compatible.
Therefore, the full subcomplex spanned by the vertices is the nested complex $N(S_i)$.

Finally, if $i\neq j$, then any $I\subset S_i$ and $J\subset S_j$ are orthogonal, so the corresponding vertices of $\operatorname{Lk}_{\mathscr M_W}(v)$ are adjacent.
Therefore, $\operatorname{Lk}_{\mathscr M_W}(v)\cong N(S_1)*\cdots *N(S_k)$, as desired.
\end{proof}

To understand the topology of the links from Lemma~\ref{lem:link-generalized-cactus}, we use the notion of graph associahedra in \cite[Definition~2.4]{Carr_Devadoss} and the following standard fact.

\begin{definition}[Graph associahedra]
\label{def:graph-associahedron}
Let $\Gamma$ be a finite connected graph with vertex set $V(\Gamma)$, where $|V(\Gamma)|=q$.
Let $\Delta_\Gamma$ be the $(q-1)$-simplex whose facets are labeled by the vertices of $\Gamma$. For a tube $T$ of $\Gamma$, let $F_T$ denote the face of $\Delta_\Gamma$ obtained as the intersection of the facets labeled by the vertices of $T$.
The \emph{graph associahedron} $P_\Gamma$ is obtained from $\Delta_\Gamma$ by successively truncating the faces $F_T$ corresponding to non-singleton tubes $T$, in increasing order of dimension.
\end{definition}

For an illustration of this construction, see \cite[Example~2.5]{Carr_Devadoss}.
In the definition of graph associahedra, note that the facets of $P_\Gamma$ are naturally indexed by the tubes of $\Gamma$: singleton tubes correspond to the original facets of $\Delta_\Gamma$, while non-singleton tubes correspond to the new facets created by the truncations.

\begin{lemma}
\label{lem:graph-associahedron-nested-complex}
Let $\Gamma$ be a finite connected graph with $q$ vertices.
Then the nested complex $N(\Gamma)$ is a simplicial sphere of dimension $q-2$.
\end{lemma}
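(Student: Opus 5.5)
The plan is to deduce the statement from the Carr--Devadoss theory of graph associahedra rather than to argue combinatorially from scratch. By \cite[Theorem~2.6]{Carr_Devadoss}, for a finite connected graph $\Gamma$ with $q$ vertices, the graph associahedron $P_\Gamma$ of Definition~\ref{def:graph-associahedron} is a simple convex polytope of dimension $q-1$. Its face poset is described as follows:
\begin{enumerate}[\rm (a)]
\item its facets are indexed by the tubes of $\Gamma$;
\item a collection of facets has nonempty common intersection if and only if the corresponding tubes are pairwise compatible, i.e.\ form a tubing;
\item each face is the intersection of the facets containing it.
\end{enumerate}
I will take this theorem as the main input, in the version for connected graphs, where proper tubes index facets and $V(\Gamma)$ itself is excluded.

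\textbf{Step 1: the boundary complex of the polar polytope.} Let $P_\Gamma^\ast$ be the polar dual of $P_\Gamma$. Since $P_\Gamma$ is simple, $P_\Gamma^\ast$ is simplicial. Its boundary complex $\partial P_\Gamma^\ast$ is a simplicial complex with the following structure:
\begin{itemize}
\item its vertices correspond to the facets of $P_\Gamma$;
\item a set of vertices spans a simplex exactly when the corresponding facets of $P_\Gamma$ have nonempty common intersection.
\end{itemize}
The boundary of a $(q-1)$-dimensional convex polytope is homeomorphic to $\mathbb S^{q-2}$, for instance by radial projection from an interior point. Hence $\partial P_\Gamma^\ast$ is a simplicial $(q-2)$-sphere.

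\textbf{Step 2: identification with $N(\Gamma)$.} Send each tube $T$ to the vertex of $\partial P_\Gamma^\ast$ dual to the facet labelled $T$. By (a) this is a bijection on vertices. By (b) a set of tubes is a tubing exactly when the corresponding facets meet. Facets of a simple polytope that pairwise intersect need not all intersect, so I will invoke (b) in its full form for arbitrary collections, not just pairs. With that, the simplices of $N(\Gamma)$ are exactly the simplices of $\partial P_\Gamma^\ast$, so $N(\Gamma)\cong\partial P_\Gamma^\ast\cong \mathbb S^{q-2}$.

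\textbf{Step 3: degenerate case.} For $q=1$ there are no tubes, so $N(\Gamma)=\{\emptyset\}=\mathbb S^{-1}$, consistent with the convention above. For $q=2$ there are two incompatible singleton tubes, so $N(\Gamma)=\mathbb S^0$, matching the formula directly.

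\textbf{Main obstacle.} The one delicate point is matching conventions with \cite{Carr_Devadoss}. Their Definition~2.4 and Theorem~2.6 must be checked to say that facets correspond to proper tubes and that faces correspond to tubings, using the same compatibility notion as Definition~\ref{def:tubes-compatible-nested}: nested, or disjoint and non-adjacent. If their statement is phrased only in terms of the face lattice, with faces in bijection with tubings and ordered by reverse inclusion, I will derive the dual simplicial complex from that. In that reading the vertices of the dual complex are the facets, and a set of facets spans a simplex iff it is contained in some tubing. Since every subset of a tubing is again a tubing, this is exactly the simplicial complex $N(\Gamma)$.

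As a fallback if the citation is unsatisfactory, I would use an inductive shelling/link argument modelled on Genevois's proof for paths \cite[Proposition~7.1]{Genevois}. The link of a tube $T$ in $N(\Gamma)$ is $N(\Gamma[T])*N(\Gamma/T)$, where $\Gamma/T$ is the reconnected complement. By induction this is a join of spheres of total dimension $q-3$. Combined with a Euler-characteristic or collapsibility argument for a vertex star, this would show $N(\Gamma)$ is a PL $(q-2)$-sphere. However, the polytope route is far cleaner, and it is the one I would pursue.
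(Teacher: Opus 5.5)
Your proposal is correct and follows essentially the same route as the paper. You invoke \cite[Theorem~2.6]{Carr_Devadoss} to get that $P_\Gamma$ is a simple $(q-1)$-polytope whose face poset is anti-isomorphic to the poset of tubings, derive from this (as in your ``main obstacle'' paragraph) that the facets $\mathcal F_{T_1},\dots,\mathcal F_{T_m}$ meet if and only if $\{T_1,\dots,T_m\}$ is a tubing, and identify $N(\Gamma)$ with $\partial P_\Gamma^\ast$; your check that two disjoint but adjacent tubes must be incompatible (for example the two singletons when $q=2$) is a convention the paper uses without comment.
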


\begin{proof}
The case $q=1$ is immediate from the convention $N(\Gamma)=\{\emptyset\}=\mathbb{S}^{-1}$.
Hence, assume that $q\geq 2$.
Let $P_\Gamma$ be the graph associahedron.

By \cite[Theorem~2.6]{Carr_Devadoss}, $P_\Gamma$ is a simple convex polytope.
Moreover, there is an isomorphism $\varphi$ between the tubings of
$\Gamma$, ordered by reverse inclusion, and the nonempty faces of
$P_\Gamma$, ordered by inclusion.
For each tube $T$, we denote by $\mathcal F_T$ the facet of $P_\Gamma$
corresponding to $T$, so that $\varphi(\{T\})=\mathcal F_T$.

Let $\mathcal U$ be a tubing.
For every $T\in\mathcal U$, we have $\varphi(\mathcal U) \subseteq \varphi(\{T\})=\mathcal F_T$.
Hence,
\[
\varphi(\mathcal U)
\subseteq
\bigcap_{T\in\mathcal U}\mathcal F_T.
\]
In particular, $\bigcap\limits_{T\in\mathcal U}\mathcal F_T$ is a nonempty face of $P_\Gamma$.
Let this face correspond under $\varphi$ to a tubing $\mathcal V$, that is, $\varphi(\mathcal V)=\bigcap\limits_{T\in\mathcal U}\mathcal F_T$.
Since $\varphi(\mathcal V) \subseteq \mathcal F_T=\varphi(\{T\})$ for every
$T\in\mathcal U$, it follows that $\mathcal V$ contains every
$T\in\mathcal U$. Hence, $\mathcal U \subseteq \mathcal V$.
By the inclusion-reversing property of $\varphi$, this gives
\[
\bigcap_{T\in\mathcal U}\mathcal F_T
\subseteq
\varphi(\mathcal U).
\]
Combining the two inclusions, we obtain
\[
\varphi(\mathcal U)
=
\bigcap_{T\in\mathcal U}\mathcal F_T,
\]
where, by convention, the empty intersection is $P_\Gamma$.

Now, let $T_1,\ldots,T_m$ be tubes of $\Gamma$.
If $\{T_1,\ldots,T_m\}$ is a tubing, then the equality above gives $\varphi(\{T_1,\ldots,T_m\})=\mathcal F_{T_1}\cap\cdots\cap\mathcal F_{T_m}$,
and hence $\mathcal F_{T_1}\cap\cdots\cap\mathcal F_{T_m}\neq\emptyset$.
Conversely, suppose that $\mathcal F_{T_1}\cap\cdots\cap\mathcal F_{T_m}\neq\emptyset$.
Then this intersection is a nonempty face of $P_\Gamma$, and hence
corresponds under $\varphi$ to a tubing of $\Gamma$.
Since this face is contained in
$\mathcal F_{T_i}=\varphi(\{T_i\})$ for each $i=1,\ldots,m$, the
corresponding tubing contains each $T_i$.
Therefore, $\{T_1,\ldots,T_m\}$ is a tubing.
Thus, $\mathcal F_{T_1}\cap\cdots\cap\mathcal F_{T_m}\neq\emptyset$
if and only if $\{T_1,\ldots,T_m\}$ is a tubing.

Recall that a \emph{dual polytope} $P^\ast$ of a convex polytope $P$
is a polytope together with a bijection between the faces of $P$ and
the faces of $P^\ast$ that reverses inclusion.
Let $P_\Gamma^\ast$ be a dual polytope of $P_\Gamma$.
Since $P_\Gamma$ is simple, $P_\Gamma^\ast$ is simplicial.
For each tube $T$, let $v_T$ be the vertex of $P_\Gamma^\ast$
corresponding to the facet $\mathcal F_T$ of $P_\Gamma$.
By the inclusion-reversing correspondence between the faces of
$P_\Gamma$ and $P_\Gamma^\ast$, together with the fact that
$P_\Gamma^\ast$ is simplicial, a collection of vertices
$\{v_{T_1},\ldots,v_{T_m}\}$ spans a simplex in
$\partial P_\Gamma^\ast$ if and only if $\mathcal F_{T_1}\cap\cdots\cap\mathcal F_{T_m}\neq\emptyset$.
Thus, $\{v_{T_1},\ldots,v_{T_m}\}$ spans a simplex in
$\partial P_\Gamma^\ast$ if and only if
$\{T_1,\ldots,T_m\}$ is a tubing.

Since the simplices of $N(\Gamma)$ are precisely the tubings of
$\Gamma$, the assignment $T\mapsto v_T$
induces a simplicial isomorphism $N(\Gamma)\cong\partial P_\Gamma^\ast$.
Since $\dim P_\Gamma^\ast=q-1$, its boundary complex is a simplicial
sphere of dimension $q-2$.
Therefore, $N(\Gamma)$ is a simplicial $(q-2)$-sphere.
\end{proof}

Combining Lemma~\ref{lem:link-generalized-cactus} and Lemma~\ref{lem:graph-associahedron-nested-complex}, we obtain the following.

\begin{lemma}\label{lem:manifold-factor-generalized-cactus}
The cube complex $\mathscr{M}_W$ is a $\text{CAT(0)}$ cubical manifold of dimension $n-k$.
\end{lemma}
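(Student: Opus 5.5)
The plan is to combine the splitting \eqref{eq:AW-splits} with the link computation of Lemma~\ref{lem:link-generalized-cactus} and the sphere statement of Lemma~\ref{lem:graph-associahedron-nested-complex}, and then invoke the standard criterion that a finite-dimensional cube complex whose vertex links are all PL spheres of a fixed dimension $d-1$ is a topological (indeed PL) $d$-manifold.

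\textbf{Step 1: $\mathscr M_W$ is CAT(0).} By \eqref{eq:AW-splits}, $\mathscr A_W\cong \mathscr M_W\times[0,1]^k$. Since $\mathscr A_W$ is CAT(0), $\mathscr M_W$ is CAT(0). One way to see this is that $\mathscr M_W$ is a retract of $\mathscr A_W$ (via a slice $\mathscr M_W\times\{\epsilon\}$), so it is simply connected. Moreover, each of its vertex links is a full subcomplex of a flag complex, namely the link in $\mathscr A_W$ with the $k$ cone-vertices removed, as already noted in the proof of Lemma~\ref{lem:link-generalized-cactus}. A full subcomplex of a flag complex is flag, so $\mathscr M_W$ is nonpositively curved. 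Alternatively, $Y$ is a convex subgraph (a fiber of the product of median graphs), hence median, and its cube-completion is CAT(0).

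\textbf{Step 2: identify the vertex links as spheres.} By Lemma~\ref{lem:link-generalized-cactus}, every vertex link is $N(S_1)*\cdots*N(S_k)$. By Lemma~\ref{lem:graph-associahedron-nested-complex}, each $N(S_i)$ is the boundary complex of a simplicial $(n_i-1)$-polytope, hence a PL sphere $\mathbb S^{n_i-2}$; the convention $\mathbb S^{-1}=\{\emptyset\}$ handles $n_i=1$. Joins of PL spheres are PL spheres, with $\mathbb S^a*\mathbb S^b\cong\mathbb S^{a+b+1}$. Therefore the link is a PL sphere of dimension
\[
\sum_{i=1}^k (n_i-2)+(k-1)=n-k-1.
\]
If $n=k$, the link is empty, and $\mathscr M_W$ is a single point, i.e.\ a $0$-manifold. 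This is consistent, because then $C_W^\circ$ is trivial.

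\textbf{Step 3: conclude the manifold structure.} Since each link is a simplicial $(n-k-1)$-sphere, every maximal cube has dimension $n-k$, so $\dim\mathscr M_W=n-k$. Next use the local structure of a cube complex: a neighborhood of a vertex is the cone on its link, and a neighborhood of an interior point of a cube $C$ is $C$ crossed with the cone on the link of $C$. The link of $C$ is the link of a simplex in a PL sphere, hence again a PL sphere of complementary dimension. So every point has a neighborhood homeomorphic to $\mathbb R^{n-k}$.

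\textbf{Main obstacle.} The delicate point is Step 3's use of links of higher cubes, which requires the vertex links to be PL (or combinatorial) spheres rather than merely homeomorphic to spheres. This is exactly why we want the polytopal realization from Lemma~\ref{lem:graph-associahedron-nested-complex}: boundary complexes of simplicial polytopes are combinatorial spheres, their simplex links are again combinatorial spheres, and joins preserve this property.
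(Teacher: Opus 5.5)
Your proposal is correct and follows the same route as the paper: identify each vertex link of $\mathscr M_W$ with $N(S_1)*\cdots*N(S_k)$, use the graph-associahedron lemma to see each $N(S_i)$ as a polytopal $(n_i-2)$-sphere, apply the join formula to get $\mathbb S^{n-k-1}$, and inherit CAT(0) from the product splitting $\mathscr A_W\cong\mathscr M_W\times[0,1]^k$. Your Steps 1 and 3 supply details the paper leaves implicit. These are simple connectivity and flagness of the factor, and the PL-sphere argument for links of higher-dimensional cubes, which is what justifies passing from spherical vertex links to a genuine $(n-k)$-manifold.
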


\begin{proof}
By Lemma~\ref{lem:link-generalized-cactus}, for every vertex $v$ of $\mathscr{M}_W$,
\[
\operatorname{Lk}_{\mathscr{M}_W}(v)
\cong
N(S_1)*\cdots *N(S_k).
\]
For each component $S_i$ with $n_i\ge 2$, Lemma~\ref{lem:graph-associahedron-nested-complex} gives $N(S_i)\cong \mathbb S^{n_i-2}$.
If $n_i=1$, then there are no proper tubes in $S_i$, and we use the convention $N(S_i)=\emptyset=\mathbb S^{-1}$.
Thus,
\[
\operatorname{Lk}_{\mathscr{M}_W}(v) \cong \mathbb S^{n_1-2}*\cdots *\mathbb S^{n_k-2}.
\]
Using the fact that $\mathbb S^a*\mathbb S^b\cong \mathbb S^{a+b+1}$ for $a, b \ge -1$, we obtain
\[
\operatorname{Lk}_{\mathscr{M}_W}(v) \cong \mathbb S^{\sum_{i=1}^k(n_i-2)+(k-1)}=\mathbb S^{n-k-1}.
\]
Therefore, every vertex link of $\mathscr{M}_W$ is a simplicial sphere of dimension $n-k-1$, so $\mathscr{M}_W$ is a cubical $(n-k)$-manifold.
Finally, since $\mathscr{M}_W$ is a cubical factor of the $\text{CAT(0)}$ cube complex $\mathscr{A}_W$, it is also $\text{CAT(0)}$.
\end{proof}

Now we are ready to compute the asymptotic dimension of generalized cactus groups associated with finite Coxeter systems.

\begin{theorem}\label{thm:asdim-generalized-cactus-finite}
Let $(W,S)$ be a finite Coxeter system with connected components $S_1,\dots,S_k$, and let $n=|S|$.
Then
\[
\operatorname{asdim}(C_W)=\operatorname{asdim}(PC_W)=n-k.
\]
\end{theorem}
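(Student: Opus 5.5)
The proof reduces to computing the asymptotic dimension of the cubical manifold $\mathscr M_W$ from Lemma~\ref{lem:manifold-factor-generalized-cactus}. First, since $W$ is finite, $PC_W$ has finite index in $C_W$. Asymptotic dimension is a quasi-isometry invariant, and a finite-index subgroup is quasi-isometric to the ambient group, so $\operatorname{asdim}(C_W)=\operatorname{asdim}(PC_W)$. Next, $C_W$ acts properly and cocompactly on $\mathscr C_W$: the action is free and transitive on vertices, and $\mathcal I$ is finite. Hence it acts geometrically on $\mathscr A_W$, and by the Švarc--Milnor lemma $C_W$ is quasi-isometric to $\mathscr A_W$. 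By the splitting \eqref{eq:AW-splits}, $\mathscr A_W\cong \mathscr M_W\times[0,1]^k$, and projecting onto $\mathscr M_W$ is a quasi-isometry. So it suffices to show $\operatorname{asdim}(\mathscr M_W)=n-k$, with either the $\ell^1$ metric on the $1$-skeleton or the CAT(0) metric, which are quasi-isometric because $\mathscr M_W$ is finite-dimensional.

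For the upper bound, $\mathscr M_W$ is a CAT(0) cube complex of dimension $n-k$, since its vertex links are $(n-k-1)$-spheres, so its maximal cubes have dimension $n-k$. The result of Wright \cite[Corollary~4.11]{Wright} (already used in Lemma~\ref{lem:rank_bounded}) bounds the asymptotic dimension of a finite-dimensional CAT(0) cube complex by its dimension. This gives $\operatorname{asdim}(\mathscr M_W)\le n-k$. If that corollary is stated only for groups, I would apply it to the geometric action of $C_W^\circ$ (or of $PC_W$) on $\mathscr M_W$ instead.

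For the lower bound, Lemma~\ref{lem:manifold-factor-generalized-cactus} says $\mathscr M_W$ is a CAT(0) cubical manifold of dimension $n-k$. Complete CAT(0) spaces are contractible and geodesics extend in a manifold without boundary, so $\mathscr M_W$ is a contractible $(n-k)$-manifold. Moreover it is uniformly contractible and of bounded geometry, because it admits a cocompact cubical action. Following \cite[Remark~7.6]{Genevois}, I would invoke the standard fact that a uniformly contractible $d$-manifold of bounded geometry has asymptotic dimension at least $d$. One route is coarse (co)homology: such a manifold has nonzero coarse cohomology in degree $d$, detected by its compactly supported cohomology $H^d_c\cong\mathbb Z$. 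Another route is the coarse Brouwer-type argument showing that $\mathbb R^d$-like spaces cannot have smaller asymptotic dimension.

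The main obstacle is this lower bound: justifying the passage from ``contractible cubical manifold with a cocompact action'' to ``$\operatorname{asdim}\ge n-k$'' with a precise citation. A useful alternative is to combine $\operatorname{asdim}(C_W)\ge \operatorname{cd}(PC_W)$, which holds for groups with finite-type classifying spaces, with $\operatorname{cd}(PC_W)=n-k$. Here $PC_W$ is torsion-free by Corollary~\ref{cor:pcw-torsionfree}. By Proposition~\ref{prop:pcw-trivial-cube-stab} it acts freely and cocompactly on $\mathscr A_W$, which is contractible and deformation retracts onto $\mathscr M_W\times\{\epsilon\}$. The quotient of this action is then a closed aspherical $(n-k)$-manifold (up to the bounded factor), and Poincaré duality gives $\operatorname{cd}(PC_W)=n-k$. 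I would try this cohomological-dimension route first, since every ingredient is already available in the paper.
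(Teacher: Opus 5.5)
Your main argument is correct and is essentially the paper's proof. You reduce to $\mathscr M_W$ via finite index and the quasi-isometries $C_W\sim\mathscr C_W\sim\mathscr A_W\sim\mathscr M_W$, bound $\operatorname{asdim}(\mathscr M_W)\le n-k$ by Wright (the paper cites his Theorem~4.10 rather than Corollary~4.11), and get $\operatorname{asdim}(\mathscr M_W)\ge n-k$ from the uniformly contractible manifold structure of Lemma~\ref{lem:manifold-factor-generalized-cactus}, for which the paper cites Gromov's 1.$F_1$.

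Your cohomological-dimension alternative, via Dranishnikov's inequality $\operatorname{cd}\le\operatorname{asdim}$ for groups with a finite $K(\pi,1)$, also works, but needs one correction. $PC_W$ need not act trivially on the $[0,1]^k$ factor (for example, $\sigma_{\{s_1\}}\sigma_{\{s_2\}}\sigma_{\{s_1\}}\sigma_S\in PC_W$ in type $A_2$), so retracting onto $\mathscr M_W\times\{\epsilon\}$ does not by itself give the manifold quotient. Instead, observe that $PC_W$ acts freely on $\mathscr M_W$: an element fixing $x$ would stabilize the cube $C\times[0,1]^k$ of $\mathscr A_W$, where $C$ is the smallest cube containing $x$. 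It also acts cocompactly, so $\mathscr M_W/PC_W$ is a closed aspherical $(n-k)$-manifold.
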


\begin{proof}
Since $W$ is finite, $PC_W$ has finite index in $C_W$.
Hence, $PC_W$ and $C_W$ are quasi-isometric. Since asymptotic dimension is invariant under quasi-isometry, it suffices to compute $\operatorname{asdim}(C_W)$.

Moreover, the group $C_W$ is quasi-isometric to its Cayley graph $\mathscr C_W$. Since $\mathscr C_W$ is the $1$-skeleton of the finite-dimensional cube complex $\mathscr A_W$, it is also quasi-isometric to $\mathscr A_W$. 
Since the factor $[0,1]^k$ is bounded, by~\eqref{eq:AW-splits}, it follows that $\mathscr A_W$ is quasi-isometric to $\mathscr{M}_W$.
Therefore, $\operatorname{asdim}(C_W)=\operatorname{asdim}(\mathscr M_W)$.

By Lemma~\ref{lem:manifold-factor-generalized-cactus}, $\mathscr M_W$ is a $(n-k)$-dimensional \text{CAT(0)} cube complex and a cubical $(n-k)$-manifold. 
By~\cite[1.$F_1$]{Gromov} and \cite[Theorem~4.10]{Wright}, $\operatorname{asdim}(\mathscr M_W)=n-k$.
\end{proof}

\begin{remark}\label{rem:asdim-typeA}
When $(W,S)$ is of type $A_{n-1}$, the Coxeter diagram is connected and $|S|=n-1$. Since $C_{A_{n-1}}\cong J_n$, Theorem~\ref{thm:asdim-generalized-cactus-finite} gives
\[
\operatorname{asdim}(J_n)=n-2,
\]
recovering Genevois's computation for ordinary cactus groups.
\end{remark}

As another direct consequence of Lemma~\ref{lem:manifold-factor-generalized-cactus}, one can obtain the virtual cohomological dimension, denoted by $\mathrm{vcd}$, of generalized cactus groups.

\begin{corollary}
With the notation in Theorem~\ref{thm:asdim-generalized-cactus-finite}, $\mathrm{vcd}(C_W)=n-k$.    
\end{corollary}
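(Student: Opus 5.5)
The plan is to deduce $\mathrm{vcd}(C_W)=n-k$ from Lemma~\ref{lem:manifold-factor-generalized-cactus} by computing the cohomological dimension of a torsion-free finite-index subgroup.

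\textbf{Choice of subgroup.} Since $W$ is finite, $PC_W$ has finite index in $C_W$. By Corollary~\ref{cor:pcw-torsionfree}, $PC_W$ is torsion-free. Hence $\mathrm{vcd}(C_W)=\mathrm{cd}(PC_W)$, and it remains to compute $\mathrm{cd}(PC_W)$.

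\textbf{Passing to $\mathscr M_W$.} By Proposition~\ref{prop:pcw-trivial-cube-stab}, $PC_W$ acts freely on $\mathscr A_W$. The action is also proper and cocompact, since $PC_W$ has finite index in $C_W$.

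I would then transfer the action to the factor $\mathscr M_W$. The subgroup $C_W^\circ$ acts on $Y$, and hence on $\mathscr M_W$. The semidirect product structure~\eqref{eq:cw-semidirect} shows that the splitting~\eqref{eq:AW-splits} is $C_W$-equivariant, with $C_W$ acting diagonally. The subgroup $PC_W\cap C_W^\circ$ has finite index in $PC_W$, and cd does not change under passing to finite-index subgroups of torsion-free groups (Serre's theorem). So it suffices to treat $G:=PC_W\cap C_W^\circ$.

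The group $G$ acts on $Y\times Q_k$ by preserving the $Q_k$ coordinate, so it acts on $\mathscr M_W\times\{\epsilon\}$. This copy of $\mathscr M_W$ is a convex subcomplex of $\mathscr A_W$, so the cube-stabilizers of $G$ on it are still trivial. Hence $G$ acts freely, properly and cocompactly on the contractible CAT(0) complex $\mathscr M_W$.

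\textbf{Computing the cohomological dimension.} The quotient $X=\mathscr M_W/G$ is then a closed aspherical $(n-k)$-manifold. Indeed, Lemma~\ref{lem:manifold-factor-generalized-cactus} says the links are $(n-k-1)$-spheres, so $\mathscr M_W$ is a topological $(n-k)$-manifold without boundary, and compactness of $X$ follows from cocompactness. Therefore $\mathrm{cd}(G)\le n-k$. Taking a further index-$2$ subgroup if needed to make $X$ orientable, we get $H^{n-k}(X;\mathbb Z)\cong\mathbb Z\neq0$, so $\mathrm{cd}(G)=n-k$.

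\textbf{Main obstacle.} The delicate point is the equivariance of the splitting and the claim that $\mathscr M_W$ is a genuine manifold without boundary. For the latter, I would note that the links are full spheres, not balls, so each point has a Euclidean neighborhood; this is the standard criterion for cube complexes. If the equivariance argument is troublesome, I would instead argue directly: $G$ acts freely and cocompactly on $\mathscr A_W\cong\mathscr M_W\times[0,1]^k$. Then $\mathscr A_W/G$ is a compact aspherical $n$-manifold with boundary, homotopy equivalent to a compact $(n-k)$-manifold, and Poincaré–Lefschetz duality again gives $\mathrm{cd}=n-k$.
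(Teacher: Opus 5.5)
Your proposal is correct and is essentially the argument the paper has in mind. The paper states the corollary only as a direct consequence of Lemma~\ref{lem:manifold-factor-generalized-cactus}: a torsion-free finite-index subgroup acts freely and cocompactly on the contractible cubical $(n-k)$-manifold $\mathscr M_W$, the quotient is a closed aspherical $(n-k)$-manifold, and so the cohomological dimension is $n-k$. Your detour through $PC_W\cap C_W^\circ$ is harmless but unnecessary, since $PC_W$ already acts properly on $\mathscr M_W$ through the projection and, being torsion-free, acts freely.
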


\subsection{Non-quasi-isometric to RAAGs}

In this subsection, we prove that if the Coxeter diagram of $(W,S)$ has a connected component of size at least $3$, the generalized cactus group $C_W$ cannot be quasi-isometric to any RAAG.

We first prove the following theorem and then combine it with Theorem~\ref{thm:maximal_rank} to prove Theorem~\ref{thm:mainRAAGQI}.

\begin{theorem}\label{thm:QI-RAAG-forces-Zd}
Let $(W,S)$ be a finite Coxeter system with connected components $S_1,\dots,S_k$ and let $n=|S|$.
Assume that $C_W$ is quasi-isometric to a RAAG.
Then $C_W$ contains a subgroup isomorphic to $\mathbb Z^{n-k}$.
\end{theorem}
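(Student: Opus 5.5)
Assume that $C_W$ is quasi-isometric to a RAAG $A_\Gamma$. The plan is to transport the dimension count through the quasi-isometry: first show that $A_\Gamma$ contains $\mathbb Z^{n-k}$, then obtain an $(n-k)$-dimensional flat in $\mathscr M_W$, and finally close that flat up to a free abelian subgroup of a finite-index subgroup of $C_W$ using Genevois's theorem~\cite[Theorem~4.6]{Genevois_2}.

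For the first step, recall that quasi-isometric groups have the same asymptotic dimension. By Theorem~\ref{thm:asdim-generalized-cactus-finite}, $\operatorname{asdim}(A_\Gamma)=\operatorname{asdim}(C_W)=n-k$. For a RAAG, the asymptotic dimension equals the dimension of its Salvetti complex, that is, the clique number $d$ of $\Gamma$; this is a standard fact, or it follows from $\operatorname{asdim}\le\dim$ of the universal cover together with $\mathbb Z^{d}\le A_\Gamma$. Hence $A_\Gamma$ contains $\mathbb Z^{n-k}$, generated by a maximal clique, so its universal cover contains an isometrically embedded flat $\mathbb R^{n-k}$. Pushing this flat through the quasi-isometry $A_\Gamma\to C_W\to \mathscr M_W$ (the last map uses the splitting \eqref{eq:AW-splits}) gives a quasi-flat of dimension $n-k$ in $\mathscr M_W$. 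Since $\mathscr M_W$ is a CAT(0) cube complex of dimension exactly $n-k$, top-dimensional quasi-flat rigidity for CAT(0) cube complexes (Huang's theorem: a top-rank quasi-flat lies at finite Hausdorff distance from a finite union of orthants) produces a genuine combinatorial $(n-k)$-dimensional orthant, and hence arbitrarily large $(n-k)$-cubical grids, in $\mathscr M_W$.

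Next, pass to the compact special cube complex $PC_W\backslash\mathscr A_W$. This complex is special because the action is conspicial (Theorem~\ref{thm:pcw-conspicial}) and has trivial cube stabilizers (Proposition~\ref{prop:pcw-trivial-cube-stab}), and $PC_W$ has finite index in $C_W$. Genevois's result~\cite[Theorem~4.6]{Genevois_2} turns large combinatorial flats (or orthants) of dimension $r$ in the universal cover of a compact special cube complex into a subgroup $\mathbb Z^r$ of its fundamental group. Applying it with $r=n-k$ yields $\mathbb Z^{n-k}\le PC_W\le C_W$.

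The main obstacle will be the middle step: promoting the quasi-flat to a genuine combinatorial flat or orthant with the precise hypotheses that~\cite[Theorem~4.6]{Genevois_2} requires. Here I would rely on top-dimensionality together with the manifold structure of Lemma~\ref{lem:manifold-factor-generalized-cactus}, since every vertex link of $\mathscr M_W$ is an $(n-k-1)$-sphere. If Genevois's theorem needs a full flat rather than an orthant, I would use cocompactness and a limiting argument on translates of larger and larger orthant pieces to extract a bi-infinite product of lines.
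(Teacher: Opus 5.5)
Your proposal is correct and follows the paper's proof essentially step for step: you match $\operatorname{asdim}$ with the clique number of $\Gamma$ to get $\mathbb Z^{n-k}\le A_\Gamma$, transport the resulting quasiflat into the $(n-k)$-dimensional factor $\mathscr M_W$, apply Huang's top-dimensional quasiflat theorem to obtain an orthant subcomplex, and then invoke Genevois's Theorem~4.6 for the compact special quotient of $\mathscr A_W$. The paper adds two details: an elementary Euclidean argument using local finiteness (not the manifold structure) showing that the orthant subcomplex carries the standard cubulation of $[0,\infty)^{n-k}$, and a passage to a finite cover $Y$ of $PC_W\backslash\mathscr A_W$ with two-sided hyperplanes before Genevois's theorem is applied.
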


\begin{proof}
Let $A_\Gamma$ be a RAAG quasi-isometric to $C_W$.
By Theorem~\ref{thm:asdim-generalized-cactus-finite}, we have $n-k=\operatorname{asdim}(C_W)=\operatorname{asdim}(A_\Gamma)$.
By~\cite[Theorem~1.2]{Tselekidis}, we have $\operatorname{asdim}(A_\Gamma)=\omega(\Gamma)$.
Here, $\omega(\Gamma)$ denotes the {\em clique number} of $\Gamma$, that is, the maximum number of vertices spanning a complete subgraph of $\Gamma$. 
Hence, $A_\Gamma$ contains an undistorted subgroup $\mathbb Z^{\omega(\Gamma)}\cong\mathbb Z^{n-k}$ of $A_\Gamma$.

Composing an isometric embedding $\mathbb Z^{n-k}\hookrightarrow A_\Gamma$ with a quasi-isometry $A_\Gamma\to C_W$ yields a quasi-isometric embedding $\phi:\mathbb Z^{n-k}\to C_W$.
Since $W$ is finite, $PC_W$ has finite index in $C_W$, hence the inclusion $PC_W\hookrightarrow C_W$ is a quasi-isometry.
Let $r : C_W \rightarrow PC_W$ be the quasi-inverse of the inclusion $PC_W\hookrightarrow C_W$. Then $f:=r \circ \phi : \mathbb Z^{n-k}\to PC_W$ is a quasi-isometric embedding.

Note that the left-multiplication action $C_W\curvearrowright \mathscr{C}_W$ is free and cocompact.
Since $PC_W\le C_W$ has finite index, the restricted action $PC_W\curvearrowright \mathscr{C}_W$ is also free and cocompact, and the induced action on the cube-completion $\mathscr{A}_W$ is cocompact.
Moreover, by Proposition~\ref{prop:pcw-trivial-cube-stab}, it has trivial cube stabilizers, and hence it is free.
Hence, the quotient
\[
X \ :=\ \mathscr{A}_W / PC_W
\]
is a compact nonpositively curved cube complex with $\pi_1(X)\cong PC_W$.

On the other hand, by Theorem~\ref{thm:pcw-conspicial}, the action $PC_W\curvearrowright \mathscr{C}_W$ is conspicial. 
Thus, the hyperplanes of $X$ are embedded and do not self-osculate
or inter-osculate.
Hence, $X$ is {\em special} in the sense of~\cite[Definition~3.2]{Haglund_Wise}.
Since $X$ is compact, it has finitely many hyperplanes.
Therefore, by \cite[Lemma~3.11]{Haglund_Wise}, there exists a finite cover $p:Y\to X$ such that every hyperplane of $Y$ is two-sided.
Since $X$ is special, it follows from \cite[Corollary~3.8]{Haglund_Wise} that the finite cover $Y$ is also special.
Hence, by \cite[Definition~3.2]{Haglund_Wise}, $Y$ is {\em $A$-special}, that is, every hyperplane is embedded and two-sided,
with no self-osculations or inter-osculations.
In particular, $Y$ is a compact special cube complex.

We also have $\pi_1(Y)\ \le\ \pi_1(X)\ \cong \ PC_W$ as a finite index subgroup.
Moreover, the universal cover $\widetilde{Y}$ of $Y$ identifies with that of $X$, so $\widetilde{Y} \cong \mathscr{A}_W$.
Fix $y_0\in \widetilde Y\cong \mathscr{A}_W$. Since $\pi_1(Y)$ acts geometrically on $\widetilde Y$, by \v{S}varc--Milnor lemma, the orbit map $\mathrm{orb}_{y_0}:\pi_1(Y)\rightarrow \widetilde Y$ defined by $\mathrm{orb}_{y_0}(g)=g\cdot y_0$ is a quasi-isometry.

Since $\pi_1(Y)$ has finite index in $PC_W$, there exists a quasi-isometry $g: PC_W\to \pi_1(Y)$ which is a quasi-inverse to the inclusion $\pi_1(Y)\hookrightarrow PC_W$.
Then the subset
\[
Q \ :=\ (\mathrm{orb}_{y_0}\circ g\circ f)(\mathbb{Z}^{n-k}) \ \subset\ \widetilde{Y} \cong \mathscr{A}_W
\]
is an $(n-k)$-quasiflat in $\widetilde{Y}$.
Here, a $d$-\emph{quasiflat} in a metric space is the image of a quasi-isometric embedding of $\mathbb Z^d$.
By \eqref{eq:AW-splits} and Lemma~\ref{lem:manifold-factor-generalized-cactus}, we have a decomposition $\mathscr{A}_W \ \cong\ \mathscr{M}_W\times [0,1]^k$,
where $\mathscr{M}_W$ is a $(n-k)$-dimensional $\text{CAT(0)}$ cube complex.
Since $[0,1]^k$ is bounded, the projection $\pi:\mathscr{M}_W\times [0,1]^k\to\mathscr{M}_W$ is a quasi-isometry.
Hence, $\pi(Q)\subset\mathscr{M}_W$ is again a $(n-k)$-quasiflat.

Since $\mathscr{M}_W$ is a $(n-k)$-dimensional
$\mathrm{CAT}(0)$ cube complex and $\pi(Q)$ is a $(n-k)$-quasiflat in
$\mathscr M_W$, \cite[Theorem~1.1]{Huang} implies that $\pi(Q)$ lies at
finite Hausdorff distance from a finite union of $(n-k)$-dimensional orthant
subcomplexes of $\mathscr M_W$.
Recall that, in Huang's terminology, an {\em orthant subcomplex} is a convex cubical subcomplex which is isometric to the Euclidean orthant $[0,\infty)^{n-k}$ with respect to the $\text{CAT(0)}$ metric.

Choose one of these orthant subcomplexes and denote it by $O\subset \mathscr M_W$.

\medskip 
\noindent {\bf Claim.} {\em The cubical structure on $O$ is the standard cubulation of $[0,\infty)^{n-k}$.}

\begin{proof}[Proof of Claim]
Identify $O$ isometrically with $[0,\infty)^{n-k}$.
Under this isometry, each cube of $O$ is mapped to a Euclidean unit cube in $[0,\infty)^{n-k}$.
Since $O$ is a cube complex,
these cubes cover $[0,\infty)^{n-k}$, and the intersection of any two of them is either empty or a common face.

We first show that the origin of $[0,\infty)^{n-k}$ is a vertex of the induced cubulation.
Otherwise, it would lie in the relative interior of a cube of positive dimension, so there would exist distinct points $a,b\in[0,\infty)^{n-k}$ with $0=(a+b)/2$.
Since all coordinates of $a$ and $b$ are nonnegative, this forces $a=b=0$, a contradiction.

Next, we show that an $(n-k)$-dimensional cube containing the origin is $[0,1]^{n-k}$.
To show this, we first note that the induced cubulation of $[0,\infty)^{n-k}$ is locally finite.
Since $Y$ is compact, its universal cover $\widetilde Y$ is
locally finite. Since $\widetilde Y\cong \mathscr M_W\times [0,1]^k$, the cubical subcomplex $\mathscr M_W$ is also locally finite. Hence, as
$O$ is a cubical subcomplex of $\mathscr M_W$, the induced cubulation
of $[0,\infty)^{n-k}$ is locally finite.

By local finiteness, a sufficiently small neighborhood of the origin is covered by finitely many cubes containing it.
Since this neighborhood is $(n-k)$-dimensional, at least one of these cubes must have dimension $n-k$.
Such an $(n-k)$-dimensional cube must be $[0,1]^{n-k}$.
To see this, let $u_1,\ldots,u_{n-k}$ be the unit edge vectors of this cube at the origin.
The vectors $u_i$ are pairwise orthogonal because the cube is Euclidean,
while, since the cube lies in $[0,\infty)^{n-k}$, all coordinates of the
$u_i$ are nonnegative.
Hence, no two of them can have nonzero entries in the same coordinate.
As there are $n-k$ unit vectors in $\mathbb R^{n-k}$, they are, up to
permutation, precisely the coordinate unit vectors.

Finally, we show that every standard unit cube occurs. Suppose that
\[
C_{\mathbf m}=\prod_{i=1}^{n-k}[m_i,m_i+1],
\qquad \mathbf m\in\mathbb Z_{\geq0}^{\,n-k},
\]
is the image of a cube of $O$ under the above isometry.
For each $i$, the relative interior of its face $x_i=m_i+1$ lies in the interior of $[0,\infty)^{n-k}$.
Hence, another $(n-k)$-cube meets $C_{\mathbf m}$ along this face on the opposite side, that is, in the direction $x_i>m_i+1$.
The unique Euclidean unit cube sharing this face with $C_{\mathbf m}$ on the side $x_i>m_i+1$ is $C_{\mathbf m+e_i}$, where $e_i$ is the $i$th coordinate unit vector.
Starting from $C_{\mathbf0}=[0,1]^{n-k}$, induction therefore shows that every standard unit cube occurs.
Since these cubes cover $[0,\infty)^{n-k}$, the induced cubulation is the standard one, proving the claim.
\end{proof}

By Claim, fix a cubical isomorphism
\[
\iota:[0,\infty)^{n-k}\xrightarrow{\cong} O
\subset \mathscr M_W.
\]
Choose a vertex $v\in ([0,1]^k)^{(0)}$ and consider the cubical embedding
\[
j_v:\mathscr M_W\hookrightarrow \mathscr M_W\times [0,1]^k,\qquad
x\mapsto (x,v).
\]
Both $\iota$ and $j_v$ are cubical isomorphisms onto their images. Hence, their composition
\[
j_v\circ\iota:[0,\infty)^{n-k}\hookrightarrow \widetilde Y
\]
is also a cubical isomorphism onto its image, and therefore a combinatorial embedding.
Thus, the image of $j_v\circ\iota$ is an $(n-k)$-dimensional combinatorial flat in $\widetilde Y$, in the sense used in the proof of \cite[Theorem~4.6]{Genevois_2}.
Since $Y$ is a compact special cube complex, that theorem implies
that $\pi_1(Y)$ contains a subgroup isomorphic to $\mathbb Z^{n-k}$. Consequently, as $\pi_1(Y)$ has finite index in $PC_W\le C_W$, it follows that $C_W$ contains $\mathbb Z^{n-k}$.
\end{proof}

We are now ready to prove Theorem~\ref{thm:mainRAAGQI}.

\begin{proof}[Proof of Theorem~\ref{thm:mainRAAGQI}]
Suppose that the generalized cactus group $C_W$ is quasi-isometric to a RAAG.
Let $n=|S|$ and let $S_1,\ldots,S_k$ be connected components of the Coxeter diagram of $(W,S)$.
By Theorem~\ref{thm:QI-RAAG-forces-Zd}, $C_W$ contains a free abelian subgroup $\mathbb{Z}^{n-k}$.
However, by Theorem~\ref{thm:maximal_rank}, we have $n-k\leq \sum_{i=1}^k \lfloor n_i/2 \rfloor$, where $n_i=|S_i|$ for each $i$.
Since $n-k=\sum_{i=1}^k(n_i-1)$ and $\lfloor n_i/2 \rfloor \le n_i-1$ with equality if and only if $n_i\le 2$, the above inequality can hold only if $n_i\le 2$ for every $i$.
Since some connected component $S_i$ has at least three vertices, we have $\lfloor n_i/2 \rfloor < n_i-1$ for at least one $i$. Hence, $\sum_{i=1}^k \lfloor n_i/2 \rfloor<n-k$, contradicting the inequality above.
Therefore, $C_W$ is not quasi-isometric to any RAAG.    
\end{proof}

Davis, Januszkiewicz, and Scott~\cite[Question~8.5.11]{Davis_Januszkiewicz_Scott_2} asked when the mock reflection groups associated with different associahedral tilings are quasi-isometric. Since generalized cactus groups arise as special cases of their mock reflection groups, this suggests the more general problem of classifying generalized cactus groups up to quasi-isometry.

For the quasi-isometry classification among generalized cactus groups, our computation of asymptotic dimension provides a partial distinction. However, asymptotic dimension does not distinguish generalized cactus groups associated with irreducible finite Coxeter groups of the same rank. Thus, we conclude this paper with the following question.

\begin{question}
Let $(W,S)$ and $(W',S')$ be irreducible finite Coxeter systems of the same rank. When are the generalized cactus groups $C_W$ and $C_{W'}$ quasi-isometric?
\end{question}

\section*{AI disclosure}

The authors used ChatGPT (OpenAI) for language editing and improvements to the presentation of the manuscript.
All AI-assisted content was reviewed, edited, and verified by the authors, who take full responsibility for the integrity of the work.

\medskip
\bibliographystyle{abbrv} 
\bibliography{reference_cactus}		
\end{document}